\newcommand{\cR}{\mathcal{R}}
\DeclareSymbolFont{symbols2}{LS1}{stixfrak} {m} {n}
\DeclareMathSymbol{\operp}{\mathbin}{symbols2}{"A8}
\title{A~priori and a~posteriori error analysis of the Crouzeix-Raviart and Morley FEM with original and modified right-hand sides}
\author{Carsten Carstensen\footnote{Department of Mathematics, 
Humboldt-Universit\"{a}t zu Berlin, 10099 Berlin, Germany.
Distinguished Visiting Professor, Department of Mathematics, Indian institute of 
Technology Bombay, Powai, Mumbai-400076, India.  cc@math.hu-berlin.de}        
\quad\text{and}\quad 
Neela Nataraj\footnote{Department of Mathematics, Indian Institute of Technology Bombay, Powai, Mumbai 400076, India. neela@math.iitb.ac.in}
}
\tikzstyle{every picture}+=[font=\footnotesize]
\setlist{noitemsep, topsep=0.8ex, partopsep=0pt
	, leftmargin=3em}
\setlist[1]{labelindent=\parindent}
\newlist{axioms}{enumerate}{1}
\setlist[axioms]{font=\bfseries}
\newlist{alphenum}{enumerate}{1}
\setlist[alphenum]{label=\textbf{(\alph*)}, leftmargin=4em}
\newlist{alphienum}{enumerate}{1}
\setlist[alphienum]{label=\textit{(\alph*)}}
\newlist{romanenum}{enumerate}{1}
\setlist[romanenum]{label=\textit{(\roman*)}}
\newlist{romaninenum}{enumerate*}{1}
\setlist[romaninenum]{label=\textit{(\roman*)}}
\crefname{equation}{\unskip}{\unskip}
\newtheorem{thm}{Theorem}[section]
\newtheorem{lem}[thm]{Lemma}
\newtheorem{prop}[thm]{Proposition}
\theoremstyle{definition}
\newtheorem{defn}{Definition}[section]
\newtheorem{rem}{Remark}[section]
\newtheorem{example}{\bf Example}[section]
\numberwithin{equation}{section}
\newcommand{\cE}{\mathcal E}
\newcommand{\cT}{\mathcal T}
\newcommand{\pw}{\mathrm{pw}}
\newcommand{\nc}{\mathrm{nc}}
\newcommand{\jc}{\mathrm{J}}
\newcommand{\osc}{\mathrm{osc}}
\newcommand{\qo}{\mathrm{qo}}
\newcommand{\NC}{\text{pw}}
\newcommand{\C}{\mathrm{C}}
\newcommand{\trinl}{\ensuremath{|\!|\!|}}
\newcommand{\trinr}{\ensuremath{|\!|\!|}}
\newcommand{\dx}{{\rm\,dx}}
\newcommand{\dy}{{\rm\,dy}}
\newcommand{\dz}{{\rm\,dz}}
\newcommand{\ds}{{\rm\,ds}}
\newcommand{\dw}{{\rm\,dw}}
\newcommand{\dt}{{\rm\,dt}}
\newcommand{\dr}{{\rm\,dr}}
\DeclareMathOperator{\E}{\mathcal{E}}
\newcommand{\M}{\mathrm{M}}
\newcommand{\CR}{\mathrm{CR}}
\newcommand{\T}{\mathcal{T}}
\newcommand{\V}{\mathcal{V}}
   \newcounter{const}
\NewDocumentCommand{\constant}{o}
 {
  \IfValueTF{#1}%
  {C_{#1}}%
  {\refstepcounter{const}%
  C_{\theconst}}%
 }
\pgfplotsset{compat=1.5}
\begin{document}	
\maketitle

\begin{abstract}
This article on  nonconforming schemes for $m$ harmonic problems 
simultaneously treats the Crouzeix-Raviart ($m=1$) and the 
Morley finite elements  ($m=2$) for the original and for modified right-hand side $F$ 
in the dual space $V^*:=H^{-m}(\Omega)$ to the energy space $V:=H^{m}_0(\Omega)$.
The smoother $J:V_\nc\to V$ in this paper is a companion operator, that is a 
linear and bounded right-inverse  to the nonconforming interpolation operator $I_\nc:V\to V_\nc$,  and modifies the discrete right-hand side $F_h:=F\circ J \in V_\nc^*$. The 
best-approximation property of the modified scheme from Veeser et~al. (2018)
is recovered and complemented with an analysis of the  convergence rates 
in weaker Sobolev norms. Examples with oscillating data 
show that the original method may fail to 
enjoy the best-approximation property but can also be better than the 
modified scheme. The  a~posteriori analysis of this paper concerns data oscillations 
of various types in a class of right-hand sides $F\in V^*$. The reliable error estimates 
involve explicit constants and can be recommended for explicit error control of the piecewise energy norm. The efficiency follows solely up to data oscillations and examples 
illustrate  this can be problematic. 
\end{abstract}

\noindent {\bf Keywords:} nonconforming schemes, Crouzeix-Raviart, Morley, biharmonic problem, medius error analysis, best-approximation, a~priori and a~posteriori analysis

\noindent {\bf AMS Classification:}  65N30, 65N12, 65N50

\section{Introduction}
The beginning of the nonconforming finite element methodology, its motivating examples in fluid mechanics, and its 
connections with mixed formulations are outlined in \cite{Brenner2015} together with the relevant literature that (partly) led to the 
Strang lemmas (also named after Strang-Fix and Strang-Berger etc.) in  
finite element textbooks. 
This article aims at a simple a~priori and a~posteriori error analysis for $m$ harmonic problems, that involve 
the energy space $V:= H^m_0(\Omega)$ endowed with the energy scalar product $a: V\times V\to\mathbb{R}$. The 
emphasis is on the lowest-order  nonconforming schemes with  the piecewise polynomial,  
nonconforming discrete space $V_\nc\subset P_m(\T)$, 
that simultaneously describes  the Crouzeix-Raviart (CR) for $m=1$ and Morley finite element methods (FEM) for $m=2$.
Those nonconforming FEMs allow for an interpolation operator $I_\nc: V\to V_\nc$ with the best-approximation property
and interpolation error estimates with explicit (and small) constants  (called $\kappa_m$ throughout) from \cite{CarstensenGedickeRim,CarstensenGedicke2014} for 
$m=1$ and \cite{ccdg_2014} for $m=2$.
The word Strang lemma is no longer used in this paper  and a {\em companion operator}  $J:V_\nc\to V$ 
circumvents the medius analysis from \cite{Gudi10}.

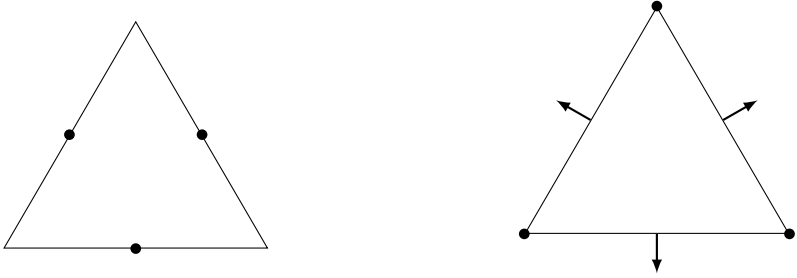
\begin{figure*}[h]
\begin{minipage}[h]{0.45\linewidth}
\begin{center}
\begin{tikzpicture}
\node[regular polygon, regular polygon sides=3, draw, minimum size=4cm]
(m) at (0,0) {};


\put(-3,-5){$T$}



\fill [black] (m.side 1) circle (2pt);
\fill [black] (m.side 2) circle (2pt);
   \fill [black] (m.side 3) circle (2pt);
\end{tikzpicture}
\end{center}
\end{minipage}
\begin{minipage}[h]{0.45\linewidth}
\begin{center}
\begin{tikzpicture}
\node[regular polygon, regular polygon sides=3, draw, minimum size=4cm]
(m) at (0,0) {};

\fill [black] (m.corner 1) circle (2pt);

\put(-3,-5){$T$}

\fill [black] (m.corner 2) circle (2pt);
\fill [black] (m.corner 3) circle (2pt);

\draw [-latex, thick] (m.side 1) -- ($(m.side 1)!0.3!90:(m.corner 1)$);
\draw [-latex, thick] (m.side 2) -- ($(m.side 2)!0.3!90:(m.corner 2)$);
\draw [-latex, thick] (m.side 3) -- ($(m.side 3)!0.3!90:(m.corner 3)$);
\end{tikzpicture}
\end{center}
\end{minipage}
\caption{Crouzeix-Raviart FEM (left) and Morley FEM (right)}\label{fig.ncfem}
\end{figure*}

A  {\it (conforming) companion operator} $J:V_\nc\to V$  is defined in this paper as a right-inverse 
of the interpolation operator $I_\nc: V\to V_\nc$ with 
additional benefits such as the $L^2$ orthogonality $ v_\nc-J v_\nc \perp P_m(\T)$  for all $v_\nc\in V_\nc$. 
The extra $L^2$ orthogonality is {\em not } needed for the sole  best-approximation \cite{veeser_zanotti1,veeser_zanotti2, veeser_zanotti3}, but it
enables a duality argument for error estimates in weaker Sobolev norms  \cite{ccdgmsMathComp}. 

\medskip 
The paper departs with a revisit of  the medius analysis \cite{Gudi10}, the best-approximation results from  \cite{veeser_zanotti1,veeser_zanotti2, veeser_zanotti3} and recovers the same 
best-approximation constant $C_{\rm qo}$ for a modified right-hand side $F _h$ in Section~\ref{section:best_approximation}.~It thereafter contributes a comprehensive  analysis of convergence rates in  Sobolev  
norms weaker and stronger than the energy norm in $V$ in Section~\ref{sec:lower_order}.

\medskip
The precise convergence rate  $\min\{ 2\sigma, m+\sigma-s\}$ is guaranteed  for quasi-uniform meshes  for the error $u-Ju_\nc$ of the post-processed approximation $J u_\nc$ in the norm of $H^{s}(\Omega)$  for $-m\le s < m+{1/2}$ with the positive index  $\sigma\le 1$ in terms of elliptic regularity. This holds under the generic assumption that the convergence rate in the energy norm is exactly $\sigma<m$ and so excludes untypical smooth solutions.  The only former result is the seminal contribution \cite{hujunshi2012} with a proof that the convergence rate in the $L^2$ error in nonconforming schemes is no better than $2\sigma$ not only for one model scenario but for many other nonconforming schemes.

\medskip

The paper will focus on shape-regular triangulations of the planar bounded polygonal Lipschitz domain 
$\Omega\subset {\mathbb R}^2$ into triangles for simplicity. The abstract analysis 
holds verbatim for any space dimensions once the companion operator  $J$  is designed. This started in 2D  for $m=1$ 
with \cite{ccdgmsMathComp} for the  Crouzeix-Raviart finite element method  (CRFEM) and is almost immediately generalized to any space dimension.
For $m=2$ and the  Morley FEM  it started in the PhD thesis \cite{Gallistl2014} and details can be found in 
\cite{DG_Morley_Eigen,veeser_zanotti2} in 2D, while the companion operator 
$J$ for generalization of the Morley FEM  from \cite{MingXu}   is designed  in \cite{CCP_new} in 3D. 
The most refined analysis in 2D with explicit constants and the connection to a~posteriori error estimates are summarized in  \cite{CH17,CCP}.
The paper will focus on lowest-order schemes because, first, the interpolation operators are less clear for 
higher-order CRFEM in 2D and 3D \cite{Sauter2018}, hence, second, conforming companions are unknown. In fact, there are two aspects that make the
nonconforming FEM attractive in comparison to conforming FEM: The interpolation operator is local (simplex-oriented) and defined for any function in  the energy space.  
Those advantages are possible for simplicial meshes in any space dimension, but {\em not} available for all higher polynomial degrees. 
 
\medskip

The modified nonconforming FEM specifies the discrete right-hand side 
 $F_h:= F\circ J\in V_\nc^*$ with the companion operator $J:V_\nc\to V$ as a smoother. Thereby the modified version
allows for general data $F\in V^*$. Modifications of the right-hand side $F_h$ of this type had been introduced  in \cite{ArnoldBrezzi85,BS05}, before 
 \cite{veeser_zanotti1,veeser_zanotti2, veeser_zanotti3} systematically studied the resulting best-approximation property and established the name smoother.
The modified FEM is a Petrov-Galerkin scheme discussed in Section~\ref{section:best_approximation} and hence quasi-optimally convergent (once stable).  
%
If the source term $F \in V^*\setminus  L^2(\Omega) $ is {\em not} a Lebesgue function, the application of the original discrete problem is less obvious.  
This paper studies a class of right-hand sides $F$ in $V^*$ that includes point  forces for $m=2$ in the model example to define $F_h$ (without any smoother) adapted to the data. 
This provokes a comparison of the nonconforming FEM with the two different discrete right-hand sides with advantages for both sides. 
The model example is rich enough to allow for a {\em counterexample for the  best-approximation} of the original method. 
The latter holds up to the data oscillations only;  cf. \cite{CPS2012} for $m=1$ and  $F\in L^2(\Omega)$.    
Conversely, another example illustrates that the modified nonconforming method (with the discrete right-hand side $F\circ J$) can be {\em worse} than the original discretization.   

\medskip

The a~posteriori error control for the (original) nonconforming FEM and $F\in L^2(\Omega)$ started with \cite{DEDRPCVV1996}   and continued with 
\cite{CCSBSJ2002,Carstensen2009,CCHu2007,CCHJOA2007,VR2012,CCEMHRHWLC2012,CCDGJH14} for $m=1$ and 
 \cite{HuShi09,HuShiXu2012, CCDGHU13,  VeigaNiiranenStenberg07}
for $m=2$. The initial work was based on a 
Helmholtz decomposition of the piecewise energy norm of the nonconforming error. The Helmholtz decompositions become Hodge decompositions
for multiply-connected domains and are not all known for 3D. Their discrete counterparts  from  \cite{mindlin} for $m=1$
and \cite{CCDGJH14} for $m=2$ are not known for all domains and space dimensions. That motivates the use of other techniques and  
this paper suggests a paradigm shift: 
Utilize the companion operator $J$ for a split in the conforming error $u- J u_\nc$ and in the nonconforming  error $u_\nc - J u_\nc$. 
This approach leads to a~posteriori error estimates even if a Helmholtz decomposition is unknown and without a piecewise integration by parts.
As a consequence,  the error estimators do {\em not} contain weighted jumps of the normal components of  derivatives of the discrete solution $u_\nc$ across interior edges. 
On the other hand, the efficiency  requires some other additional benefit of the companion operator, which actually motivated its first design in a~posteriori error control. 
%
Rate-optimal  adaptive  nonconforming FEM are analyzed in  \cite{CCP_new,ccdgmsMathComp,BeckerMaoShi2010,CCDGJH14,Carstensen2019,rabus2010}  and the references therein.  Amongst all second-order schemes, the Morley FEM appears to be the most 
 simple and method of choice for fourth-order problems \cite{CCGMNN_Semilinear}.
 Hence the generalization of the  methodology in this article for $m \ge 3$ with nonconforming finite elements from \cite{WMXJ2013} appears appealing.

\medskip
\noindent
{\bf Outline of the structure.}
{The remaining parts of this paper are organised as follows. In Section \ref{section:quasiorthogonality} nonconforming schemes, interpolation and companion operators, and examples are presented. The best-approximation results available in literature are reviewed and an abstract best-approximation result is established for data  $F \in H^{-m}(\Omega)$ in Section~\ref{section:best_approximation}. Section~\ref{sec:lower_order}  presents lower-order error estimates and  precise convergence rates 
 under further hypotheses. 
 Section~\ref{sec:comments} discusses  examples of data in $V^*$, an alternate choice of the right-hand side functional that leads  to a modified scheme and a comparison of the original and the modified scheme. 
 This is followed by  an a~posteriori analysis in Section \ref{sec:aposteriori}.

\medskip \noindent
{\bf General notation.}  Standard notation of  Lebesgue and Sobolev spaces, 
their norms, and $L^2$ scalar products  applies throughout the paper
such as the abbreviation $\|\bullet\|$ for $\|\bullet\|_{L^2(\Omega)}$.
For a  shape-regular triangulation $\T$ of $\Omega$ into triangles, the product space  $\prod_{T\in\T} H^s(T)$  denoted as $H^s(\T)$ is defined as $ \{ v_\pw\in L^2(\Omega): \forall T\in\T, \;   v_\pw|_T\in H^s(T)\}$ and is equipped with the Euclid norm of those contributions $ \|\bullet  \|_{H^s(T)}$ for all $T\in\T$. The triple norm  $\trinl \bullet \trinr:=|\bullet|_{H^{m}(\Omega)}$ is the energy norm
 and  $\trinl \bullet \trinr_{\text{pw}}:=|\bullet|_{H^{m}(\cT)}:=\| D^m_\text{pw}\bullet\|$ 
 is its piecewise version with the piecewise partial derivatives  $D_\text{pw}^m$ of order $m\in  {\mathbb N}$. 
For non-integer real ${\boldmath{t}}=\ell +s$ with $\ell \in {\mathbb N}_0$ and $0 < s <1$,  the 2D Sobolev-Slobodeckii semi-norm  \cite{Grisvard}
of $f \in H^t(\Omega)$ reads
\begin{align} \label{eq:sobslobo}
|f|_{H^{\boldmath{t}}(\Omega)} := \left( 
\sum_{|\beta|=\ell} \int_\Omega  \int_\Omega 
\frac{|\partial^\beta f(x) - \partial^\beta f(y)|^2 }{|x-y|^{2 +2 s} } \dx \dy   \right)^{1/2}.
\end{align}
The vector space of piecewise polynomials of at most degree $k \in {\mathbb N}$ is
denoted by $P_k(\T)$, $S^k(\T ) := P_k(\T ) \cap C^0({\Omega})  \subset H^1(\Omega)$, and 
$S^k_0(\T ) := P_k(\T ) \cap C^0({\Omega})  \subset H^1_0(\Omega)$ includes the homogeneous boundary conditions.
For a Lebesgue-measurable set $\omega \subseteq {\mathbb R}^n$  and a Lebesgue-measurable function~$v: \omega \rightarrow X$~with values in a finite-dimensional real Banach space $X$, the integral with respect to
the $d$-dimensional Lebesgue measure is denoted by 
$\fint_\omega v \dx := 1/ |\omega| \int_\omega v \dx$. The integral over a $(n-1)$-dimensional hypersurface $\Gamma$ with respect to the
 $(n-1)$-dimensional Hausdorff measure reads as $\int_{\Gamma} v \ds.$
%
 The notation $A \lesssim B$ abbreviates $A \leq CB$ for some positive generic constant $C$, 
which depends on the  solution $u$;
 $A\approx B$ abbreviates $A\lesssim B \lesssim A$.  The set of all $2 \times 2$ real symmetric matrices is denoted by ${\mathbb S}$. The symmetric part of $A \in {\mathbb R}^{2 \times 2}$ is denoted by $\text{sym } (A)$. That is, $\text{sym }(A)= 1/2(A + A^T)$. 
 For any differentiable scalar function $v$  resp. vector field $\Phi=(\phi_1,\phi_2)^T$, 
 $$ {\rm Curl}\, v=\begin{pmatrix}
-\frac{\partial v}{\partial x_2}\\
\frac{\partial v}{\partial x_1}
\end{pmatrix}\quad\text{resp.}
\quad 
 {\rm Curl} \: \Phi:=  {\rm Curl} \begin{pmatrix}
\phi_1\\
\phi_2
\end{pmatrix}
=\begin{pmatrix}
-\frac{\partial \phi_1}{\partial x_2} &\frac{\partial \phi_1}{\partial x_1}\\
-\frac{\partial \phi_2}{\partial x_2} &\frac{\partial \phi_2}{\partial x_1}
\end{pmatrix}.
$$ The symbol $D^1v:=\nabla v$ (resp. $\nabla_\pw v$) denotes the gradient (resp. piecewise gradient) of a scalar function $v$ and $\boldsymbol \sigma:=D^2 u$ 
(resp. $D^2_\pw u $) denotes the Hessian (piecewise Hessian) of  $v$ with the four entries $\sigma_{11}$, $\sigma_{12}$, $\sigma_{21}$, and $\sigma_{22}$ in the form of a $2 \times 2$ matrix.

\section{Nonconforming schemes}\label{section:quasiorthogonality}
\subsection{Continuous model problem} \label{subsection:modelproblem}
Suppose $u \in V:= H^m_0(\Omega)$ solves the $m$-harmonic equation $(-1)^m \Delta^m u=F$ for a given right-hand side $F \in V^*\equiv H^{-m}(\Omega)$
in a  planar bounded Lipschitz domain $\Omega\subset\mathbb{R}^2 $ with polygonal boundary $\partial \Omega$, 
$m \in {\mathbb N}$. The   weak form of this equation reads 
\begin{align} \label{eq:weakabstract}
a(u,v)= F(v) \quad\text{for all }v\in V
\end{align}
with $
a(v,w):= \int_\Omega D^m v:D^m w \dx $ for all $v,w \in V$. The examples in this paper are merely for $m=1,2$, but the abstract framework can be written 
for any $m \in {\mathbb N}$. 
It is well known that \eqref{eq:weakabstract} has a unique solution $u$ and elliptic regularity  \cite{agmon2010,BlumRannacher,gilbargtrudinger2001,necas67} holds in the sense that 
$F \in H^{-s}(\Omega)$ implies $u \in H^{2m-s} (\Omega)$ 
for all $m-\sigma \le s \le m$ with the positive index of elliptic regularity $\sigma$. 
The latter is well established for $m=1,2$ and an overall hypothesis in this paper establishes for other $m$.
 The lowest-order nonconforming finite element schemes suggest linear convergence rates in the energy norm only $\trinl \bullet \trinr:= a(\bullet,\bullet)^{1/2} \approx \|\bullet \|_{V}$ and so  it is sufficient to assume  throughout this paper that the 
quantity $\sigma \le 1$ is the minimum of the index of elliptic regularity and one, whence $0<\sigma\le 1$ depends exclusively on $\Omega$ and $m$. 
The  index of elliptic regularity $\sigma=1$ follows for  a convex polygonal domain $\Omega$ while  $1/2 <\sigma <1$ characterizes a 
nonconvex polygonal domain $\Omega$ \cite{agmon2010,BlumRannacher,gilbargtrudinger2001,necas67}.

\subsection{Nonconforming discretisation}\label{subsection:ncfem}
Suppose that $V_\nc \subset P_m(\T)$ is a nonconforming FE space based on a shape-regular triangulation $\T$ of $\Omega$ into triangles. Let the semi-scalar product $a_\pw$ be defined by the piecewise differential operator  $D^m_\pw$  (the gradient for $m=1$ and the Hessian for $m=2$) 
\[
a_\pw(v_\pw,w_\pw):= \sum_{T \in \T}  \int_T D^m v_\pw : D^m w_\pw \dx \quad\text{for all }v_\pw, w_\pw \in H^m(\T),
\] 
that induces a piecewise $H^m$ seminorm 
$\trinl \bullet \trinr_\pw= a_\pw(\bullet,\bullet)^{1/2}$ that is also a norm in $V_\nc$.  Then $(V_\nc, a_\pw)$ is a (finite-dimensional) Hilbert space so that, given any $F_h \in V_\nc^*$, there exists a unique discrete solution $u_\nc \in V_\nc$ to 
\begin{align} \label{eq:discrete}
a_\pw(u_\nc,v_\nc)= F_h(v_\nc) \quad\text{for all }v_\nc \in V_\nc.
\end{align}

The nonconforming FEM is accompanied with an interpolation operator $I_\nc: V \rightarrow V_\nc$ and a best-approximation property in $P_m(\T)$, i.e.,
\begin{align} \label{eq:best_approx}
a_{\pw}(v-I_\nc v, w_m) & =0 \quad\text{for all } v \in V \text{ and all } w_m \in P_m(\T).
\end{align}
Suppose that there exists a universal constant $\kappa_m >0$ with 
\begin{align} \label{eq:interpolation_estimate}
\|h_\T^{-m} (v-I_\nc v)\| & \le \kappa_m \trinl v- I_\nc v\trinr_\pw \quad\text{for all } v \in V 
\end{align}
for the $L^2$ norm $\|\bullet \|:= \|\bullet \|_{L^2(\Omega)}$ and the piecewise $H^m$ seminorm 
$\trinl \bullet \trinr_\pw$ 
 for the piecewise constant mesh-size $h_\T \in P_0(\T)$ with $h_\T|_T := h_T= \text{ diam }(T) $. 

\begin{rem}[Pythagoras]  The semi-inner product $a_\pw$ in $H^m(\T)
:= \{ v_\pw\in L^2(\Omega): \forall T\in\T, \;   v_\pw|_T\in H^m(T)\}$ 
allows for the concept of  orthogonality:
Two piecewise Sobolev functions  $v_\pw,w_\pw \in H^m(\T)$ are orthogonal if \(a_\pw( v_\pw,w_\pw)=0\) holds and then the Pythagoras theorem 
\[
 \trinl v_\pw+w_\pw  \trinr_\pw^2 =  \trinl v_\pw  \trinr_\pw^2+ \trinl w_\pw \trinr_\pw^2
\]
follows. 
 An example is the orthogonality of $v-I_\nc v$ and  $ w_\nc$ from \eqref{eq:best_approx} that guarantees 
in particular 
\begin{align} \label{eq:Pythogoras}
 \trinl v-w_\nc  \trinr_\pw^2 =  \trinl v -I_\nc v \trinr_\pw^2+ \trinl w_\nc- I_\nc v \trinr_\pw^2
\end{align}
for all $v\in  V $ and $w_\nc\in V_\nc$.
\end{rem}

\begin{rem}[$V+ V_\nc$ is a Hilbert space with inner product $a_\pw$] 
Recall the assumption that $\trinl \bullet \trinr_\pw$ is a norm in $V_\nc$ and that 
some $I_\nc: V \rightarrow V_\nc$  satisfies \eqref{eq:best_approx}-\eqref{eq:interpolation_estimate}. Then $\trinl \bullet \trinr_\pw$ is a norm in $V+ V_\nc$.
Since $\trinl \bullet \trinr_\pw$ is always a seminorm, the definiteness has to be clarified:  Suppose that  $\trinl v+v_\nc \trinr_\pw =0$
 for  some $v \in V$ and $v_\nc \in V_\nc$. This and $w_\nc:=-v_\nc $ imply in \eqref{eq:Pythogoras} that
 $ \trinl v-I_\nc v \trinr_\pw=0=\trinl v_\nc+I_\nc v \trinr_\pw$. Since   $\trinl \bullet \trinr_\pw$ is a norm in $V_\nc$, it follows $v_\nc =-I_\nc v$.
 This shows in  \eqref{eq:interpolation_estimate} that  $v+v_\nc =0$ a.e. as claimed. 
 
 Consequently, 
 $V+ V_\nc$ is a pre-Hilbert space with inner product $a_\pw$ with a complete subspace $V$ and a finite-dimensional subspace $V_\nc$. 
 The completeness of  $V+ V_\nc$  follows from this by standard arguments with the closed subspace 
 $W:=V\cap V_\nc$  and its orthogonal complement $W^\perp$ in $V+ V_\nc$: Let $Q$  and 
 $R$ denote the orthogonal projection onto $W$ and $V\cap W^\perp$, respectively. Given any $v+v_\nc\in W^\perp$
 for $v\in V$ and $v_\nc\in V_\nc$,   the consequence   $v+v_\nc= (1-Q)v+(1-Q)v_\nc$ leads to $W^\perp= (V\cap W^\perp)  \oplus (V_\nc\cap W^\perp)$. 
 The orthogonal split  $v+v_\nc= (1-Q)v+ R(1-Q)v_\nc 
+ (1-R)(1-Q)v_\nc$ leads to  $W^\perp= (V\cap W^\perp) \operp (1-R)(V_\nc\cap W^\perp)$. The space $V+ V_\nc$ is 
therefore the orthogonal sum of the complete subspaces $W$, 
$V\cap W^\perp$, and $(1-R)(V_\nc\cap W^\perp)$ and so complete. 
\end{rem}

\begin{rem}[uniqueness and extension of $I_\nc: V+V_\nc \rightarrow V_\nc$]
Since $(V+ V_\nc, a_\pw)$ is a Hilbert space,  \eqref{eq:best_approx} shows that 
$I_\nc: V \rightarrow V_\nc$  maps any $v$ to its  (unique) best-approximation $I_\nc v$ in $V_\nc$. In other words, the operator $I_\nc$ in 
\eqref{eq:best_approx}-\eqref{eq:interpolation_estimate} is unique and equal to the restriction to $V$ of the best-approximation   $I_\nc: V+V_\nc \rightarrow V_\nc$
that is the orthogonal projection onto $V_\nc$ in the Hilbert space  $(V+ V_\nc, a_\pw)$. Throughout this paper, the symbol  $I_\nc$ denotes this orthogonal projection 
in the extended space   $I_\nc: V+V_\nc \rightarrow V_\nc$ as well as its restriction to arguments in $V$. Notice that  $I_\nc=\text{ id }$ in $V_\nc$ and 
$I_\nc(v+w_\nc) = I_\nc v + w_\nc$ for all $v \in V$ 
and $w_\nc \in V_\nc$.
\end{rem}

\subsection{Examples for $m=1$ and $m=2$}
For $m=1$ (resp. $m=2$), the nonconforming discretization models the Crouzeix-Raviart (resp. Morley) FEM.
Besides the best approximation property \eqref{eq:best_approx}, these examples allow for the $L^2$ orthogonality
\begin{align}\label{eq:L2ortho}
D^m(v-I_\nc v) \perp P_0(\T; {\mathbb R}^{2m}).
\end{align}

\subsubsection{Crouzeix-Raviart FEM for the harmonic equation}\label{sec:cr}
{
The set of vertices $\V$ (resp. edges $\E$) in the shape-regular  triangulation $\T$ of the bounded  polygonal Lipschitz domain  $\Omega$ into triangles is divided into the interior
vertices $\V(\Omega)$ (resp. edges $\E(\Omega)$) and the vertices $\V(\partial\Omega)$
 (resp. edges $\E(\partial\Omega)$) on the boundary $\partial\Omega$. 
Let  $\Pi_k$ denote  the $L^2$ projection onto the space of piecewise polynomials 
${P}_k(\T)$ of total degree at most $k\in {\mathbb N}_0$.   
The mesh-size $h_\cT\in P_0(\cT)$ is defined by
$h_\cT|_T:=h_T:= |T|^{1/2} \approx \text{\rm diam}(T)$ in any triangle 
$T\in\cT$ of area $|T|$;  $|E|$ denotes the length of an edge $E\in\E$.

The CR finite element space  $V_\nc:=\text{CR}^1_0(\T)$ for $m=1$ reads \cite{CrouzeixRaviart}
\begin{eqnarray*}
&{\text{CR}}^1(\T)&:=\big{\{} v_\CR \in P_1(\T) \; {{\big  |}}
\; v_\CR \text{ is continuous at all midpoints of interior edges of } \T  
\big{\}}, \\
&{\text{CR}}_0^1(\T)&:=\big{\{} v_\CR \in \text{ CR}^1(\T) \; {{\big |}}
\; v_\CR \text{ vanishes at all midpoints of boundary edges of } \T  \big{\}}.
\end{eqnarray*}
The CR interpolation  operator $I_\nc:=I_\CR: V \rightarrow  \CR^1_0(\T)$ is defined by 
\[(I_\CR v) (\text{mid }(E)) = \fint_E v \ds \text{ for all } E \in  {\E} \; \text{ for all } v \in V.   \]
It satisfies  \eqref{eq:best_approx}-\eqref{eq:Pythogoras} and \eqref{eq:L2ortho} with 
$\kappa_1 = \sqrt{{j_{1,1}}^{-2}+1/48}$ for the first positive root $j_{1,1}$ of the Bessel function of the first kind \cite{CarstensenGedicke2014,ccdg_2014}, 
that does not even depend on the shape of the triangles. 

\noindent 
For a simply-connected domain $\Omega$,  the  {\em  discrete Helmholtz decomposition} \cite{mindlin} shows
\[P_0(\T;{\mathbb R}^2) =\nabla_\pw \CR^1_0(\T) \operp {\rm Curl } \: (S^1(\T) /{\mathbb R})\]
and, as $h \rightarrow 0$, leads to the Helmholtz decomposition 
\[L^2(\Omega; {\mathbb R}^2) = \nabla H^1_0(\Omega) \operp {\rm Curl }\: (H^1(\Omega)/{\mathbb R}). \]

\subsubsection{Morley FEM for the biharmonic equation} \label{sec:Morley}
The  nonconforming {Morley} finite element space $V_\nc=\M(\T)$ for $m=2$ reads \cite{Morley68}
\begin{eqnarray*}
{\M}'(\T)&:=&\Bigg{\{} v_\M\in P_2(\T){{\Bigg |}}
\begin{aligned}
&\; v_\M \text{ is continuous at the vertices and its normal derivatives } \\
& \nu_E\cdot \nabla_{\NC}{ v_\M} \text{ are continuous at the midpoints of interior edges}
\end{aligned}\Bigg{\}}, 
\\
{\M}(\T)&:=&\Bigg{\{} v_\M\in M'(\T){{\Bigg |}}
\begin{aligned}
&\; v_\M \text{  vanishes at the vertices of }\partial \Omega \text{ and its normal derivatives} \\
&\; \nu_E\cdot \nabla { v_\M} \text{  vanish at the midpoints of boundary edges}
\end{aligned}\Bigg{\}}.
\end{eqnarray*}
For any $v\in V$, the Morley interpolation operator
$I_\nc v:=I_\M v\in \M(\cT)$  is defined by the degrees of freedom 
$$
(I_\M v)(z)=v(z) \text{ for any } z\in \V(\Omega) \text{ and } 
\fint_E\frac{\partial I_\M v}{\partial \nu_E}\ds=\fint_E\frac{\partial v}{\partial \nu_E}\ds \text{ for any } E\in \cE.
$$
The Morley interpolation operator $I_\M : V \rightarrow \M(\T)$ 
satisfies \eqref{eq:best_approx}-\eqref{eq:Pythogoras}  and \eqref{eq:L2ortho} \cite{HuShi09,CCDGJH14}
with  $\kappa_2= 0.25745784465$   \cite{ccdg_2014}   that does not even depend on the shape of the triangles. 

\noindent
For a simply-connected domain $\Omega$, the {\em discrete Helmholtz decomposition} \cite{CCDGJH14} reads 
\[P_0(\T;{\mathbb S}) =D^2_\pw \M(\T) \operp {\rm sym \: Curl } \: (S^1(\T)^2 /{\mathbb R}^2)\]
and, as $h \rightarrow 0$, leads to  the Helmholtz decomposition 
\[L^2(\Omega; {\mathbb R}^2) = D^2 H^2_0(\Omega) \operp {\rm sym \: Curl }\: (H^1(\Omega;{\mathbb R}^2)/{\mathbb R}^2). \]

\subsection{Companion operator}\label{sec:companion}
The interpolation operator $I_\nc : V \rightarrow V_\nc$ allows for a right-inverse $J$ with benefits \cite{CCP}. 
Recall the conforming finite element subspace $V_{\rm c}$ of $V$ and suppose that a linear operator  $J: V_\nc \rightarrow V $ 
and a constant $\Lambda_\jc$ exist with 
\begin{align} 
& I_\nc J = \text{id in } V_\nc, \text{ i.e.,  } I_\nc J v_\nc= v_\nc ,   \label{eq:right_inverse}\\
& \trinl v_\nc- J v_\nc \trinr_\pw \le \Lambda_\jc \trinl v_\nc- v \trinr_\pw, \label{eq:companion_estimate}\\
& v_\nc- J v_\nc \perp P_m(\T) \quad \text{ in } L^2(\Omega) \label{eq:companion_orthogonality}
\end{align}
for all $v_\nc \in V_\nc $  and for all  $v \in V$.

The  companion operator $J$  in this paper (a) allows for an elegant proof (in Subsection~\ref{subsectionAlternativeproof}), 
(b) provides an appropriate right-hand side $F_h:= F(J \bullet) \in V^*_\nc$ in \eqref{eq:weakabstract} for general $F \in V^*$ with
best-approximation (in Subsection~\ref{subsectionBest-approximationforthemodifiedscheme}), and 
(c) controls the inconsistency term $\min_{v\in V}  \trinl  u_\nc -v \trinr_\pw$
in the a posteriori error control (in Section~\ref{sec:aposteriori}).

\begin{example}[companion operators]
 For any $v_\CR \in CR^1_0(\T)$, there exists 
   $J v_\CR \in (S^1_0(\T )+P_4(\T)) \cap V  $  \cite{ccdgmsMathComp} that satisfies 
 \eqref{eq:right_inverse}-\eqref{eq:companion_orthogonality}. 
For any $v_\M \in \M(\T)$, there exists a  $J v_\M \in (HCT(\T )+P_8(\T)) \cap V $ 
  (with the HCT finite element space $HCT(\T) $)
  that  satisfies \eqref{eq:right_inverse}-\eqref{eq:companion_orthogonality} \cite{CCP,Gallistl2014,DG_Morley_Eigen}. 
 \end{example}
 
 \begin{rem}[extra orthogonality in \eqref{eq:companion_orthogonality}] 
The  $L^2$ orthogonality  in \eqref{eq:companion_orthogonality} allows a direct proof of Theorem~\ref{thm:sec2.energynorm0} 
that circumvents the a posteriori error analysis of the consistency 
term as part of the medius analysis \cite{Gudi10}.  
It also allows control over dual norm estimates of the form $\| v_\M- J v_\M \|_{H^{-s}(\Omega)} \lesssim \| h_\T^s ( v_\M- J v_\M )\|$ for $0\le s \le 2$. 
This is critical~e.g.~in eigenvalue analysis or for problems with low-order terms \cite{CCP, Carstensen2019}.
The proof of the best-approximation of Theorem~\ref{thm:energy_norm} for the modified scheme \eqref{eq:modified_discrete}, however, does not require 
the $L^2$ orthogonality  in \eqref{eq:companion_orthogonality}.
\end{rem}

\section{Best-Approximation}\label{section:best_approximation}
The first subsection is a reformulation of the medius analysis in the seminal work \cite{Gudi10}. 

\subsection{First error analysis for $F\in L^2(\Omega)$}\label{subsectionFirsterroranalysisfor} 
Suppose $F\in L^2(\Omega)$ and let $u$ (resp. $u_\nc$)  denote the exact (resp. discrete) solution to \eqref{eq:weakabstract}  (resp. \eqref{eq:discrete}).
The analysis is first illustrated for data $F \equiv f \in L^2(\Omega)$ such that 
$F_h(v_\nc)=F(v_\nc) := \int_{\Omega} f v_\nc \dx$ for all $v_\nc \in V_\nc$ in \eqref{eq:discrete}.

\medskip \noindent
The error $e:=u-u_\nc \in V+ V_\nc$ and its interpolation  $e_\nc:=I_\nc u - u_\nc \in V_\nc$ are naively split into 
\begin{align} \label{eq:split}
\trinl e  \trinr^2_\pw & = a(u, u - J u_\nc) - a_\pw(u_\nc,e_\nc) + a_\pw(u-I_\nc u, J u_\nc - u_\nc).
\end{align}
(The identity  \eqref{eq:split}   follows from elementary algebra in the Hilbert space $V+ V_\nc$ and 
$a_\pw(u_\nc, u-I_\nc u) =0 = a_\pw(I_\nc u, J u_\nc- u_\nc)$ from \eqref{eq:best_approx} and \eqref{eq:right_inverse}.) 
The first and second term on the right-hand side of \eqref{eq:split} allow for \eqref{eq:weakabstract} and \eqref{eq:discrete}, respectively, and their sum is equal to 
$F(u-J u_\nc - e_\nc)$    (recall $F_h \equiv F$). This and the last term in \eqref{eq:split} allow Cauchy inequalities. 
This, \eqref{eq:right_inverse}, and   \eqref{eq:companion_estimate} lead in \eqref{eq:split} to
\begin{align}\label{eq:substitute}
\trinl e \trinr^2_\pw & \le \|h_\T^m f\| \|h_\T^{-m} (1- I_\nc)(u- J u_\nc)\| + \Lambda_\jc
\trinl u - I_\nc u \trinr_\pw  \trinl e \trinr_\pw. 
\end{align}
The interpolation estimate in \eqref{eq:interpolation_estimate} is followed by a triangle inequality, \eqref{eq:companion_estimate}  
with $v=u \in V$ and \eqref{eq:Pythogoras} to derive 
\[
\|h_\T^{-m} (1- I_\nc)(u- J u_\nc)\| \le \kappa_m \trinl u - I_\nc u\trinr_\pw +\kappa_m \Lambda_\jc 
\trinl e \trinr_\pw \le \kappa_m (1+ \Lambda_\jc) \trinl e \trinr_\pw.
\]
The combination with \eqref{eq:substitute} shows best-approximation up to a data approximation term 
\begin{align}\label{eq:gudi}
\trinl u -  u_\nc \trinr_\pw  \le \max\{  \Lambda_\jc, \kappa_m (1+ \Lambda_\jc) \} \left( \trinl u - I_\nc u\trinr_\pw + \|h_\T^m f\|
 \right).
\end{align}
Some local efficiency is enfolded to verify  the {a~posteriori} error estimate $\|h_\T^m f\| \lesssim \osc_m(f,\T)
+  \| u - I_\nc u\|_{H^m(\T)}$ in the examples of \cite{Gudi10}. This is a standard argument in residual-based a posteriori error control and links \eqref{eq:gudi} to the alternate version in \eqref{eq:improved}. 

\subsection{Alternative proof 
for $F\in L^2(\Omega)$}\label{subsectionAlternativeproof} 
The medius analysis in \cite{Gudi10}  does {\it not} employ \eqref{eq:companion_orthogonality} to derive the slightly improved version
\begin{align} \label{eq:improved}
\trinl u -  u_\nc \trinr_\pw  \le 
\constant{} 
\left( \trinl u - I_\nc u\trinr_\pw + \osc_m(f,\T)
 \right)
\end{align}
in terms of the data-oscillations 
\begin{align} \label{eq:osc}
\osc_m(f,\T):= \|h_\T^m (f-\Pi_m f)\|
\end{align}
for $f \in L^2(\Omega)$ and its $L^2$ projection $\Pi_m f$ onto $P_m(\T)$. 
The beneficial property \eqref{eq:companion_orthogonality} allows for a direct proof and this is detailed next.

\noindent 
The key observation is that the split \eqref{eq:split} leads to $F((1-I_\nc)(u-J u_\nc))$ and the contribution 
 $F(u-I_\nc u)$ (that stems from $a(u,u) - a_\pw(u_\nc, I_\nc u)$ by \eqref{eq:weakabstract}-\eqref{eq:discrete}) 
  does {\it not } allow for \eqref{eq:companion_orthogonality}. 
The alternative split
\begin{equation}\label{eq:split_alternative}
\trinl e \trinr^2_\pw  =  a(u, J e_\nc) -a_\pw(u_\nc, e_\nc)  + a_\pw(u, e-J e_\nc)
\end{equation}
(this follows with the arguments in \eqref{eq:split}) does allow for \eqref{eq:companion_orthogonality}. 
Because of \eqref{eq:weakabstract}-\eqref{eq:discrete}, the sum of the first two terms on the right-hand side of \eqref{eq:split_alternative} is equal to 
\[
 F((J-1)e_\nc) = \int_\Omega (f- \Pi_m f)  ((J-1)e_\nc) \dx  \le \osc_m(f,\T) \| h_\T^{-m} (1-J) e_\nc\|
\]
with \eqref{eq:companion_orthogonality}
followed by  a weighted Cauchy inequality in $L^2(\Omega)$ in the end. 
Since $(1-J) e_\nc = (1-I_\nc) (1-J)e_\nc$ from \eqref{eq:right_inverse}, \eqref{eq:interpolation_estimate} 
implies
\begin{align} \label{eq:oscterm}
 \kappa_m^{-1} \| h_\T^{-m} (1-J) e_\nc\| \le \trinl  (1-J) e_\nc \trinr_\pw    \le \Lambda_\jc \trinl e_\nc \trinr_\pw \le \Lambda_\jc \trinl  e \trinr_\pw
\end{align}
with $v=0$ in  \eqref{eq:companion_estimate} for the second last and the Pythagoras theorem (e.g. \eqref{eq:Pythogoras}  with $v=u$ and $w_\nc=u_\nc$) 
 in the last step.

The last term of  \eqref{eq:split_alternative} is recast with $I_\nc(e-Je_\nc )=0$ from \eqref{eq:right_inverse} and
$a_\pw(I_\nc u, e-Je_\nc) =0$ from \eqref{eq:best_approx} as
\begin{align}\label{eqccsec2lastidentityinproofthm:sec2.energynorm0}
 a_\pw(u-I_\nc u, e-Je_\nc) &=  \trinl u-I_\nc u \trinr^2_\pw + a_\pw(u-I_\nc u, (1- J)e_\nc) \nonumber  \\
 & \le  (1+\Lambda_\jc) \trinl u-I_\nc u \trinr_{\pw} \trinl e \trinr_{\pw} 
\end{align}
with a Cauchy inequality and $\trinl  (1-J) e_\nc \trinr_\pw \le \Lambda_\jc \trinl  e \trinr_\pw$ from \eqref{eq:oscterm}  and \eqref{eq:Pythogoras} in the last step. 
The summary of the inequalities  in  \eqref{eq:split_alternative} concludes the  proof of Theorem~\ref{thm:sec2.energynorm0} (with suboptimal constants
as outlined in Remark~\ref{remarkccendofsection2nolast} below).

\begin{thm}[best-approximation up to data approximation] \label{thm:sec2.energynorm0}
The assumptions \eqref{eq:weakabstract}-\eqref{eq:companion_orthogonality} imply \eqref{eq:improved} for $\constant [1]:=\max\{ \kappa_m \Lambda_\jc, 1+ \Lambda_\jc\}.$
\end{thm}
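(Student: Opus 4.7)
The plan is to assemble the pieces already laid out in Subsection~\ref{subsectionAlternativeproof} into a single estimate. I start from the alternative split \eqref{eq:split_alternative}, namely
\[
\trinl e\trinr_\pw^2 = a(u, Je_\nc) - a_\pw(u_\nc, e_\nc) + a_\pw(u, e - Je_\nc),
\]
which I verify via the two orthogonalities $a_\pw(u_\nc, u - I_\nc u) = 0$ (from \eqref{eq:best_approx}) and $a_\pw(I_\nc u, e - Je_\nc) = 0$ (which uses $I_\nc J = \mathrm{id}$ from \eqref{eq:right_inverse} together with \eqref{eq:best_approx}). The first two summands are then rewritten using the continuous and discrete problems \eqref{eq:weakabstract} and \eqref{eq:discrete} as $F(Je_\nc) - F(e_\nc) = F((J-1)e_\nc)$.

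The key step is to exploit the extra orthogonality \eqref{eq:companion_orthogonality}: since $(J-1)e_\nc \perp P_m(\T)$ in $L^2(\Omega)$ and $\Pi_m f \in P_m(\T)$, the term $F((J-1)e_\nc)$ equals $\int_\Omega (f - \Pi_m f)(J-1)e_\nc\,dx$. A weighted Cauchy inequality gives
\[
F((J-1)e_\nc) \le \osc_m(f,\T)\,\|h_\T^{-m}(1-J)e_\nc\|.
\]
To pass this weighted $L^2$-norm to the energy norm, I use $(1-J)e_\nc = (1-I_\nc)(1-J)e_\nc$ (again by \eqref{eq:right_inverse}), apply the interpolation estimate \eqref{eq:interpolation_estimate}, and then the companion estimate \eqref{eq:companion_estimate} with $v=0$ to obtain $\|h_\T^{-m}(1-J)e_\nc\| \le \kappa_m \Lambda_\jc \trinl e_\nc \trinr_\pw \le \kappa_m \Lambda_\jc \trinl e \trinr_\pw$, where the last inequality is Pythagoras \eqref{eq:Pythogoras}.

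For the third summand $a_\pw(u, e - Je_\nc)$, I split $u = I_\nc u + (u - I_\nc u)$ and note that $I_\nc(e - Je_\nc) = 0$ by \eqref{eq:right_inverse}, so the $I_\nc u$ contribution vanishes by \eqref{eq:best_approx}. Writing $e - Je_\nc = (u - I_\nc u) + (1-J)e_\nc$ isolates a square $\trinl u - I_\nc u\trinr_\pw^2$ plus a cross term estimated by Cauchy and the bound $\trinl(1-J)e_\nc\trinr_\pw \le \Lambda_\jc \trinl e\trinr_\pw$, yielding the clean inequality $(1+\Lambda_\jc)\trinl u - I_\nc u\trinr_\pw \trinl e\trinr_\pw$.

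Finally, I insert both bounds into \eqref{eq:split_alternative}, divide by $\trinl e\trinr_\pw$ (trivial if $\trinl e\trinr_\pw = 0$), and read off the constant $C_1 = \max\{\kappa_m \Lambda_\jc,\, 1+\Lambda_\jc\}$. The main obstacle is not conceptual but bookkeeping: ensuring the orthogonality \eqref{eq:companion_orthogonality} is invoked exactly where it produces $f - \Pi_m f$ rather than $f$, which is what converts the naive bound \eqref{eq:gudi} (with $\|h_\T^m f\|$) into the sharper \eqref{eq:improved} (with $\osc_m(f,\T)$).
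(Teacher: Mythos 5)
Your proof is correct and follows exactly the paper's own argument in Subsection~\ref{subsectionAlternativeproof}: same alternative split \eqref{eq:split_alternative}, same use of the extra orthogonality \eqref{eq:companion_orthogonality} to produce $f-\Pi_m f$, same chain through \eqref{eq:oscterm} and the estimate \eqref{eqccsec2lastidentityinproofthm:sec2.energynorm0} for the third term, and the same final division and max. No gaps and no genuinely new route.
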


\subsection{Best-approximation for the modified scheme}\label{subsectionBest-approximationforthemodifiedscheme}
The  discrete scheme \eqref{eq:discrete} requires a discrete right-hand side $F_h$  for a general $F \in H^{-m}(\Omega)$. 
The evaluation of $F_h:= F \circ J$ is feasible with $F_h(v_\nc):= F(J v_\nc)$ for all $v_\nc \in V_\nc$ and  
the {\it (modified) nonconforming scheme} seeks the solution $u_\nc \in V_\nc$ to
\begin{equation}\label{eq:modified_discrete}
a_\pw(u_\nc, v_\nc)= F(J v_\nc) \quad\text{for all }v_\nc \in V_\nc.
\end{equation}
%
Let $\Lambda_0$ denote the norm of $1-J$ of the right-inverse $J\in L(V_\nc;V)$ regarded as a linear map between $V_\nc$ and $V$,
\begin{equation}\label{eq:lambda0}
\Lambda_0:=\sup_{v_\nc\in V_\nc\setminus\{0\}} \trinl v_\nc -J v_\nc  \trinr_{\pw} /\trinl v_\nc \trinr_{\pw} \le \Lambda_\jc.
\end{equation}

\begin{thm}[best-approximation] \label{thm:energy_norm}
\noindent (a) Under the assumptions \eqref{eq:best_approx}-\eqref{eq:interpolation_estimate}, \eqref{eq:right_inverse}-\eqref{eq:companion_estimate}, 
the exact solution $u$ to \eqref{eq:weakabstract} and the 
discrete solution $u_\nc$ to \eqref{eq:modified_discrete} satisfy
$$\trinl u -  u_\nc \trinr_\pw  \le \sqrt{1+\Lambda_0^2}\trinl u - I_\nc u\trinr_\pw.$$

\medskip
\noindent (b) The best-approximation constant $\sqrt{1+\Lambda_0^2}=C_{\rm qo}$ is attained in the sense that there exist some $F=a(u,\bullet) \in V^* $ and $u \in V $ with 
$\trinl  u -  u_\nc \trinr_\pw= C_{\rm qo} \trinl  u -  I_\nc u \trinr_\pw >0$ for the discrete solution $u_\nc \in V_\nc$ to \eqref{eq:modified_discrete}.
\end{thm}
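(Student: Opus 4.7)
The plan for part (a) is to derive a sharp residual identity for $d_\nc := u_\nc - I_\nc u \in V_\nc$ and then invoke Pythagoras. First I would subtract $a_\pw(I_\nc u, v_\nc)$ from both sides of \eqref{eq:modified_discrete}, use \eqref{eq:best_approx} with $w_m = v_\nc \in V_\nc \subset P_m(\T)$ to rewrite $a_\pw(I_\nc u, v_\nc) = a_\pw(u, v_\nc)$, exploit that $a(u, J v_\nc) = a_\pw(u, J v_\nc)$ because $J v_\nc \in V$, and finally discard $a_\pw(I_\nc u, (J-1) v_\nc) = 0$, which holds because $I_\nc (J-1) v_\nc = v_\nc - v_\nc = 0$ by \eqref{eq:right_inverse} and $I_\nc$ is the orthogonal projection onto $V_\nc$ in $(V + V_\nc, a_\pw)$. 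This yields the compact identity
\begin{equation*}
a_\pw(d_\nc, v_\nc) = a_\pw(u - I_\nc u, (J-1) v_\nc) \quad \text{for all } v_\nc \in V_\nc.
\end{equation*}
Testing with $v_\nc = d_\nc$, applying Cauchy--Schwarz in $(V + V_\nc, a_\pw)$ together with \eqref{eq:lambda0}, and dividing by $\trinl d_\nc \trinr_\pw$ will produce $\trinl d_\nc \trinr_\pw \le \Lambda_0 \trinl u - I_\nc u \trinr_\pw$. The Pythagoras identity \eqref{eq:Pythogoras} with $v = u$ and $w_\nc = u_\nc$ then gives $\trinl u - u_\nc \trinr_\pw^2 = \trinl u - I_\nc u \trinr_\pw^2 + \trinl d_\nc \trinr_\pw^2 \le (1 + \Lambda_0^2) \trinl u - I_\nc u \trinr_\pw^2$, as claimed.

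For part (b), the plan is to exhibit a concrete $u \in V$ for which both Cauchy--Schwarz steps from part (a) become equalities. Because $V_\nc$ is finite-dimensional, the supremum defining $\Lambda_0$ in \eqref{eq:lambda0} is attained at some $w^* \in V_\nc \setminus \{0\}$ with $\trinl (1-J) w^* \trinr_\pw = \Lambda_0 \trinl w^* \trinr_\pw > 0$. I would then choose $u := J w^* \in V$ and set $F := a(u, \bullet) \in V^*$; by \eqref{eq:right_inverse} this forces $I_\nc u = w^*$, so $u - I_\nc u = (J-1) w^*$ with $\trinl u - I_\nc u \trinr_\pw = \Lambda_0 \trinl w^* \trinr_\pw > 0$. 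Substituting $v_\nc = w^*$ into the residual identity from part (a) gives
\begin{equation*}
a_\pw(d_\nc, w^*) = a_\pw((J-1) w^*, (J-1) w^*) = \Lambda_0^2 \trinl w^* \trinr_\pw^2,
\end{equation*}
and Cauchy--Schwarz in $V_\nc$ applied to the left-hand side will furnish the reverse bound $\trinl d_\nc \trinr_\pw \ge \Lambda_0^2 \trinl w^* \trinr_\pw = \Lambda_0 \trinl u - I_\nc u \trinr_\pw$. Combined with the upper bound from part (a), equality $\trinl d_\nc \trinr_\pw = \Lambda_0 \trinl u - I_\nc u \trinr_\pw$ is forced, and Pythagoras concludes $\trinl u - u_\nc \trinr_\pw = \sqrt{1 + \Lambda_0^2}\, \trinl u - I_\nc u \trinr_\pw > 0$.

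The principal subtlety will be isolating the compact residual identity in part (a): discarding the term $a_\pw(I_\nc u, (J-1) v_\nc)$ through the orthogonality $(J-1) v_\nc \perp V_\nc$ hidden inside \eqref{eq:right_inverse} is what sharpens the a priori constant from the cruder $\Lambda_\jc$ of a Strang-type bound down to $\Lambda_0$. Once the identity is in this form, the attainment in part (b) emerges by reading the two Cauchy--Schwarz inequalities backwards through the extremal pair $(u, v_\nc) = (J w^*, w^*)$, without ever having to compute $u_\nc$ explicitly.
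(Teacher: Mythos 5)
Your proof is correct. Part (a) is essentially the paper's argument: the compact residual identity $a_\pw(d_\nc, v_\nc) = a_\pw(u - I_\nc u, (J-1) v_\nc)$ you isolate is exactly the content of \eqref{eq:modified_discretefollowupcc1} after subtracting the Pythagoras identity, and the conclusion via Cauchy--Schwarz, $\Lambda_0$, and \eqref{eq:Pythogoras} is the same.

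Part (b), however, takes a genuinely different and somewhat leaner route. The paper invokes the Euler--Lagrange (eigenvalue) condition for the maximiser of the Rayleigh quotient, namely $a_\pw(v_\nc - Jv_\nc, w_\nc - Jw_\nc) = \Lambda_0^2\, a_\pw(v_\nc, w_\nc)$ for all $w_\nc \in V_\nc$, deduces $a_\pw(Jv_\nc,Jw_\nc)=(1+\Lambda_0^2)a_\pw(v_\nc,w_\nc)$, and from this computes the discrete solution explicitly as $u_\nc = -(1+\Lambda_0^2)v_\nc$ before evaluating the error. You instead plug the extremal vector $w^*$ into the residual identity of part (a), which yields a lower bound $\trinl d_\nc\trinr_\pw \ge \Lambda_0^2 \trinl w^*\trinr_\pw$ by a single Cauchy--Schwarz step, and then squeeze against the upper bound from part (a); no eigenvalue orthogonality relation and no explicit formula for $u_\nc$ are needed. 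This buys a proof that only uses attainment of the supremum (compactness of the unit sphere in finite-dimensional $V_\nc$), not the variational characterisation of the maximiser. The cost, if any, is that the paper's explicit $u_\nc$ makes all quantities transparent, which the authors reuse later in Section 5. One small point you should make explicit, as the paper does at the start of its proof of (b): the argument tacitly requires $\Lambda_0 > 0$ (which holds precisely when $V_\nc \not\subset V$, per Remark~\ref{rem:cc:section2:||J||>1}); the degenerate case $\Lambda_0 = 0$ must be handled separately, though it is trivial.
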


\noindent {\it Proof of (a).} 
Recall  the  error $e:=u-u_\nc \in V+V_\nc$ and its  interpolation $e_\nc:= I_\nc u- u_\nc = I_\nc e \in V_\nc$. 
The key identity 
\begin{align}\label{eq:key_identity}
a(u, J e_\nc) = F(Je_\nc) = a_\pw(u_\nc,e_\nc)
\end{align} 
follows directly from \eqref{eq:weakabstract} (resp. \eqref{eq:modified_discrete}) with the test function $J e_\nc \in V $ (resp. $e_\nc$). 
This  proves in \eqref{eq:split_alternative} that 
\begin{equation}\label{eq:modified_discretefollowupcc1}
\trinl e \trinr_\pw^2= a_\pw(u, e-J e_\nc)=a_\pw(u-I_\nc u, e-J e_\nc)
\end{equation}
with $a_\pw(I_\nc u, e-J e_\nc)=0$ in the last step.
The term $a_\pw(u-I_\nc u, e-J e_\nc)$ is estimated in \eqref{eqccsec2lastidentityinproofthm:sec2.energynorm0}.
It is true that $u_\nc$ denotes another discrete solution in  \eqref{eq:oscterm}-\eqref{eqccsec2lastidentityinproofthm:sec2.energynorm0}, but the related arguments remain valid 
verbatim in this proof as well and lead to the upper bound $(1+\Lambda_\jc) \trinl e \trinr_\pw \trinl u- I_\nc u \trinr_\pw$. This concludes  an easy proof of the  
best-approximation result 
 \[
 \trinl u -  u_\nc \trinr_\pw  \le (1+\Lambda_\jc) \trinl u - I_\nc u\trinr_\pw.
 \]
The proof of the optimal best-approximation constant in Theorem~\ref{thm:energy_norm} 
 revisits  \eqref{eq:modified_discretefollowupcc1} 
 and the  Pythagoras theorem $  \trinl e \trinr^2_\pw\  = \trinl u-I_\nc u \trinr_\pw^2+ \trinl e_\nc \trinr_\pw^2$ (e.g. \eqref{eq:Pythogoras}  with $v=e$ and $w_\nc=0$).
The combination with    \eqref{eq:modified_discretefollowupcc1} proves
\[
\trinl e_\nc \trinr_\pw^2=a_\pw(u-I_\nc u, e_\nc -J e_\nc).
\]
The Cauchy inequality and the continuity constant $\Lambda_0$ show  that
\[
\trinl e_\nc \trinr_\pw\le \Lambda_0\trinl   u-I_\nc u\trinr_\pw
\]
(after a division of $\trinl e_\nc \trinr_\pw$, if positive).
This and the above Pythagoras theorem result in  
\[
\trinl e \trinr^2_\pw\  = \trinl u-I_\nc u \trinr_\pw^2+ \trinl e_\nc \trinr_\pw^2\le (1+ \Lambda_0^2) \trinl u-I_\nc u \trinr_\pw^2 .
\]
This concludes the proof under  conditions weaker than   \eqref{eq:companion_estimate} and with a better constant.

\noindent {\it Proof of (b).}  The second assertion on the attainment is immediate for $\Lambda_0=0$; for {$V_\nc \subset V$}; see Remark \ref{rem:cc:section2:||J||>1}  for a discussion of this case. Hence suppose $\Lambda_0>0$ in the sequel.

\medskip
 \noindent Note that $\Lambda_0^2$ from \eqref{eq:lambda0} is an eigenvalue and there exists $v_\nc \in V_\nc$ with $\trinl v_\nc\trinr_\pw=1$ and
\begin{equation} \label{eigen1}
a_\pw(v_\nc-Jv_\nc, w_\nc- Jw_\nc) = \Lambda_0^2 a_\pw(v_\nc, w_\nc) \text{ for all } w_\nc \in V_\nc.
\end{equation}
For $w_\nc=v_\nc$, \eqref{eigen1} shows $\trinl v_\nc- J v_\nc \trinr_\pw=\Lambda_0.$
Since $a_\pw(v_\nc- Jv_\nc, w_\nc)=0$, \eqref{eigen1} implies
\begin{equation} \label{eigen2}
a_\pw(Jv_\nc, Jw_\nc) =(\Lambda_0^2+1)a_\pw(v_\nc, w_\nc)  \text{ for all } w_\nc \in V_\nc.
\end{equation}
Define  $F:=-a(J v_\nc,\bullet)$; so that \eqref{eq:weakabstract} has the exact solution $u=-Jv_\nc \in V$ and the discrete problem for  $u_\nc \in V_\nc$ reads
\begin{equation} \label{eigen3}
a_\pw(u_\nc, w_\nc) =F(Jw_\nc)= -a(Jv_\nc, Jw_\nc) \text{ for all } w_\nc \in V_\nc.
\end{equation}
The combination of \eqref{eigen2}-\eqref{eigen3} shows
$u_\nc=-(1+\Lambda_0^2) v_\nc.$
Consequently, $u-u_\nc=v_\nc-Jv_\nc +\Lambda_0^2 v_\nc$ and $v_\nc=-I_\nc u$. This and the orthogonality  $a(v_\nc - Jv_\nc, v_\nc)=0$ lead to 
\begin{align*}
\trinl u-u_\nc \trinr^2_\pw & =\trinl v_\nc-Jv_\nc \trinr^2_\pw +\Lambda_0^4  \trinl v_\nc \trinr_\pw^2 = 
\Lambda_0^2(1+\Lambda_0^2) >0
\end{align*}
with $\trinl v_\nc-J v_\nc  \trinr_\pw=\Lambda_0$ and $\trinl v_\nc \trinr_\pw=1$ in the last step. Since 
$\trinl u-I_\nc u \trinr_\pw=\trinl v_\nc-J v_\nc  \trinr_\pw=\Lambda_0 $ from the design of $u=-Jv_\nc$ with $I_\nc u=-v_\nc$, this proves
$\trinl u- u_\nc \trinr_\pw= \sqrt{1+\Lambda^2_0} \trinl u-I_\nc u \trinr_\pw.$
\qed

\begin{rem}[$V_\nc \nsubset V$] \label{rem:cc:section2:||J||>1} The case  $\Lambda_0=0$ corresponds to the  optimal case $\|J\|=1$ and implies 
that $J$ is the identity in $V_\nc\subset V$.  This is reserved for the conforming Ritz-Galerkin scheme and excluded throughout this paper.
\end{rem}

\begin{rem}[$V_\nc \neq P_m(\T)$] It is clear 
that $V_\nc \subset P_m(\T)$ and $V_\nc =P_m(\T)$ is excluded in the following sense. If a linear operator $I_\nc: V \rightarrow V_\nc$ exists with  \eqref{eq:best_approx} for $V_\nc = P_m(\T)$ and dimension $n=2$, $m=1$ or $m=2$, then there exists no right-inverse $J$ of $I_\nc$. The proof argues by contradiction, so let $J: P_m(\T) \rightarrow V$ be a right-inverse of $I_\nc$ and start with the observation that \eqref{eq:best_approx} implies $D_\pw^m I_\nc v = \Pi_0 D^m v $ for all $v= J v_\nc \in V$, $v_\nc \in  P_m(\T)$. Since $I_\nc v= v_\nc$ (from $I_\nc J v_\nc =v_\nc$), this means 
\begin{align}\label{eq:ort}
D_\pw^m v_\nc = \Pi_0 {D^m} J v_\nc  \text{ for all }v_\nc \in P_m(\T).
\end{align}
For a contradiction, let $\varphi_{\rm c} \in S^1_0(\T) \setminus P_0(\Omega)$ (so that $\trinl \varphi_{\rm c}\trinr \neq 0$) for $m=1$ and let $\Psi_{\rm c} \in  S^1(\T; {\mathbb R}^2)\setminus P_0(\Omega; {\mathbb R}^2)$ (so that $\trinl \Psi_{\rm c}\trinr \neq 0$)  and then define $v_m \in P_m(\T)$ for $x \in T \in \T$ by 
\begin{align*}
v_1(x)&:= (\text{Curl } \varphi_{\rm c})\cdot (x- \text{ mid }(T) ) \\ \text{ resp. } 
v_2(x)&:=\frac{1}{2} (x- \text{ mid }(T))\cdot \text{ (sym}   (\text{Curl } \Psi_{\rm c})) (x- \text{ mid }(T) ).
\end{align*}
The point is that $\nabla_\pw v_1= \text{Curl } \varphi_{\rm c} \in L^2(\Omega; {\mathbb R}^2)$ resp.  
$D^2_\pw v_2=\text{ sym }  (\text{Curl } \Psi_{\rm c}) \in L^2(\Omega; {\mathbb S})$ is 
$L^2$ perpendicular to $\nabla v$ resp. $D^2v$ for $v:=Jv_m \in V$. Hence
\[
0= \int_\Omega D^m_\pw v_m: D^m v \dx=  \int_\Omega D^m_\pw v_m: D^m Jv_m \dx=
\trinl v_m \trinr_\pw^2
\]
with \eqref{eq:ort} for $v_m \in P_m(\T)$ in the last step. This is a contradiction to the nontrivial choice of $\varphi_{\rm c}$ resp. $\Psi_{\rm c}$ with $\trinl v_m \trinr_\pw>0$.
\end{rem}

\subsection{Interpretation as a Petrov-Galerkin scheme}
This section continues with another interpretation of the modified  nonconforming finite element method as a Petrov-Galerkin scheme in the 
Hilbert space  $(\widehat V, a_\pw)$ for $\widehat V = V+V_\nc$. 
The best-approximation operator $I_\nc: \widehat V\to V_\nc$ in the Hilbert space  $( \widehat V, a_\pw)$ is characterized by the orthogonality $a_\pw(u_\nc, v-I_\nc v)=0$ for 
all $v\in V$.  Since $J$ is a right-inverse,  this and  $v=Jv_\nc$ for $v_\nc\in V_\nc$ imply  $a_\pw(u_\nc, J v_\nc -v_\nc )=0$. Suppose 
 $u\in V$ defines  $F:=a(u,\bullet)\in V^*$ (or vice versa owing to the Riesz isomorphism),  this and  $a_\pw( u_\nc,v_\nc)=a_\pw( u_\nc,J v_\nc)$ lead in 
\eqref{eq:modified_discrete} to
\begin{equation}\label{eqccsection1.2}
a_\pw( u- u_\nc,J v_\nc)= 0\quad\text{for all } v_\nc\in V_\nc
\end{equation}
(and this is equivalent to \eqref{eq:modified_discrete} for  $F=a(u,\bullet)$). This defines a {\it Petrov-Galerkin operator} $ u\mapsto u_\nc$ that is naturally extended to 
$ \widehat V := V+V_\nc$
with \eqref{eqccsection1.2} as follows. Any $\widehat u\in \widehat V$ solves \eqref{eq:modified_discrete} with  $F(Jv_\nc)$ 
replaced by $ a_\pw( \widehat u , J v_\nc)$
for all $v_\nc\in V_\nc$ with the solution  $P\widehat u:= u_\nc\in V_\nc$. Therefore   $P:  \widehat V\to  \widehat V$ 
is characterized by 
\begin{equation}\label{eqccsection1.3}
a_\pw( \widehat v- P  \widehat v ,J v_\nc)= 0\quad\text{for all } v_\nc\in V_\nc \text{ and for all }\widehat v\in \widehat V.
\end{equation}
This characterizes a  Petrov-Galerkin scheme in the Hilbert space  $( \widehat V, a_\pw)$ with the trial space $V_\nc$ and the test space $V_{\rm c}:=JV_\nc$. 
The  Petrov-Galerkin operator $P\in L(\widehat V)$ is regarded as a map from  $\widehat V$ into   $\widehat V$. It is well defined, linear, bounded and 
idempotent. (In particular,  any $u\in V$ is approximated  by $u_\nc=Pu$.)

Based on the interpretation as  a Petrov-Galerkin scheme, several conclusions may be drawn in the subsequent subsection.

\subsection{Comments on the best-approximation constant $C_{\rm qo}$}
The subsection gives certain characterizations of the best-approximation constant $ C_{\rm qo}$ in \eqref{eq:section2withcomments.cc1}
and shows in particular, that the multiplicative constant $C_{\rm qo}= \sqrt{1+\Lambda_0^2}$ in    Theorem~\ref{thm:energy_norm}  is optimal 
 in the resulting estimate 
 \begin{equation}\label{eq:section2withcomments.cc1}
\trinl u -  u_\nc \trinr_\pw  \le C_{\rm qo}  \trinl u - I_\nc u\trinr_\pw
\end{equation}
for the exact solution $u\in V$ to \eqref{eq:weakabstract} and the discrete solution $u_\nc\in V_\nc$ to \eqref{eq:modified_discrete} and an arbitrary right-hand side $F\in V^*$.
To be more precise, adopt the notation of a Petrov-Galerkin operator $P:V\to V_\nc$ from 
\eqref{eqccsection1.3} and specify  
\begin{equation}\label{eq:modified_discretefollowupccbestapproxconst1}
C_{\rm qo} := \sup_{v\in V \setminus V_\nc } \trinl v-Pv \trinr_\pw/  \trinl v-I_\nc v \trinr_\pw
\end{equation}
with a supremum over all $v\in V$ with   $v\ne I_\nc v$ (for this leads to an undefined quotient $0/0$ that can be excluded)  in the definition of the best 
multiplicative constant  \eqref{eq:modified_discretefollowupccbestapproxconst1} for  \eqref{eq:section2withcomments.cc1}.  
To avoid a pathological situation, suppose throughout this paper that 
$ V \setminus V_\nc$ is  non-empty and that $V_\nc\ne\{0\}$ is nontrivial (as well as $V$). (The overall intention is that   the nonconforming 
finite element space $V_\nc\nsubset V$ is {\it not} conforming, but the case  $V_\nc\subset V$ is actually included in the analysis of this section.)
Let $\|J\|:=\|J\|_{L(V_\nc;V)}$ abbreviate  the operator norm of the right-inverse $J: V_\nc\to V$.

\begin{thm}[characterization of best-approximation constant]\label{thm:bestapxconstantsareallequal}
 $C_{\rm qo}= \sqrt{1+\Lambda_0^2}=\|J\|$. 
\end{thm}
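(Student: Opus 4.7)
The plan is to establish the two equalities $C_{\rm qo} = \sqrt{1+\Lambda_0^2}$ and $\sqrt{1+\Lambda_0^2} = \|J\|$ separately, with the first largely already in hand from Theorem~\ref{thm:energy_norm} and the second following from a Pythagoras identity in the Hilbert space $(\widehat V, a_\pw)$.

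For the first equality $C_{\rm qo} = \sqrt{1+\Lambda_0^2}$, I would observe that the ``$\le$'' direction is exactly the content of Theorem~\ref{thm:energy_norm}(a): for any $F = a(u,\bullet) \in V^*$ arising from some $u \in V \setminus V_\nc$, the corresponding $u_\nc = Pu$ satisfies $\trinl u - Pu \trinr_\pw \le \sqrt{1+\Lambda_0^2}\, \trinl u - I_\nc u \trinr_\pw$, and taking the supremum in the definition \eqref{eq:modified_discretefollowupccbestapproxconst1} yields $C_{\rm qo} \le \sqrt{1+\Lambda_0^2}$. The reverse inequality ``$\ge$'' is the attainment statement of Theorem~\ref{thm:energy_norm}(b): the eigenvector $v_\nc$ associated with $\Lambda_0^2$ produces, via $u := -Jv_\nc$, a pair $(u, u_\nc)$ realizing the ratio $\trinl u - u_\nc \trinr_\pw / \trinl u - I_\nc u \trinr_\pw = \sqrt{1+\Lambda_0^2}$, so the supremum is saturated.

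For the second equality $\sqrt{1+\Lambda_0^2} = \|J\|$, the key observation is that the defining property $I_\nc J = \mathrm{id}$ in $V_\nc$ rewrites $J v_\nc - v_\nc = Jv_\nc - I_\nc(Jv_\nc)$, which is by \eqref{eq:best_approx} (extended to $\widehat V$) orthogonal to $V_\nc$ in $(\widehat V, a_\pw)$. An application of Pythagoras, in the spirit of \eqref{eq:Pythogoras} with $v = Jv_\nc$ and $w_\nc = 0$, therefore gives
\begin{equation*}
\trinl J v_\nc \trinr^2 = \trinl v_\nc \trinr_\pw^2 + \trinl J v_\nc - v_\nc \trinr_\pw^2
\quad\text{for all } v_\nc \in V_\nc.
\end{equation*}
Dividing by $\trinl v_\nc \trinr_\pw^2$ and taking the supremum over $v_\nc \in V_\nc \setminus \{0\}$ gives $\|J\|^2 = 1 + \Lambda_0^2$ directly from the definition \eqref{eq:lambda0} of $\Lambda_0$.

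There is no real obstacle here; the only point requiring care is to justify that $Jv_\nc - v_\nc \perp V_\nc$, which hinges on interpreting $I_\nc$ as the orthogonal projection onto $V_\nc$ in $(\widehat V, a_\pw)$ (as discussed in the remark following \eqref{eq:Pythogoras}) rather than as an operator restricted to $V$. Once this is noted, both equalities drop out without any additional computation, and the theorem is complete.
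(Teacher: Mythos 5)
Your proof is correct, and it takes a genuinely different and more economical route than the paper. The paper proves all three quantities equal through the Petrov--Galerkin operator $P$: Step~1 identifies $C_{\rm qo}=\|1-P\|_{L(\widehat V)}$; Steps~2--3 compute $\|P\|_{L(\widehat V)}=\|P\|_{L(V_{\rm c};V_\nc)}=\|J\|$ by a stiffness-matrix argument and the Rayleigh--Ritz principle; Step~4 is the Pythagoras identity $\|J\|^2=1+\Lambda_0^2$ (which you reproduce); and Step~5 invokes Kato's lemma $\|1-P\|=\|P\|$ for oblique projections to close the chain. You sidestep Steps~1--3 and Step~5 entirely by observing that $C_{\rm qo}=\sqrt{1+\Lambda_0^2}$ is already the combined content of Theorem~\ref{thm:energy_norm}(a) (upper bound) and its part~(b) (attainment, once one checks $u=-Jv_\nc\ne I_\nc u=-v_\nc$, which holds because $\Lambda_0>0$), leaving only the Pythagoras step for $\|J\|^2=1+\Lambda_0^2$ --- precisely the paper's Step~4. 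The trade-off: your route is shorter and reuses earlier results, but the paper's self-contained Petrov--Galerkin derivation produces intermediate identities ($C_{\rm qo}=\|1-P\|_{L(\widehat V)}$, $\|P\|_{L(\widehat V)}=\|J\|$, and later $\Lambda_0=\|P\|_{L(V_\nc^\perp;\widehat V)}$) that the paper exploits in the remarks that follow the theorem, and it exposes the connection to the abstract theory of oblique projections and Kato's lemma; those byproducts are lost with your approach.
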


\begin{proof} 
{\it Step~1 recalls   $\| 1- P \|_{L(\widehat V)}=C_{\rm qo}$. } This is well known in the mathematical foundations of Petrov-Galerkin schemes and the short proof is 
given for completeness.
Any $\widehat v\in \widehat V=V+V_\nc$ is of the form $\widehat v=v+v_\nc=w+w_\nc$ for
$w:=(1-I_\nc)v\in (1-I_\nc)V=V_\nc^\perp $ and 
$w_\nc =I_\nc v+v_\nc\in V_\nc$.
Since $P\widehat v=Pw+w_\nc$ and $\widehat v-P\widehat v=w-Pw$, the orthogonal split $\widehat v=w+w_\nc$ shows  
\[
\| 1- P \|_{L(\widehat V)}=\sup_{w,w_\nc}  \trinl w-Pw  \trinr_\pw   ( \trinl w  \trinr_\pw^2+\trinl w_\nc  \trinr_\pw^2)^{-1/2}=
\sup_{w\in V_\nc^\perp\setminus\{0\}}  \trinl w-P w \trinr_\pw   / \trinl w  \trinr_\pw
\]
with the abbreviation  $\sup_{w,w_\nc}$ for the  supremum over all  $(w,w_\nc )\in (V_\nc^\perp\times V_\nc)\setminus\{0\}$. This and 
the re-substitution $w=v-I_\nc v$ and  $w-Pw=v-P v $ for $v\in V$ in the second equality conclude the proof,
\[
\| 1- P \|_{L(\widehat V)}=\| 1- P \|_{L(V_\nc^\perp;\widehat V)}=\sup_{v\in V\setminus V_\nc }  \trinl v-P v \trinr_\pw   / \trinl v-I_\nc v  \trinr_\pw=C_{\rm qo} .\qed
\]
{\it Step~2 establishes $\| P \|_{L(\widehat V) } = \| P \|_{L(V_{\rm c};V_\nc) }$} for the conforming companion finite element space $V_{\rm c}:= J V_\nc$.
Since $ \| P \|_{L(V_{\rm c};V_\nc)}\le \| P \|_{L(\widehat V) }$ is obvious (for $ V_{\rm c}\subset \widehat V$),  the proof of the converse  inequality  concerns an arbitrary
$\widehat v \in  \widehat V$ and its orthogonal projection $v_{\rm c}$ onto the (finite-dimensional, whence closed) subspace  $V_{\rm c}:= J V_\nc$ in the Hilbert space 
$(\widehat V, a_\pw)$ with orthogonality written as $\perp$. In particular, $ v_{\rm c}\in V_{\rm c}$ and $\widehat v- v_{\rm c}\perp V_{\rm c}$. Recall from \eqref{eqccsection1.3}
that $ P  \widehat v$ is characterized by $ P  \widehat v\in V_\nc$ and  $\widehat v- P  \widehat v\perp V_{\rm c}$. The two orthogonalities lead to 
$v_{\rm c}-P\widehat v\perp V_{\rm c}$, whence 
$ P  \widehat v=P v_{\rm c}$.
This and  $\trinl v_{\rm c} \trinr_\pw\le \trinl \widehat v \trinr_\pw$ imply
\[
\trinl  P \widehat v \trinr_\pw=\trinl  P v_{\rm c} \trinr_\pw \le \| P \|_{L(V_{\rm c};V_\nc)} \trinl  v_{\rm c} \trinr_\pw\le \| P \|_{L(V_{\rm c};V_\nc)} \trinl \widehat v \trinr_\pw.  \qed
\]
{\it Step~3 computes $ \| P \|_{L(V_{\rm c};V_\nc) }=\| J \| $} with linear algebra. Given any basis $(\psi_1,\dots, \psi_N)$ of $V_\nc$, 
 $(J\psi_1,\dots, J\psi_N)$ forms a basis of $V_{\rm c}$ 
 (for  $v_\nc\in V_\nc$ with  $J v_\nc=0 $ implies $v_\nc= I_\nc J  v_\nc=0$). 
The stiffness matrices $A, B\in \mathbb{R}^{N\times N}$ are defined   by
\[
A_{jk}:= a_\pw(\psi_j,\psi_k)\quad\text{amd}\quad B_{jk}:= a_\pw(J\psi_j,J \psi_k)\quad\text{for all } j,k=1,\dots, N.
\]
Then $A$ and $B$ are SPD and characterize the Petrov-Galerin map $P:V_{\rm c}\to V_\nc$ in coefficient vectors $x,y\in \mathbb{R}^N$ for 
$v_\nc=\sum_{j=1}^N x_j \psi_j$ and $v_{\rm c}=\sum_{j=1}^N y_j J \psi_j$ by  $v_\nc=P v_{\rm c}$ is equivalent to $Ax=By$. 
The norm  $\| P \|_{L(V_{\rm c};V_\nc)}$   of $P:V_{\rm c}\to V_\nc$ involves the norms with the squares 
$ \trinl  P v_{\rm c} \trinr_\pw^2=\trinl  v_\nc \trinr_\pw^2= x\cdot Ax$ and $\trinl  v_{\rm c} \trinr_\pw^2= y\cdot By$ and lead, for $x,y\in  \mathbb{R}^N\setminus\{0\}$ with  $Ax=By$,  to
$ y\cdot By=x \cdot A B^{-1}Ax $. This and the Rayleigh-Ritz $\min$-$\max$ principle  show 
\[
\| P \|_{L(V_{\rm c};V_\nc)}^2=\sup_{v_{\rm c} \in V_{\rm c} \setminus\{0\}}    \frac{      \trinl  P v_{\rm c} \trinr_\pw^2}{   \trinl  v_{\rm c} \trinr_\pw^2}=
\sup_{x\in  \mathbb{R}^N\setminus\{0\}}    \frac{ x\cdot Ax }{ x \cdot A B^{-1}Ax } = \lambda 
\]
for the maximal eigenvalue of an eigenpair $(\lambda, x)\in \mathbb{R}_+\times  \mathbb{R}^N\setminus\{0\}$ of the generalized
eigenvalue problem $Ax=\lambda  A B^{-1}Ax$. Since the latter is equivalent to $ Bx=\lambda A x$, the maximal eigenvalue is
\[
\lambda=\sup_{x\in  \mathbb{R}^N\setminus\{0\}}     \frac{  x \cdot Bx }{x\cdot Ax}  
=\sup_{v_\nc \in V_\nc \setminus\{0\}}     \frac{    \trinl  J v_\nc \trinr_\pw^2 }{  \trinl  v_\nc \trinr_\pw^2}
= \|J\|^{2}.  \qed
\]
{\it Step~4 verifies $\| J \|^2=1+\Lambda_0^2 $.}   Since $J$ is a right-inverse to $I_\nc$,  the Pythagoras theorem 
$ \trinl  J v_\nc \trinr_\pw^2=\trinl  v_\nc - J v_\nc \trinr_\pw^2+  \trinl  v_\nc \trinr_\pw^2$ concludes the proof by the definition of $\|J\|$ and $\Lambda_0$:
\[
\| J \|^2=\sup_{v_\nc \in V_\nc \setminus\{0\}}     \frac{  \trinl  v_\nc - J v_\nc \trinr_\pw^2+  \trinl  v_\nc \trinr_\pw^2   }{  \trinl  v_\nc \trinr_\pw^2}
=1+\sup_{v_\nc \in V_\nc \setminus\{0\}}  \frac{  \trinl  v_\nc - J v_\nc \trinr_\pw^2  }{  \trinl  v_\nc \trinr_\pw^2}=1+\Lambda_0^2.
\qed
\]
{\it Step~5 concludes the proof with Kato's lemma.} Any oblique projection (that is a bounded linear map $P\in L(\widehat V)$ in a Hilbert space $\widehat V$ with $P^2=P$)
satisfies $\|1-P\|_{L(\widehat V)} =\|P\|_{L(\widehat V)}$ provided $0\ne P\ne 1$. This is known as Kato's lemma \cite{kato60} with many very different proofs in \cite{Szy06}.
The summary of all steps concludes the proof.
\end{proof}

\begin{rem}
Veeser et al. \cite{veeser_zanotti2} provide the bound $C_{\rm qo} {=} \|J\|$,  while  Theorem~\ref{thm:energy_norm} 
gives  $C_{\rm qo}=\sqrt{1+\Lambda_0^2}$ and 
both results are optimal. 
\end{rem}

\begin{rem}[$J$ vs $V_{\rm c}$] The Petrov-Galerkin scheme and its operator $P$ require the trial space $V_\nc$ and the test space $V_{\rm c}$, but no knowledge 
about $J$. However, $V_{\rm c}:= J(V_\nc)$ depends very much on the right-inverse. In fact, given any $V_{\rm c}\subset V$ of dimension $\dim V_{\rm c}=\dim V_\nc\in \mathbb{N}$,
there is at most one right inverse:   If $I_\nc$ is not injective as a linear map from $V_{\rm c}$ into $V_\nc$, or equivalently  (for $\dim V_{\rm c}=\dim V_\nc$) it is not surjective, then 
there exists no right-inverse $J:V_\nc\to V$ with $V_{\rm c}=J(V_\nc)$. But otherwise $J$ exists and is uniquely determined by $V_{\rm c}$.  
(If $J: V_\nc\to V_{\rm c}$ is a linear map and $I_\nc J$ is identity in $V_\nc$ then $J$ is injective and hence bijective for  $\dim V_{\rm c}=\dim V_\nc$.
Then $J$ is the inverse of $I_\nc$, when   $I_\nc$ is regarded  as a bijection of $V_{\rm c}$ onto $V_\nc$.) 
\end{rem}

\begin{rem}[$\|J\|>1$]\label{rem:cc:section2a:||J||>1}
Theorem~\ref{thm:bestapxconstantsareallequal} asserts $\|J \|\ge1$ and the optimal case $\|J\|=1$ 
  for the conforming Ritz-Galerkin scheme is discussed in Remark~\ref{rem:cc:section2:||J||>1}. For the truly nonconforming situation $V_\nc\nsubset V$,
the quality of the scheme is controlled by $C_{\rm qo}=\|J\|>1$. 
\end{rem}
 
\begin{rem}[another characterization of $\Lambda_0$] 
Theorem~\ref{thm:bestapxconstantsareallequal} also reveals  the  identity 
\(
\Lambda_0=\| P\|_{L(V_\nc^\perp;\widehat V)}
\)
the orthogonal complement of $V_\nc$ in $\widehat V$.  This identity is behind the refined proof of Theorem~\ref{thm:energy_norm}. 
The proof of it considers $Pw\perp w\in V_\nc^\perp$ and  the Pythagoras theorem  
$\trinl w-Pw  \trinr_\pw ^2 = \trinl Pw  \trinr_\pw^2+ \trinl w \trinr_\pw^2$ for the first equation in 
\[
\| 1-P\|_{L(V_\nc^\perp;\widehat V)}^2  =\sup_{w\in V_\nc^\perp\setminus\{0\}} ( \trinl Pw  \trinr_\pw ^2 + \trinl w \trinr_\pw^2) \trinl w \trinr_\pw^{-2}=1+\| P\|_{L(V_\nc^\perp;\widehat V)}^2
 \]
Theorem~\ref{thm:bestapxconstantsareallequal} and the last line in Step~1 of its proof show that this is equal to $1+\Lambda_0^2$. \qed
\end{rem}

\begin{rem}[post-processing]
The error $u-J u_\nc$ of the post-processed discrete solution can be measured in all Sobolev scales $H^{s}(\Omega)$ for $-m\le s\le m$ and we start with
$s=m$ and the affirmative estimate
\[
\left| \trinl u- u_\nc  \trinr_\pw -  \trinl  u-J u_\nc\trinr\right| \le \Lambda_{\rm J} \trinl u- u_\nc  \trinr_\pw\le \Lambda_{\rm J}  C_{\rm qo}  \trinl u- I_\nc  u \trinr_\pw.
\]
(This follows from a reverse triangle inequality and \eqref{eq:companion_estimate} plus Theorem \ref{thm:energy_norm}.) This estimate compares with 
errors in the continuous and piecewise energy norms up to a best-approximation term on the right-hand side. In particular, the post-processing is quasi-optimal.
However, it is {\it not} quasi-optimal in comparison with the best-approximation in $V_{\rm c}:= J V_\nc$: 
Otherwise, with the 
Petrov-Galerkin operator $P:V\to V_\nc$ from \eqref{eqccsection1.3}, any inequality 
 $ \trinl v- JP v  \trinr_\pw\lesssim  \trinl v-   v_{\rm c} \trinr_\pw$ for all   $v\in V$ and $v_{\rm c}\in V_{\rm c}$ implies,
 in particular, that   $v_{\rm c}=J P v_{\rm c}$ for  all $v=v_{\rm c}\in V_{\rm c}$.
Since $J$ is a right-inverse, the last identity shows that $I_\nc v_{\rm c}=P v_{\rm c}$ for all $v_{\rm c}\in V_{\rm c}$. 
But this implies  $\| P \|_{L(V_{\rm c};V_\nc) }\le 1$.  Step 3 in the proof of Theorem \ref{thm:bestapxconstantsareallequal} leads to $\|J\|=1$. This is
characterized in Remark \ref{rem:cc:section2:||J||>1} as the conforming case $V_\nc=V_{\rm c}\subset V$.
\end{rem}

\begin{rem}[sharper constants in Theorem~\ref{thm:sec2.energynorm0}] \label{remarkccendofsection2nolast}
The constant $\constant [1] $
in \eqref{eq:improved} is not much larger than  $\sqrt{1+\Lambda_0^2 } $.
Follow the proof until \eqref{eq:oscterm} and replace $ \Lambda_\jc$ by  $\Lambda_0$ and ignore the last estimate. 
Utilize $\Lambda_0$ for  $\trinl  (1-J) e_\nc \trinr_\pw \le \Lambda_0 \trinl  e_\nc  \trinr_\pw$ in the equality 
\eqref{eqccsec2lastidentityinproofthm:sec2.energynorm0}. This leads to an estimate  for the right-hand side of \eqref{eq:split_alternative} 
that includes the additive term  $\trinl u-I_\nc u \trinr_{\pw}^2$ (still from \eqref{eqccsec2lastidentityinproofthm:sec2.energynorm0}) that 
arises also in the Pythagoras theorem $ \trinl   e \trinr_\pw^2 = \trinl   e_\nc \trinr_\pw^2 +\trinl u-I_\nc u \trinr_{\pw}^2$ on the left-hand side.
This proves (after a division of $\trinl e_\nc \trinr_\pw$ if positive)
\[
 \trinl   e_\nc \trinr_\pw    \le  \Lambda_0 (\kappa_m  \osc_m(f,\T)  +  \trinl u-I_\nc u \trinr_{\pw}) .
\]
This identity and the  Pythagoras theorem again conclude the proof of
\[
\trinl e_\nc \trinr_\pw^2 \le   (1+\Lambda^2_0)\trinl u-I_\nc u \trinr_{\pw}^2 
+ 2 \Lambda^2_0  \kappa_m \osc_m(f,\T) \trinl u-I_\nc u \trinr_{\pw}+ \kappa^2_m  \osc_m(f,\T)^2     . 
\]
The right-hand side is equal to $(\trinl u-I_\nc u \trinr_{\pw}, \osc_m(f,\T) ) \cdot  A  (\trinl u-I_\nc u \trinr_{\pw}, \osc_m(f,\T) ) $ with the
symmetric $2\times 2$ matrix $A$ that has entries $A_{11}=1+\Lambda^2_0$, $A_{12}= \Lambda^2_0  \kappa_m$, and $A_{22}= \kappa^2_m$.
The eigenvalues of  $A$ can be computed
and the bigger  one leads to a bound \eqref{eq:improved} with 
\[
2(1+\Lambda_0^2)  <2 \constant [1]^2= 1+\kappa_m^2+\Lambda_0^2+ \sqrt{(1-\kappa_m^2+\Lambda_0^2)^2+4\kappa_m^2\Lambda_0^4} .
\]
Since $\kappa_m$ is small, the constant $\constant [1]$ 
is not significantly larger than $C_{\rm qo}$ in \eqref{eq:section2withcomments.cc1}. \qed
\end{rem}

\section{Weaker and piecewise Sobolev norm estimates}\label{sec:lower_order}
\subsection{Lower-order error estimates}
This section lists further conditions sufficient for a lower-order a priori error estimate for the 
discrete problem \eqref{eq:discrete} under the hypotheses \eqref{eq:best_approx}-\eqref{eq:interpolation_estimate} and \eqref{eq:right_inverse}-\eqref{eq:companion_estimate}.

\medskip
The duality argument relies on the elliptic regularity  and the approximation properties of the dual and discrete problem with constants $0 < \sigma \le 1$ and $ 0<   C_{\rm reg}(s), C_{\rm int}(s)  <\infty$ such that
for any $G\in H^{-s}(\Omega)$ with $m-\sigma \le s \le m$, 
the weak solution $ z \in V=H^m_0(\Omega) $  to  the PDE $(-1)^m \Delta^m z=G $  satisfies  $ z \in V \cap H^{2m-s}(\Omega) $
and 
\begin{align}
\|z \|_{ H^{2m-s}(\Omega)}&\le C_{\rm reg}(s)  \| G \|_{H^{-s}(\Omega)} , \label{eq:extrareg} \\
\trinl  z - I_\nc z \trinr _\pw &\le C_{\rm int}(s)  h_{\rm max}^{m-s} \|z \|_{ H^{2m-s}(\Omega)} .  
\label{eq:est}
\end{align}
Since  in general $u_\nc \in V_\nc$ may not belong to { $H^{ m-s }(\Omega)$}, the post-processing $J u_\nc$ is considered in the lower-order a priori error estimate.

\begin{thm}[lower-order error estimates]\label{thm:aux1}
Suppose the hypotheses of Theorem \ref{thm:energy_norm}, adopt the notation \eqref{eq:extrareg}-\eqref{eq:est} for some $s$ with   $m-\sigma \le s \le m$. Given any $F\in H^{-s}(\Omega) $, 
 the solutions $u \in V$ to \eqref{eq:weakabstract} and $u_\nc \in V_\nc$ to \eqref{eq:discrete} satisfy 
$(a) \; \|u - J u_\nc \|_{H^{s}(\Omega)} \le 
\constant{}\label{lower1}(s)  h_{\max}^{m-s}~\trinl u-u_\nc\trinr_\pw$ and
$(b) \;   \|u-u_\nc\|_{{H^s}(\T)} \le   \constant{}\label{lower2}(s) h_{\max}^{ m-s}  
\trinl  {u -  u_\nc} \trinr_\pw.$
\end{thm}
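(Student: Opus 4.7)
The plan is to prove (a) by an Aubin--Nitsche-type duality argument and then derive (b) from (a) via the triangle inequality with $Ju_\nc$ in between. For part (a), given $g\in H^{-s}(\Omega)$, I would let $z\in V\cap H^{2m-s}(\Omega)$ solve the dual problem $(-1)^m\Delta^m z=g$, so that $\|z\|_{H^{2m-s}(\Omega)}\le C_{\rm reg}(s)\|g\|_{H^{-s}(\Omega)}$ by \eqref{eq:extrareg} and $\trinl z-I_\nc z\trinr_\pw\le C_{\rm int}(s)C_{\rm reg}(s)h_{\max}^{m-s}\|g\|_{H^{-s}(\Omega)}$ by \eqref{eq:est}. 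Since $u-Ju_\nc\in V$, duality with the symmetric bilinear form $a$ gives $\langle g,u-Ju_\nc\rangle=a(z,u-Ju_\nc)$, and $\|u-Ju_\nc\|_{H^s(\Omega)}$ is recovered by taking the supremum over $g$ with $\|g\|_{H^{-s}(\Omega)}\le 1$.

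The heart of the argument is the rewriting
\[
a(z,u-Ju_\nc)=F(z-JI_\nc z)-a_\pw(z-I_\nc z,Ju_\nc-u_\nc),
\]
obtained by decomposing $Ju_\nc=u_\nc+(Ju_\nc-u_\nc)$, applying the best-approximation \eqref{eq:best_approx} to both $z\in V$ and $Ju_\nc\in V$ against test polynomials $I_\nc z\in P_m(\T)$, and testing the modified discrete equation \eqref{eq:modified_discrete} with $v_\nc=I_\nc z$ to replace $a_\pw(I_\nc z,u_\nc)$ by $F(JI_\nc z)$. The second term is bounded directly by Cauchy--Schwarz, \eqref{eq:companion_estimate} with $v=u$, and the dual interpolation bound, producing the factor $\Lambda_\jc C_{\rm int}(s)C_{\rm reg}(s)h_{\max}^{m-s}\|g\|_{H^{-s}(\Omega)}\trinl u-u_\nc\trinr_\pw$. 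For the first term, since $z-JI_\nc z\in V$ I would substitute $F(v)=a(u,v)$; the key observation $I_\nc(z-JI_\nc z)=0$ triggers best-approximation $a_\pw(I_\nc u,z-JI_\nc z)=0$, whence $F(z-JI_\nc z)=a_\pw(u-I_\nc u,z-JI_\nc z)$, and a Cauchy--Schwarz combined with $\trinl z-JI_\nc z\trinr_\pw\le(1+\Lambda_\jc)\trinl z-I_\nc z\trinr_\pw$ from \eqref{eq:companion_estimate} and the Pythagoras bound $\trinl u-I_\nc u\trinr_\pw\le\trinl u-u_\nc\trinr_\pw$ from \eqref{eq:Pythogoras} yields a bound of the same form. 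Summing and passing to the supremum proves (a).

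For part (b), the triangle inequality
\[
\|u-u_\nc\|_{H^s(\T)}\le\|u-Ju_\nc\|_{H^s(\T)}+\|Ju_\nc-u_\nc\|_{H^s(\T)}
\]
together with the trivial bound $\|v\|_{H^s(\T)}\le\|v\|_{H^s(\Omega)}$ for $v\in H^s(\Omega)$ reduces the problem to (a) plus a direct estimate of $\|Ju_\nc-u_\nc\|_{H^s(\T)}$. Applying \eqref{eq:interpolation_estimate} with $v=Ju_\nc\in V$ (so $I_\nc v=u_\nc$) and combining with \eqref{eq:companion_estimate} gives $\|h_\T^{-m}(Ju_\nc-u_\nc)\|\le\kappa_m\Lambda_\jc\trinl u-u_\nc\trinr_\pw$, and a standard elementwise inverse estimate $\|w\|_{H^s(T)}\lesssim h_T^{-s}\|w\|_{L^2(T)}$ for polynomials of bounded degree upgrades this weighted $L^2$ bound to $\|Ju_\nc-u_\nc\|_{H^s(\T)}\lesssim h_{\max}^{m-s}\trinl u-u_\nc\trinr_\pw$.

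The main obstacle is the first term $F(z-JI_\nc z)$ in the dual identity: since $F$ is only assumed in $H^{-s}(\Omega)$, one cannot invoke any $L^2$ data-oscillation argument as in Subsection~\ref{subsectionAlternativeproof}. Using the identity $F=a(u,\cdot)$ on $V$ and the vanishing interpolation $I_\nc(z-JI_\nc z)=0$ to pull out $u-I_\nc u$ via best-approximation is the crucial manoeuvre that avoids any extra regularity assumption on $F$ and yields a bound purely in terms of $\trinl u-u_\nc\trinr_\pw$.
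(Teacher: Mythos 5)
Your proof is correct and rests on the same core ingredients as the paper — an Aubin--Nitsche duality argument, the key identity $a_\pw(u_\nc, I_\nc z) = F(JI_\nc z)$ from the modified discrete scheme \eqref{eq:modified_discrete}, the best-approximation property \eqref{eq:best_approx}, and the companion-operator estimate \eqref{eq:companion_estimate} — but your intermediate algebraic split in part (a) is genuinely different and in fact sharper. The paper uses the identity
\[
a(u - Ju_\nc, z) = a(u - Ju_\nc, z - JI_\nc z) + a_\pw(u_\nc - Ju_\nc, JI_\nc z - I_\nc z)
\]
and bounds each of the two terms by two applications of \eqref{eq:companion_estimate}, producing the constant $(1+\Lambda_{\rm J})^2 + \Lambda_{\rm J}^2 = 1 + 2\Lambda_{\rm J} + 2\Lambda_{\rm J}^2$. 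You instead isolate $F(z - JI_\nc z)$, substitute $F = a(u,\cdot)$, and exploit $I_\nc(z - JI_\nc z)=0$ to rewrite it as $a_\pw(u - I_\nc u,\, z - JI_\nc z)$; combined with the Pythagoras bound $\trinl u - I_\nc u\trinr_\pw \le \trinl u - u_\nc\trinr_\pw$, this produces the strictly smaller constant $1 + 2\Lambda_{\rm J}$. For part (b) your outline (triangle inequality through $Ju_\nc$, the trivial $\|\cdot\|_{H^s(\T)} \le \|\cdot\|_{H^s(\Omega)}$ for $H^s(\Omega)$ functions, and an elementwise bound on $Ju_\nc - u_\nc$) coincides with the paper's, but you compress the delicate fractional-order step into a single inverse inequality $\|w\|_{H^s(T)} \lesssim h_T^{-s}\|w\|_{L^2(T)}$ whose constant depends on $s$; the paper instead runs an interpolation-of-Sobolev-spaces argument with Young's inequality precisely to track how the constant degenerates as $s$ approaches the endpoints of $(m-1,m)$. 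Your version is correct in substance but silently absorbs this $s$-dependence into the generic $\lesssim$, so if quantitative control of the constant in (b) is wanted, the paper's interpolation route is the one that delivers it.
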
}

\noindent{\it Proof of (a).} 
The  duality argument is applied for the exact solution $u\in V $ to  \eqref{eq:weakabstract}
and the post-processing $v:= J u_\nc  \in V$ for the discrete solution  $u_\nc \in V_\nc$ to \eqref{eq:discrete}. 
The duality of  {$H^{s}_0(\Omega)$ and  $H^{-s}(\Omega)$} reveals
\begin{align*}
\|u -v\|_{H^{s}(\Omega)} & = \sup_{0 \ne G \in H^{-s}(\Omega)} \frac{G(u-v)}{\|G\|_{H^{-s}(\Omega)}}= G(u-v)
\end{align*}
and the supremum is attained for some  $G\in H^{-s}(\Omega)\subset V^*$ with norm  $\|G\|_{H^{-s}(\Omega)} =1$ 
(owing to a corollary of the Hahn-Banach theorem). The functional has a 
 unique Riesz representation $z\in V$ in the Hilbert space $(V,a)$,  $a(\bullet,z)=G\in V^*$;
i.e., $z\in V$   the weak solution to the PDE  $(-1)^m \Delta^m z=G $. 
The   elliptic  regularity \eqref{eq:extrareg}
leads to $ z \in V \cap H^{2m-s}(\Omega) $ and \eqref{eq:extrareg}-\eqref{eq:est}; in particular, 
\begin{align} \label{eq:u-va}
\|u -v\|_{H^{s}(\Omega)}& =a(u-v,z)\quad\text{and}\quad
\|z \|_{ H^{2m-s}(\Omega)}\le C_{\rm reg}(s).
\end{align}
Let $
v:= J u_\nc \in V$ and $\zeta:= J I_\nc z \in V$, where $u_\nc$ (resp. $z$) denote the discrete (resp. dual) solution.
The test function $\zeta= J I_\nc z  \in V$ in  \eqref{eq:weakabstract} and the test function $ I_\nc z$ in \eqref{eq:discrete} lead to
\[
a_\pw(u_\nc, I_\nc z) = F(\zeta) = a(u, \zeta).
\]
This and elementary algebra with $a_\pw((1-J) u_\nc , I_\nc z) =0= a_\pw(u_\nc, (1-J)I_\nc z) $  (from \eqref{eq:best_approx} and \eqref{eq:right_inverse}), result in
\begin{align}\label{eq:a(u-v,z)}
a(u-v,z)  = a(u-v,z - \zeta) +a_\pw(u_\nc-v, \zeta-I_\nc z).
\end{align}
From  \eqref{eq:companion_estimate} deduce that
\begin{align} \label{eq:diff_interpolation}
\trinl v- u_\nc \trinr_\pw & \le \Lambda_\jc \trinl u- u_\nc \trinr_\pw  \quad \text{and}\quad
\trinl \zeta- I_\nc z\trinr_\pw  \le \Lambda_\jc \trinl z- I_\nc z \trinr_\pw.
\end{align}
The combination of \eqref{eq:diff_interpolation}    
and triangle inequality leads to 
\begin{align} \label{eq:u-v}
\trinl u-v\trinr & \le \trinl u-u_\nc  \trinr_{\pw} + \trinl v-u_\nc   \trinr_{\pw} \le (1+\Lambda_\jc ) \trinl  u-u_\nc \trinr_\pw.
\end{align}
Similar arguments for  $(z,\zeta, I_\nc z)$ replacing $(u,v,u_\nc)$  reveal
\begin{align} \label{eq:z-zeta}
\trinl z-\zeta\trinr & \le (1+\Lambda_\jc )   \trinl z-I_\nc z\trinr_\pw.
\end{align}
A Cauchy inequality  for the terms on the right-hand side of \eqref{eq:a(u-v,z)}  followed by an application of \eqref{eq:diff_interpolation}-\eqref{eq:z-zeta} implies that  
$a(u-v,z )$  is bounded by 
 $ ((1+\Lambda_\jc)^2 +\Lambda_\jc^2) \trinl u-u_\nc \trinr_\pw  \trinl  z- I_\nc z \trinr_\pw$.
A substitution to \eqref{eq:u-va} leads to 
\[
\|u -v\|_{H^{s}(\Omega)}\le {((1+\Lambda_\jc)^2+ \Lambda_\jc^2)}   \trinl u-u_\nc\trinr_\pw \trinl  z- I_\nc z \trinr_\pw.
\]
This,   \eqref{eq:est}, and (\ref{eq:u-va}b)  lead to  
$\constant[\ref{lower1}](s):=C_{\rm int}(s) C_{\rm reg}(s)  ((1+\Lambda_\jc)^2+\Lambda_\jc^2) $. \qed

\medskip
\noindent{\it Proof of (b).}   The 
Sobolev-Slobodeckii semi-norm over $\Omega$ involves double integrals over $\Omega\times\Omega$ and so is larger   than or equal to the sum of the 
integrals over $T\times T$ for all the triangles $T\in\T$, i.e.,    $ \sum_{T\in\T} |\bullet  |_{H^s(T)}^2\le  |\bullet  |_{H^s(\Omega)}^2$ for any $m-1<s<m$.
Hence  Theorem \ref{thm:aux1}.a implies for all $s\text{ with }m -\sigma \le s \le m$ and a constant $\constant[\ref{lower1}](s)$ that
\begin{equation}\label{eqrefccnewa1b}
 \|u -J u_\nc \|_{H^{s}(\T)} \le \constant[\ref{lower1}](s)\, h_{\max}^{m-s}  \trinl u-u_\nc \trinr_\pw.
\end{equation}

\noindent The equivalence of the Sobolev-Slobodeckii norm and the norm by interpolation of Sobolev spaces \cite[Remark 9.1]{LM72}, for instance for a fixed 
reference triangle   $T=T_{\rm ref}$ with 
$\constant{}\label{ccint3}(s) =\constant[\ref{ccint3}](s,T_{\rm ref})$, provides for  ${w}:=(u_\nc-J u_\nc )|_T\in H^m(T)$  the estimate 
\begin{equation}\label{eqininterpolationofSobolevnormosonTc}
 \|{w} \|_{H^s(T)}   \le \constant[\ref{ccint3}](s)\,   \|{w} \|_{ H^{m-1}(T)}^{m-s}  \|{w} \|_{H^{m}(T)}^{s-m+1} \quad\text{for } m-1< s<m.
 \end{equation}
 A straightforward transformation of Sobolev norms \cite[Theorem 3.1.2]{Ciarlet}  shows
\eqref{eqininterpolationofSobolevnormosonTc} for any triangle  $T\in\T $ with 
$\constant[\ref{ccint3}]  (s)=\constant[\ref{ccint3}]  (s,T) 
= \kappa^{1+s} \constant[\ref{ccint3}](s,T_{\rm ref}) $ for the condition number $\kappa=\sigma_1/\sigma_2 $ of the affine transformation $a+Bx$
of $T_{\rm ref}$ to $T$ with the $2 \times 2 $ matrix $B$ and its positive singular values $\sigma_2 \le \sigma_1$. 
A more detailed analysis \cite{book_nncc} reveals that $\constant[\ref{ccint3}](s)$ exclusively depends on $s$ (but exploits
singularities as $s$ approaches the end-points $m-1$ and $m$). The estimate \eqref{eqininterpolationofSobolevnormosonTc} shows the first inequality in 
\[
\constant[\ref{ccint3}] (s)^{-2}\, 
\|{w} \|_{H^s(T)}^2   \le    \|{w} \|_{ H^{m-1}(T)}^{2(m-s)}  \|{w} \|_{H^{m}(T)}^{2(s-m+1)} 
\le   \|{w} \|_{ H^{m-1}(T)}^{2} +  \|{w} \|_{ H^{m-1}(T)}^{2(m-s)}  |{w} |_{H^{m}(T)}^{2(s-m+1)}
\]
with the subadditivity $(a+b)^p\le a^p + b^p $ for $a,b\ge 0$ and $0<p=s-m+1<1$ (e.g. from the concavity of $x\mapsto x^p$ for non-negative $x$) in the last step.  An elementary estimate is followed by the  Young's inequality $ab\le a^p/p+b^q/q$ for $p=(m-s)^{-1}$, $q=(s-m+1)^{-1}$, $a=  \| h_T^{-1} {w} \|_{ H^{1}(T)}^{2(m-s)}$, and
$b= |{w} |_{H^{2}(T)}^{2(s-m+1)}$ 
to prove
\[
 h_{\max} ^{ 2(s-m)} \|{w} \|_{ H^{m-1}(T)}^{2(m-s)}  |{w}|_{H^{m}(T)}^{2(s-m+1)}\le  \|h_\T^{-1} {w} \|_{ H^{m-1}(T)}^{2(m-s)}|{w} |_{H^{m}(T)}^{2(s-m+1)}\le 
 \|h_\T^{-1}  {w} \|_{ H^{m-1}(T)}^{2}  +     |{w} |_{H^{m}(T)}^{2} .
\]
This and the trivial estimate $h_{\max} ^{ 2(s-m+1)} \le \text{\rm diam}(\Omega)^{2(s-m+1)}$ lead to 
$\constant{}\label{ccint4}(s)=\constant[\ref{ccint3}] (s)^{2} (1+\text{\rm diam}(\Omega)^{2(s-m+1)})$ in 
\[
 \|{w} \|_{H^s(T)}^2 \le \constant[\ref{ccint4}](s) h_{\max} ^{ 2(m-s)} \left( \|h_T^{-1}  {w} \|_{ H^{m-1}(T)}^{2}  +   |{w} |_{H^{m}(T)}^{2} \right).
\]
The sum over all those contributions over $T\in\T$ reads
\begin{align*}
\| u_\nc-J u_\nc \|_{H^s(\T)}&\le  \sqrt{\constant[\ref{ccint4}](s)} h_{\max}^{ m-s}  \left(  \| h_\T^{-1}(u_\nc-J u_\nc) \|_{H^{m-1}(\T)}+ 
\trinl  u_\nc-J  u_\nc \trinr_\pw\right).
\end{align*}
An inverse estimate  shows $\|h_T^{-1} (u_\nc-J u_\nc) \|_{ H^{m-1}(T)} \lesssim  h_T^{-m} \| u_\nc-J  u_\nc \|_{L^2(T)}$ for all $T \in \T$  with a constant in $\lesssim$ that  depends  only on the shape-regularity. This, \eqref{eq:interpolation_estimate}, a triangle inequality, \eqref{eqrefccnewa1b},  and \eqref{eq:companion_estimate}
conclude the proof of  $(b)$.
\qed

\subsection{Precise convergence rates for quasi-uniform meshes}\label{sec:precise}
This subsection comments on the sharpness of the {a~priori} estimates of Theorem~\ref{thm:aux1} and discusses conditions sufficient for the convergence 
rate $ t:= \min\{2 \sigma, m+\sigma -s\} $  of the  error $u- J u_\nc \in V=H^m_0(\Omega)$ in the norm of $H^s(\Omega)$ for $-m \le s \le m$ as 
$h_{\rm max} \rightarrow 0$. Under the 
generic assumption $\trinl u-u_\nc \trinr_{\pw} \approx h_{\max}^{\sigma}$, the estimate
\begin{align} \label{eq:sharp}
\|u - J u_\nc \|_{H^s(\Omega)} \approx h_{\max}^t \text { for }  -m \le s < m+1/2
\end{align}
holds for a class of quasi-uniform shape-regular triangulations $\T$  with sufficiently small maximal mesh-size $h_{\max} \ll 1$.
Figure \ref{fig:rates} depicts the convergence rate $t$  in \eqref{eq:sharp} as $h_{\max} \rightarrow 0$.
\begin{center}
\def\sig{2}     
\def\m{5}       
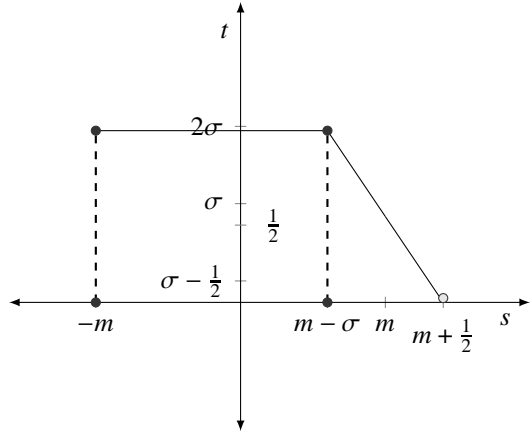
\begin{figure}
\centering
\begin{tikzpicture}[>=latex] 
\begin{axis}[
axis x line=middle,
axis y line=middle,
xmin=-8,
xmax=10,
ymin=-3,
ymax=7,
axis line style={latex-latex},
every axis x label/.style={
	at={(ticklabel* cs:0.92)},
	anchor=west,
},
every axis y label/.style={
	at={(ticklabel* cs:0.89)},
	anchor=south,
},
xtick={-5,3,5,7},
xticklabels={$-m$,$m - \sigma$,$m$, $m+\frac{1}{2}$},
ytick={0.5,2.3,4.1},
yticklabels={$\sigma - \frac{1}{2}$,$\sigma$, $2\sigma$},
extra y ticks={4},
extra y tick labels={$2\sigma$},
extra y tick style={
	yticklabel style={yshift=1ex, xshift=0.5ex,anchor=east}
},
extra y ticks={1.8},
extra y tick labels={$\frac{1}{2}$},
extra y tick style={
	yticklabel style={yshift=-1ex, xshift=3.5ex,anchor=east}
},
xlabel={$s$},
ylabel={${t}$},
xlabel style={below right},
ylabel style={above left},
]
\addplot [mark=none,domain=-5:3] {4};    
\addplot [mark=none,domain=3:5] {7 - x };
\addplot [mark=none,domain=5:7] {7 - x };
\draw[-] [thick,dashed] (axis cs: -5,0) -- (axis cs: -5,4);
\draw[-] [thick,dashed] (axis cs: 3,0) -- (axis cs: 3,4);
 \node at (axis cs:-5,0) [circle, scale=0.3, draw=black!80,fill=black!80] {};
  \node at (axis cs:-5,4) [circle, scale=0.3, draw=black!80,fill=black!80] {};
   \node at (axis cs:3,0) [circle, scale=0.3, draw=black!80,fill=black!80] {};
    \node at (axis cs:3,4) [circle, scale=0.3, draw=black!80,fill=black!80] {};
        \node at (axis cs:7,0.1) [circle, scale=0.3, draw=black!80,fill=black!10] {};
\end{axis}
\end{tikzpicture}
\caption{Rates of convergence $t$ of $\|u-Ju_{\rm nc}\|_{H^s(\Omega)}$ as $h_{\rm max} \rightarrow 0$ for $-m \le s < m+1/2$.} \label{fig:rates}
\end{figure}
\end{center}
\begin{thm}\label{thm:new} 
Given $F\in H_0^{   \max\{ 0, - s \}  } (\Omega) $ 
 for $-m \le s \le m+1/2$, let $u \in V$ solve \eqref{eq:weakabstract} and let $u_\nc \in V_\nc$ solve \eqref{eq:modified_discrete}.  Suppose  $\sigma <m$,  $ t:= \min\{2 \sigma, m+\sigma -s\} $, and that  for a sequence  of quasi-uniform shape-regular triangulations
\begin{equation}\label{eq:starr}
 h_{\max}^{\sigma} \lesssim \trinl u - u_\nc \trinr_\pw 
 \end{equation}
holds. Then $\|u - J u_\nc \|_{H^s(\Omega)} \approx h_{\max}^t$ as $h_{\max}\to 0$.
\end{thm}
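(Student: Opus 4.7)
My plan is to establish $\|u - J u_\nc\|_{H^s(\Omega)} \lesssim h_{\max}^t$ and $\|u - J u_\nc\|_{H^s(\Omega)} \gtrsim h_{\max}^t$ separately. The hypothesis $F \in H_0^{\max\{0,-s\}}(\Omega)$ together with elliptic regularity gives $u \in V \cap H^{m+\sigma}(\Omega)$, so that the standard interpolation estimate $\trinl u - I_\nc u\trinr_\pw \lesssim h_{\max}^\sigma$ combined with Theorem~\ref{thm:energy_norm} yields the energy a priori bound $\trinl u - u_\nc\trinr_\pw \lesssim h_{\max}^\sigma$.

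\textbf{Upper bound.} In the central range $m - \sigma \le s \le m$, Theorem~\ref{thm:aux1}(a) gives
\[
\|u - J u_\nc\|_{H^s(\Omega)} \lesssim h_{\max}^{m-s} \trinl u - u_\nc\trinr_\pw \lesssim h_{\max}^{m+\sigma-s}.
\]
For $-m \le s \le m - \sigma$, the continuous inclusion $H^{m-\sigma}(\Omega) \hookrightarrow H^s(\Omega)$ reduces matters to the endpoint $s = m - \sigma$ and yields the saturated rate $h_{\max}^{2\sigma}$. For $m < s < m + 1/2$, I would insert a higher-order conforming interpolant $v_h \in V \cap P_k(\T)$ (with $k$ sufficiently large so that $\|u - v_h\|_{H^s(\Omega)} \lesssim h_{\max}^{m+\sigma-s}$), split $\|u - J u_\nc\|_{H^s} \le \|u - v_h\|_{H^s} + \|v_h - J u_\nc\|_{H^s}$, and control the piecewise polynomial $v_h - J u_\nc$ via a local inverse estimate $\|w\|_{H^s(T)} \lesssim h_T^{m-s}\|w\|_{H^m(T)}$ together with the energy bound $\trinl v_h - J u_\nc\trinr \lesssim h_{\max}^\sigma$.

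\textbf{Lower bound.} The generic hypothesis~\eqref{eq:starr} is propagated to $H^s$-norms by a Sobolev log-convexity inequality applied to $e := u - J u_\nc$: for $s \le m \le s_+$,
\[
\|e\|_{H^m(\Omega)} \lesssim \|e\|_{H^s(\Omega)}^\theta \, \|e\|_{H^{s_+}(\T)}^{1-\theta}, \quad \theta = \frac{s_+ - m}{s_+ - s}.
\]
Taking $s_+$ below the Sobolev smoothness threshold of the piecewise polynomial $J u_\nc$, the factor $\|e\|_{H^{s_+}(\T)}$ is controlled by $\|u\|_{H^{m+\sigma}}$ and a piecewise inverse bound on $J u_\nc$ with a mild (if any) $h_{\max}^{-1}$-blow-up that is absorbed in the final rate. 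Combined with the baseline energy lower bound $\|e\|_{H^m(\Omega)} = \trinl u - J u_\nc\trinr \gtrsim h_{\max}^\sigma$ (addressed below), solving for $\|e\|_{H^s}$ yields $h_{\max}^{m+\sigma-s}$ in the range $m - \sigma \le s \le m$; for $s \le m - \sigma$ the saturation at $h_{\max}^{2\sigma}$ follows by applying the same argument at the endpoint $s = m - \sigma$.

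\textbf{Main obstacle.} The delicate step is the baseline inequality $\trinl u - J u_\nc\trinr \gtrsim h_{\max}^\sigma$. Since $\Lambda_{\jc} \ge 1$ in general, a naive triangle inequality does not suffice to deduce the conforming lower bound from the piecewise-norm hypothesis~\eqref{eq:starr}. The remedy combines Pythagoras~\eqref{eq:Pythogoras} for $v = u - J u_\nc \in V$ and $w_\nc = 0$, which gives $\trinl u - J u_\nc\trinr \ge \trinl I_\nc(u - J u_\nc)\trinr_\pw = \trinl e_\nc\trinr_\pw$ with $e_\nc := I_\nc u - u_\nc$, the Pythagoras identity $\trinl u - u_\nc\trinr_\pw^2 = \trinl u - I_\nc u\trinr_\pw^2 + \trinl e_\nc\trinr_\pw^2$, and crucially the $L^2$-orthogonality~\eqref{eq:companion_orthogonality} which forces $J u_\nc - u_\nc$ to be a higher-order $L^2$-perturbation, so that $\trinl e_\nc\trinr_\pw$ must carry the generic portion of the piecewise energy error guaranteed by~\eqref{eq:starr}.
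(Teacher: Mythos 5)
Your upper-bound plan for $s\le m$ matches the paper (Theorem~\ref{thm:aux1} plus Sobolev embedding); your alternative for $m<s<m+1/2$ via a higher-order conforming interpolant and local inverse estimates differs from the paper, which instead proves the global inverse estimate $\|u-Ju_\nc\|_{H^{m+\varepsilon}(\Omega)}\lesssim h_{\max}^{\sigma-\varepsilon}|u|_{H^{m+\sigma}(\Omega)}$ directly (Lemma~\ref{lem:sharp2}) from the Sobolev--Slobodeckii structure and the piecewise-polynomial nature of $Ju_\nc\in V$. That alternative could be made to work, but it is not what the paper does.

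The lower bound is where there is a genuine gap. Your ``main obstacle'' remedy does not hold up: from the Pythagoras identity $\trinl u-u_\nc\trinr_\pw^2=\trinl u-I_\nc u\trinr_\pw^2+\trinl e_\nc\trinr_\pw^2$ and~\eqref{eq:starr} one gets $h_{\max}^{2\sigma}\lesssim\trinl u-I_\nc u\trinr_\pw^2+\trinl e_\nc\trinr_\pw^2$, but since $\trinl u-I_\nc u\trinr_\pw\lesssim h_{\max}^\sigma$ as well, the hypothesis is consistent with $\trinl e_\nc\trinr_\pw$ being of higher order or even zero (e.g.\ $u_\nc=I_\nc u$). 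The $L^2$-orthogonality~\eqref{eq:companion_orthogonality} concerns $L^2$, not the $H^m$ energy seminorm, and places no lower bound whatsoever on $\trinl e_\nc\trinr_\pw$, so invoking it is not a valid step; consequently your baseline $\trinl u-Ju_\nc\trinr\gtrsim h_{\max}^\sigma$ is unestablished, and every part of your lower-bound argument that rests on it is unsupported. The key idea you are missing is the duality identity of Lemma~\ref{sec:sharp_one}: testing~\eqref{eq:modified_discrete} with $2I_\nc u-u_\nc$ and~\eqref{eq:weakabstract} with $u$ gives
\begin{equation*}
\trinl u-u_\nc\trinr_\pw^2 = F(Ju_\nc-u)+2F(u-JI_\nc u).
\end{equation*}
The second term is bounded by $\kappa_m(1+\Lambda_{\rm J})h_{\max}^m\|F\|\,\trinl u-I_\nc u\trinr_\pw\lesssim h_{\max}^{m+\sigma}$, which is strictly higher order than $h_{\max}^{2\sigma}$ precisely because $\sigma<m$; the first term is controlled by $\|F\|_{H^{-s}(\Omega)}\|u-Ju_\nc\|_{H^s(\Omega)}$. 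Thus~\eqref{eq:starr} directly yields $h_{\max}^{2\sigma}\lesssim\|u-Ju_\nc\|_{H^s(\Omega)}$ for $-m\le s\le m-\sigma$ \emph{without} ever passing through an energy-norm lower bound on $u-Ju_\nc$. The remaining ranges $m-\sigma\le s<m+1/2$ then follow by interpolation against Lemma~\ref{lem:sharp2}; this is the part where a log-convexity argument of the kind you propose enters, but it must be anchored by the duality-based lower bound at $s=m-\sigma$, not by a baseline energy estimate derived from Pythagoras and~\eqref{eq:companion_orthogonality}.
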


The proof of the theorem for $-m \le s \le m-\sigma$ rests on  the subsequent lemma.

\begin{lem} \label{sec:sharp_one} 
For $F\in H_0^{   \max\{ 0, - s \}  } (\Omega) $ 
and $-m \le s \le m- \sigma$, the exact solution $u \in V$ to \eqref{eq:weakabstract} and the discrete solution $u_\nc \in V_\nc$ to \eqref{eq:modified_discrete} satisfy 
\[ \trinl u- u_\nc\trinr_\pw^2  \le 2 \kappa_m h^m_{\max} (1+\Lambda_{\rm J}) \|F\| \trinl u - u_\nc \trinr_\pw + \|F\|_{H^{-s}(\Omega)}   \|u- Ju_\nc\|_{H^s(\Omega)}. \]
\end{lem}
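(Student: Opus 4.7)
The plan is to prove the inequality via an exact identity whose right-hand side consists of one duality pairing in the weaker Sobolev norm plus two $L^2$-small corrections. To derive it, I first expand
\[
\trinl u-u_\nc\trinr_\pw^2 = a_\pw(u,u) - 2\,a_\pw(u_\nc,u) + a_\pw(u_\nc,u_\nc)
\]
and rewrite each summand: \eqref{eq:weakabstract} with $v = u$ gives $a(u,u) = F(u)$; \eqref{eq:modified_discrete} with $v_\nc = u_\nc$ gives $a_\pw(u_\nc,u_\nc) = F(J u_\nc)$; and \eqref{eq:best_approx} reduces $a_\pw(u_\nc, u)$ to $a_\pw(u_\nc, I_\nc u) = F(J I_\nc u)$ (the last equality by \eqref{eq:modified_discrete} with $v_\nc = I_\nc u$). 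Hence $\trinl u - u_\nc \trinr_\pw^2 = F(u + J u_\nc - 2 J I_\nc u)$, and a direct expansion verifies the rearrangement
\[
u + J u_\nc - 2 J I_\nc u = -(u - J u_\nc) - 2(J - 1) I_\nc u + 2 (u - I_\nc u),
\]
which converts this into the working identity
\[
\trinl u - u_\nc \trinr_\pw^2 = -F(u - J u_\nc) - 2\,F((J-1)I_\nc u) + 2\,F(u - I_\nc u).
\]

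Next I would bound the three terms. Duality of $H^s(\Omega)$ and $H^{-s}(\Omega)$, valid since $u - J u_\nc \in H^m_0(\Omega) \hookrightarrow H^s_0(\Omega)$ for $s \le m$ and $F \in H_0^{\max\{0,-s\}}(\Omega) \subset H^{-s}(\Omega)$ by hypothesis, delivers $|F(u - J u_\nc)| \le \|F\|_{H^{-s}(\Omega)} \|u - J u_\nc\|_{H^s(\Omega)}$, which matches the second term on the asserted right-hand side. For the other two contributions the pivotal observation is that both arguments lie in the kernel of $I_\nc$: trivially for $u - I_\nc u$, and for $(J-1) I_\nc u = (1-I_\nc)(J I_\nc u)$ because $I_\nc J I_\nc u = I_\nc u$ by \eqref{eq:right_inverse}. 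The interpolation estimate \eqref{eq:interpolation_estimate} applied to $J I_\nc u \in V$ (respectively $u \in V$), combined with the stability \eqref{eq:companion_estimate} (with $v_\nc = I_\nc u$, $v = u$) and the Pythagoras identity \eqref{eq:Pythogoras} (which gives $\trinl I_\nc u - u\trinr_\pw \le \trinl u - u_\nc\trinr_\pw$), yields
\[
\|(J - 1)I_\nc u\| \le \kappa_m h_{\max}^m \Lambda_{\rm J} \trinl u - u_\nc\trinr_\pw \quad\text{and}\quad \|u - I_\nc u\| \le \kappa_m h_{\max}^m \trinl u - u_\nc\trinr_\pw.
\]
Cauchy-Schwarz against $F \in L^2(\Omega)$ then bounds the second and third $F$-contributions by $2\kappa_m h_{\max}^m \Lambda_{\rm J} \|F\| \trinl u - u_\nc\trinr_\pw$ and $2\kappa_m h_{\max}^m \|F\| \trinl u - u_\nc\trinr_\pw$, respectively, and summation concludes the proof.

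The main obstacle is the algebraic decomposition. A naive route along the lines of the proof of Theorem~\ref{thm:energy_norm} produces a single term $-2\,F(J(I_\nc u - u_\nc))$, but $J(I_\nc u - u_\nc)$ has $L^2$-norm of order one and admits no direct control by $h_{\max}$. The rearrangement above is designed so that exactly the $V$-component $u - J u_\nc$ is peeled off to pair with $F$ in the weaker Sobolev norm, while the other two pieces sit in the kernel of $I_\nc$ and therefore inherit the extra factor $\kappa_m h_{\max}^m$ through \eqref{eq:interpolation_estimate}.
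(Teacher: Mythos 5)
Your proof is correct and uses essentially the same mechanism as the paper's. You derive the same key identity $\trinl u-u_\nc\trinr_\pw^2 = F(u+Ju_\nc-2JI_\nc u)$ by testing \eqref{eq:weakabstract} with $u$ and \eqref{eq:modified_discrete} with $u_\nc$ and $I_\nc u$; the paper then groups the right-hand side as $F(Ju_\nc-u)+2F(u-JI_\nc u)$ and controls the second piece via a single triangle inequality $\trinl u-JI_\nc u\trinr_\pw\le(1+\Lambda_{\rm J})\trinl u-I_\nc u\trinr_\pw$, whereas you split $2F(u-JI_\nc u)$ further into $2F(u-I_\nc u)-2F((J-1)I_\nc u)$ and hit each piece with \eqref{eq:interpolation_estimate} directly (both arguments lying in $\ker I_\nc$); the two routes are algebraically equivalent and give the identical constant.
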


\begin{proof}
The weak solution $u$ to $(-1)^m \Delta^m u =F$ satisfies $a(u,u)=F(u)$. The discrete solution $u_\nc$ to \eqref{eq:modified_discrete} 
satisfies $a_\pw(u_\nc, 2 I_\nc u - u_\nc) = F(2 JI_\nc u - J u_\nc)$. Consequently, 
\[ \trinl u - u_\nc \trinr^2_\pw = F(Ju_\nc - u) + 2F( u - J I_\nc u). \]
The first term on the right-hand side of the above expression  is $\le \|F\|_{H^{-s}(\Omega)} \|u - J u_\nc\|_{H^s(\Omega)}$  while the second term, for $F \in L^2(\Omega) $, is twice
\begin{align*}
F(  u - J I_\nc u) & \le \kappa_m h^m_{\max} \|F\| \trinl u - J I_\nc u \trinr_{\pw} \\
& \le \kappa_m (1+ \Lambda_{\rm J}) h^m_{\max} \|F\|  \trinl u - I_\nc u \trinr_{\pw}
\end{align*}
with {\eqref{eq:interpolation_estimate} in the first step and a triangle inequality plus \eqref{eq:companion_estimate}} in the last step. 
A combination of the estimates concludes the proof.
\end{proof}

\begin{proof}[Proof of Theorem~\ref{thm:new} for $-m \le s \le m- \sigma$] 
Suppose that $\Omega$ is a (possibly nonconvex) polygon with elliptic regularity 
index $1/2 <\sigma \le 1$ and $\T$ is quasi-uniform with mesh-size $h_{\max}$. 
The estimate $\trinl u - u_\nc \trinr_\pw \lesssim h_{\max}^{\sigma}$ follows from Theorem \ref{thm:energy_norm} and the interpolation approximation estimate $\trinl u - I_\nc u \trinr_\pw \lesssim h_{\max}^{\sigma}$ for $u \in H^{m+\sigma}(\Omega)$. 
The additional assumption  \eqref{eq:starr}  is that the convergence rate is not better than $\sigma$. This and Lemma \ref{sec:sharp_one} imply
\[ 
\constant{} \label{con:lemsharp} h_{\max}^{2 \sigma} \le \|u - J u_\nc\|_{H^s(\Omega)} + h_{\max}^{m + \sigma} 
\]
for a positive constant $\constant[\ref{con:lemsharp}] \approx 1$. In case $m=1=\sigma$, this estimate is inconclusive. 
But otherwise $2 \sigma < m + \sigma$ (e.g. for $m= 2$ or $\sigma<1$) and 
\(
h_{\max}^{2 \sigma} (\constant[\ref{con:lemsharp}]  -  h_{\max}^{m -\sigma} ) \le \|u - J u_\nc\|_{H^s(\Omega)} 
\)
shows, at least for sufficiently small $h_{\max} \ll 1$, that  $h_{\max}^{2 \sigma}  \lesssim \|u - J u_\nc\|_{H^s(\Omega)} $. The 
Sobolev embedding for   $ s \le m- \sigma$ shows $\|u - J u_\nc\|_{H^s(\Omega)} \lesssim \|u - J u_\nc\|_{H^{m-\sigma}(\Omega)}$.
Theorem~\ref{thm:aux1}  and $\trinl u - u_\nc \trinr_\pw \lesssim h_{\max}^{\sigma}$ imply 
 $\|u - J u_\nc\|_{H^{m-\sigma}(\Omega)} \lesssim h_{\max}^{2 \sigma} $. The combination of all those estimates concludes the proof for 
 $-m \le s \le m- \sigma$.
\end{proof}

Suppose $\T$ is quasi-uniform and $0 <\varepsilon < 1/2 $   throughout the remaining parts of this subsection.
A quasi-uniform mesh allows inverse estimates even in the Sobolev-Slobodeckii semi-norm.  Throughout the remaining part of this subsection, the generic factors hidden in $\lesssim$ may also depend on $\varepsilon$.
\begin{lem}[inverse estimate] \label{lem:sharp2} 
It holds 
\(
  \varepsilon^{1/2} \|u - J u_\nc \|_{H^{m+\varepsilon}(\Omega)} \lesssim h_{\max}^{\sigma-\varepsilon} |u|_{H^{m+\sigma}(\Omega)}.
\)
\end{lem}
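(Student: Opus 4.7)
The plan is to split $u-Ju_\nc$ via a conforming quasi-interpolation $I_{\mathrm c}:V\to V_{\mathrm c}$ onto the conforming companion space $V_{\mathrm c}\supset J V_\nc$ (for instance Scott--Zhang for $m=1$, or a $C^1$ variant on $\mathrm{HCT}(\T)$ for $m=2$) and to treat the two resulting parts in different ways. Concretely, write $u-Ju_\nc=(u-I_{\mathrm c}u)+w_{\mathrm c}$ with $w_{\mathrm c}:=I_{\mathrm c}u-Ju_\nc\in V_{\mathrm c}$; the first summand is an approximation error of the smooth function $u\in H^{m+\sigma}(\Omega)$, the second is an $H^m$-conforming piecewise polynomial on the quasi-uniform mesh.

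For the approximation part, elliptic regularity together with a standard Bramble--Hilbert argument on the quasi-uniform $\T$ yields the fractional estimate $\|u-I_{\mathrm c}u\|_{H^{m+\varepsilon}(\Omega)}\lesssim h_{\max}^{\sigma-\varepsilon}|u|_{H^{m+\sigma}(\Omega)}$ (with a constant possibly depending on $\varepsilon$, which is admissible here). For $w_{\mathrm c}$ I would establish the fractional inverse estimate
\[
\varepsilon^{1/2}\|v_{\mathrm c}\|_{H^{m+\varepsilon}(\Omega)}\lesssim h_{\max}^{-\varepsilon}\trinl v_{\mathrm c}\trinr\quad\text{for all }v_{\mathrm c}\in V_{\mathrm c},
\]
and bound the energy-norm factor with a triangle inequality, $H^m$-stability of $I_{\mathrm c}$, the best-approximation Theorem~\ref{thm:energy_norm}, and \eqref{eq:companion_estimate}, giving $\trinl w_{\mathrm c}\trinr\le\trinl u-I_{\mathrm c}u\trinr+\trinl u-Ju_\nc\trinr\lesssim h_{\max}^{\sigma}|u|_{H^{m+\sigma}(\Omega)}$. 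A final triangle inequality in $H^{m+\varepsilon}(\Omega)$ then combines all estimates to deliver the asserted inequality.

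The main obstacle is the fractional inverse estimate on $V_{\mathrm c}$ with the explicit $\varepsilon^{1/2}$ factor. To prove it, I would decompose the Sobolev--Slobodeckii double integral in \eqref{eq:sobslobo} for each $\partial^\beta v_{\mathrm c}$, $|\beta|=m$, into diagonal contributions $\sum_T\int_T\int_T$ and off-diagonal contributions $\sum_{T\ne T'}\int_T\int_{T'}$. The diagonal part is handled by scaling to a reference triangle and a classical polynomial inverse estimate, producing $h_{\max}^{-2\varepsilon}\|\partial^\beta v_{\mathrm c}\|_{L^2(T)}^2$. For the off-diagonal part the $H^m$-conformity $V_{\mathrm c}\subset V$ eliminates jumps of $\partial^\alpha v_{\mathrm c}$ with $|\alpha|<m$ across interior edges, so only piecewise $L^\infty$-bounds on $\partial^\beta v_{\mathrm c}$ enter, and the geometric integral $\int_T\int_{T'}|x-y|^{-(2+2\varepsilon)}\,\mathrm dx\,\mathrm dy$ contributes a factor of order $\varepsilon^{-1}$ as $\varepsilon\to 0^+$. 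Tracking this singularity explicitly and moving $\varepsilon^{1/2}$ to the left-hand side yields the sharp form used above; this $\varepsilon^{-1}$ is precisely what is visible on the left of the lemma.
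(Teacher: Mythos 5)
Your argument is correct in outline and genuinely different from the paper's, but it hinges on an object the paper never introduces and does not obviously have for free. The paper never uses a conforming quasi\mbox{-}interpolant $I_{\rm c}:V\to V_{\rm c}$. Instead it works derivative\mbox{-}wise: setting $f:=\partial^\alpha u$ and $g:=\partial^\alpha Ju_\nc$ for $|\alpha|=m$, it splits $|f-g|_{H^\varepsilon(\Omega)}\le|f-\Pi_0 f|_{H^\varepsilon(\Omega)}+|\Pi_0f-g|_{H^\varepsilon(\Omega)}$. The first summand is handled by a fractional Poincar\'e inequality $\|f-\Pi_0f\|_{H^\varepsilon(\Omega)}\lesssim\varepsilon^{-1/2}h_{\max}^{\sigma-\varepsilon}|f|_{H^\sigma(\Omega)}$, and the second by the polynomial inverse estimate $\|p_r\|_{H^\varepsilon(\Omega)}\lesssim\varepsilon^{-1/2}\|h_\T^{-\varepsilon}p_r\|$ together with the crucial identity $\Pi_0 D^m u=D^m_\pw I_\nc u$ from \eqref{eq:L2ortho}, which lets $\|\Pi_0f-g\|$ be rewritten as $\trinl I_\nc u-Ju_\nc\trinr_\pw$ and then bounded via \eqref{eq:companion_estimate} and Theorem~\ref{thm:energy_norm} exactly as in your $\trinl w_{\rm c}\trinr$ bound. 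What your route buys is transparency (a clean function\mbox{-}level triangle inequality); what it costs is that you must build and analyse a new operator $I_{\rm c}$ with $H^m$\mbox{-}conforming range, $H^m$\mbox{-}stability, and the fractional rate $\|u-I_{\rm c}u\|_{H^{m+\varepsilon}(\Omega)}\lesssim h_{\max}^{\sigma-\varepsilon}|u|_{H^{m+\sigma}(\Omega)}$. For $m=2$ this is not a ``standard Bramble--Hilbert'' consequence: the natural HCT degrees of freedom involve point values of $\nabla u$ at vertices, which do not exist for $u\in H^{2+\sigma}(\Omega)$ with $\sigma<1$, so you are forced to an averaged Cl\'ement/Scott--Zhang\mbox{-}type $C^1$ construction, and the fractional\mbox{-}order estimate on the quasi\mbox{-}uniform mesh then needs a localisation of the Sobolev--Slobodeckii double integral of precisely the kind the paper proves in its $\Pi_0$ lemma. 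In effect you have not avoided the technical core, you have relocated it into $I_{\rm c}$. Finally, the remark that $H^m$\mbox{-}conformity ``eliminates jumps of $\partial^\alpha v_{\rm c}$ with $|\alpha|<m$'' is a red herring: the inverse estimate you need concerns $\partial^\beta v_{\rm c}$ with $|\beta|=m$, which does jump, and the same estimate is applied in the paper to the genuinely discontinuous field $\Pi_0f-g$, so conformity of $V_{\rm c}$ plays no role there.
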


\begin{proof}
The best-approximation property leads to  $\|u- Ju_\nc\|_{H^m(\Omega)}  \lesssim h^{\sigma}_{\max} |u|_{H^{m+\sigma}(\Omega)} $ and controls the 
low-order contributions. Hence the proof focusses on the Sobolev-Slobodeckii semi-norm of each partial-dertivative $\partial^\alpha$ of order $|\alpha|=m$. 
Let $f:= \partial^\alpha u$, $g:= \partial^\alpha Ju_\nc$. The triangle inequality motivates the split
\[ 
|f-g|_{H^{\varepsilon}(\Omega)} \le |f-\Pi_0 f|_{H^{\varepsilon}(\Omega)} + |\Pi_0 f -g|_{H^{\varepsilon}(\Omega)}.
\]
The first term is controlled by a Poincare inequality for the Sobolev-Slobodeckii semi-norm 
\[\|f- \Pi_0 f\|_{H^{\varepsilon}(\Omega)}  \lesssim \varepsilon^{-1/2} h_{\max}^{s-\varepsilon} |f|_{H^s(\Omega)}.\]
(This is a standard estimate for the reference triangle e.g.~by interpolation and a scaling argument completes the proof with a generic constant that may depend on $\varepsilon$. The
claimed multiplicative constant $\approx \varepsilon^{-1/2}$ follows from a refined estimation \cite{book_nncc}.)
The second term is  an inverse estimate for $Ju_\nc \in V$ that 
is a piecewise polynomial (e.g.~with respect to a HCT refinement of $\T$)  with mesh-sizes equivalent to $h_\T$. 
Since  $0 <\varepsilon < 1/2$, any piecewise polynomial $p_r \in P_r(\T)$ of a degree at most  $r \in {\mathbb N}_0$ belongs to
$H^{\varepsilon}(\Omega)$ and the inverse estimate 
$\|p_r \|_{H^{\varepsilon}(\Omega)} \lesssim  \varepsilon^{-1/2} \|h_\T^{-\varepsilon} p_r \|_{L^2(\Omega)}$ holds  \cite{DFGHS} for a quasi-uniform mesh
(the multiplicative constant $\approx \varepsilon^{-1/2}$ follows from a refined estimation \cite{book_nncc}). Since this is available for the above
piecewise polynomial $\Pi_0 f -g$, it implies
 \[
  \varepsilon^{1/2}  |\Pi_0 f -g|_{H^{\varepsilon}(\Omega)} \lesssim \|h_{\T}^{-\varepsilon} (\Pi_0 f -g) \| \lesssim { h_{\rm min}^{-\varepsilon}} \trinl Ju_\nc -I_\nc u \trinr_\pw
  \]
for the minimal mesh-size $h_{\min}:= \min_{T \in \T} h_T$ in $\T$ and $h_{\min} \approx h_{\max}$ for a  quasi-uniform triangulation. A triangle inequality with \eqref{eq:Pythogoras}, \eqref{eq:companion_estimate}, and Theorem \ref{thm:energy_norm}  lead to 
\begin{align*}
\trinl I_\nc u - J u_\nc  \trinr_{\pw} & \le \trinl  I_\nc u - u_\nc \trinr_{\pw} + \trinl  u_\nc - J u_\nc  \trinr \\
&  \le C_{\qo}(1+\Lambda_{\rm J}) \trinl  u - I_\nc u \trinr_{\pw} \lesssim h_{\max}^{\sigma} |u|_{H^{m+\sigma}(\Omega)}.
\end{align*}
The combination of the above estimates for each derivative $f= \partial^{\alpha} u$ of order $m=|\alpha|$ shows
\( \varepsilon^{1/2} \|u- J u_\nc\|_{H^{m+\varepsilon}(\Omega)} \lesssim  h_{\max}^{\sigma-\varepsilon} |u|_{H^{m+\sigma}(\Omega)} \).
This concludes the proof for a quasi-uniform triangulation.
\end{proof}

\begin{lem}[interpolation]\label{lemma interpolation} 
If~$0 < s<m$ and $0 <\varepsilon < 1/2 $, then \\
\mbox{}\hfil \( \| u- J u_\nc \|_{H^m(\Omega)}^{s+\varepsilon} \lesssim h_{\max}^{s(\sigma-\varepsilon)}  |u|_{H^{m+\sigma}(\Omega)}^s \|u- J u_\nc\|^{\varepsilon}_{H^{m-s}(\Omega)}\).
\end{lem}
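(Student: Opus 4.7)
The plan is to apply a standard Sobolev interpolation inequality to the function $w:= u-Ju_\nc$ between the endpoint norms $H^{m-s}(\Omega)$ and $H^{m+\varepsilon}(\Omega)$, and then absorb the high-order factor using Lemma~\ref{lem:sharp2}. First I would verify that $w$ lies in the relevant spaces: $u \in H^{m+\sigma}(\Omega)$ by elliptic regularity, while $Ju_\nc$ is piecewise polynomial on a (HCT-type) refinement of $\T$ and lies in $V = H^m_0(\Omega)$; since $0<\varepsilon<1/2$, the piecewise polynomial $Ju_\nc$ actually belongs to $H^{m+\varepsilon}(\Omega)$, and therefore so does $w$.

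The key step is the interpolation inequality: for $a<b$ and $m=(1-\theta)a+\theta b$ with $\theta\in(0,1)$ one has
\[
\|w\|_{H^m(\Omega)} \lesssim \|w\|_{H^a(\Omega)}^{1-\theta}\,\|w\|_{H^b(\Omega)}^{\theta}.
\]
Choosing $a=m-s$ and $b=m+\varepsilon$ forces
\[
\theta=\frac{s}{s+\varepsilon},\qquad 1-\theta=\frac{\varepsilon}{s+\varepsilon},
\]
so that
\[
\|w\|_{H^m(\Omega)} \lesssim \|w\|_{H^{m-s}(\Omega)}^{\varepsilon/(s+\varepsilon)}\,\|w\|_{H^{m+\varepsilon}(\Omega)}^{s/(s+\varepsilon)}.
\]
Raising both sides to the power $s+\varepsilon$ gives
\[
\|w\|_{H^m(\Omega)}^{s+\varepsilon} \lesssim \|w\|_{H^{m-s}(\Omega)}^{\varepsilon}\,\|w\|_{H^{m+\varepsilon}(\Omega)}^{s}.
\]

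It remains to replace the factor $\|w\|_{H^{m+\varepsilon}(\Omega)}^{s}$ by the claimed powers of $h_{\max}$ and $|u|_{H^{m+\sigma}(\Omega)}$. This is precisely what Lemma~\ref{lem:sharp2} provides: it yields
\[
\|w\|_{H^{m+\varepsilon}(\Omega)} \lesssim h_{\max}^{\sigma-\varepsilon}\,|u|_{H^{m+\sigma}(\Omega)}
\]
(the $\varepsilon^{-1/2}$ factor is absorbed into the generic constant that is allowed to depend on $\varepsilon$), so raising to the $s$-th power and combining with the previous display proves the claim.

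The only genuine subtlety is the justification of the interpolation inequality on a polygonal Lipschitz domain $\Omega$; the standard approach is to obtain it on $\mathbb{R}^2$ via the Fourier characterization of $H^t$-norms and transfer it to $\Omega$ by a Stein-type bounded extension operator together with the usual real-interpolation identities $[H^a(\Omega),H^b(\Omega)]_{\theta}=H^{m}(\Omega)$ (valid for the non-integer scale encountered here). All the remaining estimates are bookkeeping that will fit within the generic constants.
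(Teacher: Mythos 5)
Your proof is correct and follows the same route as the paper: interpolate $H^m(\Omega)$ between $H^{m-s}(\Omega)$ and $H^{m+\varepsilon}(\Omega)$, then use Lemma~\ref{lem:sharp2} to bound the high-order factor. One small remark: the paper's proof writes the interpolation inequality as $\|e\|_{H^m(\Omega)} \lesssim \|e\|^{s/(s+\varepsilon)}_{H^{m-s}(\Omega)}\|e\|^{\varepsilon/(s+\varepsilon)}_{H^{m+\varepsilon}(\Omega)}$, which has the exponents swapped (a typo); with $\theta=s/(s+\varepsilon)$ one gets $\|e\|_{H^m}\lesssim \|e\|_{H^{m-s}}^{\varepsilon/(s+\varepsilon)}\|e\|_{H^{m+\varepsilon}}^{s/(s+\varepsilon)}$ as you wrote, and indeed only your version of the exponents is consistent with the stated conclusion of the lemma.
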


\begin{proof} 
The interpolation \cite{tartar2010introduction} of $H^m(\Omega)$ between $H^{m-s}(\Omega)$ and
$H^{m+\varepsilon}(\Omega)$ for $e:=u- J u_\nc  \in V \cap H^{m+\varepsilon}(\Omega)$  leads to
$\|e\|_{H^m(\Omega)} \lesssim \|e\|^{s/(s+\varepsilon)}_{H^{m-s}(\Omega)}
\|e\|^{\varepsilon/(s+\varepsilon)}_{H^{m+\varepsilon}(\Omega)}.$
Lemma~\ref{lem:sharp2}  reveals  $\|e\|_{H^{m+\varepsilon}(\Omega)} \lesssim  h_{\max}^{\sigma-\varepsilon} |u|_{H^{m+\sigma}(\Omega)}$
and  concludes the proof.
\end{proof}

\begin{proof}[Proof of Theorem~\ref{thm:new} for $m- \sigma\le s \le m$] 
The upper bound for the error $e:=u- J u_\nc  \in V $ 
 in the norm of  $H^s(\Omega)$ follows from  Theorem~\ref{thm:aux1}.
The assumption   \eqref{eq:starr} and Lemma~\ref{lemma interpolation} reveal 
$h_{\max}^{\sigma+m-s}\lesssim   |u|_{H^{m+\sigma}(\Omega)}^{s/(\sigma+\varepsilon)} \|e\|_{H^{s}(\Omega)}$. 
This concludes the proof for  $m- \sigma\le s \le m$.
\end{proof}

\begin{proof}[Proof of Theorem~\ref{thm:new} for $m\le s < m+1/2$] 
The upper bounds for quasi-uniform meshes follow from Lemma \ref{lem:sharp2}.
The lower bounds are derived by interpolation for  $e:=u- J u_\nc  \in V \cap H^{m+\varepsilon}(\Omega)$.  For the convex coefficients 
$\theta={\varepsilon}/({\sigma+\varepsilon})$ and $1-\theta={\sigma}/({\sigma+\varepsilon})$ in 
$m=\theta(m-\sigma) +(1-\theta)(m+\varepsilon)$, an  interpolation \cite{tartar2010introduction} shows 
\[ 
\trinl e \trinr \le C(\varepsilon)  \| e \|_{H^{m-\sigma}(\Omega)}^{\theta} \|e\|_{H^{m+\varepsilon}(\Omega)}^{1-\theta}
\]
for some constant $C(\varepsilon)$ that exclusively depends on $\varepsilon$ and $\Omega$. This and  Theorem~\ref{thm:aux1}
result in   $\trinl e \trinr^{\sigma+\varepsilon} \lesssim h_{\max}^{2\sigma} \varepsilon\|e\|_{H^{m+\varepsilon}(\Omega)}^{\sigma}$. 
Recall   \eqref{eq:starr} and derive  the asserted lower bound $ h_{\max}^{\sigma- \varepsilon} \lesssim \|e\|_{H^{m+\varepsilon}(\Omega)} $. 
This concludes the proof.
\end{proof}

\section{Comments and examples  of data in $H^{-{m}}(\Omega)$}\label{sec:comments}
\subsection{The space $H^{-m}(\Omega)$} 
\label{sec:characterization}
 The space $H^{-m}(\Omega)$ is the dual to $H^m_0(\Omega)$ and is a real Hilbert space. It is known that for any functional $F \in H^{-m}(\Omega)$,  there exist $f_{\alpha} \in L^2(\Omega)$ for all multi-indices $\alpha$ with $|\alpha| \le m$ such that 
 \begin{equation} \label{eq:formf}
 F(v) = \sum_{|\alpha| \le m} \int_{\Omega} f_{\alpha} \partial_{\alpha} v \dx \text{ for all } v
 \in H^m_0(\Omega)
 \end{equation}
(with multi-index notation $\alpha=(\alpha_1,\alpha_2,\cdots, \alpha_n) \in {\mathbb N}_0^n$ and
 $|\alpha| = \alpha_1+ \alpha_2 + \cdots + \alpha_n$ for the partial derivatives 
 $\partial_{\alpha} v:= \partial^{|\alpha|} v /(\partial^{\alpha_1} x_1 \cdots \partial^{\alpha_n} x_n)$ etc). 
 The family $(f_\alpha: |\alpha| \le m )$ of $L^2(\Omega)$ functions with \eqref{eq:formf} is {\em not} unique. For instance,
 given any ${F} \in L^2(\Omega)$ one may solve  {$(-1)^m \Delta^{m} u ={ F}$} for a weak solution $u \in H^m_0(\Omega)$ and 
 consider $G:=  D^m u \in L^2(\Omega; {\mathbb R}^{(n^m)})$ with 
 \[ F(v) = a(u, v) = \int_\Omega  G: D^m v \dx \text{ for all } v \in H^m_0(\Omega).
 \] 
Hence $
f_\alpha :=
\begin{cases}
F \text{ if } \alpha=(0,0,\cdots,0) \\
0  \text{ if } 1 \le |\alpha| \le m
\end{cases}
$
and 
$
f_\alpha :=
\begin{cases}
0 \text{ if } |\alpha| \le m-1 \\
{  G_\alpha}  \text{ if }  |\alpha| = m
\end{cases}
$ with  $G_\alpha=\partial_\alpha u$  define two sets of data $(f_\alpha: |\alpha| \le m )$ 
on the right-hand side of \eqref{eq:formf} for the same functional $F$. It is also known 
\cite[Lemma 15.9, p.78]{tartar2010introduction} or \cite[Thm.1.~in~5.9, p.253]{Evans} for $m=1$ that $\|F\|_{H^{-m}(\Omega)}$ 
is the infimum of $\sqrt{\sum_{|\alpha | \le m} \|f_\alpha\|^2_{L^2(\Omega)}}$ for all $(f_\alpha: |\alpha| \le m )$ with \eqref{eq:formf}.

\medskip \noindent
It cannot be overemphasized that the preprocessing of the data set $(f_\alpha: |\alpha| \le m )$ in \eqref{eq:formf}  is a subtle issue that essentially depends on the way the right-hand side $F$ is provided. For instance, point forces may apply, a right-hand side $f \in L^1(\Omega)$ for $m \ge 2$ and line integrals may be modeled for $m \ge 1$. The main idea will
be explained for a model example with   the right-hand side $F \in H^{-m}(\Omega)$, $\Omega \subset {\mathbb R}^2$  in \eqref{eq:formf}   of the form 
\begin{equation} \label{F:formata}
F(v):= \int_{\Omega} G: D^m v\dx + \int_{\Omega} gv \dx\quad\text{for all }v\in V=H_0^m(\Omega)
\end{equation}
for given  $G \in L^2(\Omega; {\mathbb R}^{2^m})$ and  $g \in L^2(\Omega)$.
Consider the vector space  $H({\rm div}^m, \Omega; {\mathbb R}^{2^m})$ of all $L^2$ functions $Q \in L^2(\Omega; {\mathbb R}^{2^m})$ 
such that the $m$-th  distributional divergence ${\rm div}^m Q \in L^2(\Omega)$ is a Lebesgue function: 
$ q\in L^2(\Omega)$ is called the $m$-th divergence of some $Q \in L^2(\Omega; {\mathbb R}^{2^m})$ if, for all $v  \in D(\Omega)$,
$\int_\Omega Q: D^m v \dx= (-1)^m \int_\Omega q v \dx.$
This holds for all $v \in H^m_0(\Omega)$ as well (by density of $D(\Omega)$ in $H^m_0(\Omega)$) and one writes $q=(-1)^m {\rm div}^m \: Q$. Hence any $Q$ and any $v \in H^m_0(\Omega)$ satisfy
\begin{equation}\label{modf}
F(v)=   \int_\Omega (G- Q) :D^m v \dx +  \int_\Omega (g+  {\rm div}^m \: Q) \: v \dx 
\end{equation} 
In other words, the data $G$ and $g$ of $F$ in \eqref{F:formata} can be modified to $G':= G- Q$ and
$g':= g+ {\rm div}^m \: Q $  without changing the action of $F$ on the continuous level.
This data preprocessing may reduce the data oscillations as they arise e.g. in the a~posteriori error control. 

\subsection{Point forces $(m=2)$}\label{sec:pointforces} The mechanical  model often involves point loads. The Dirac delta functional $\delta_z \in H^{-2}(\Omega)$ is the evaluation of the continuous test function $u \in V =H^2_0(\Omega) \hookrightarrow C({\overline \Omega})$ at a fixed point $z \in \overline{\Omega}$, called atom of the delta functional.   Unlike for $m=1$, $\delta_z: V \rightarrow {\mathbb R}$ is linear and bounded for $m \ge 2$. 
It is well known that (if $\langle \bullet, \bullet  \rangle$ abbreviates the duality brackets in $H^{-2}(\Omega) \times
H^{2}(\Omega) $ that extends the $L^2$ scalar product)
\[
 \langle  \delta_z,v \rangle =v(z) = \frac{1}{2 \pi} \int_\Omega \Delta v (x) \log |x-z| \dx\quad\text{for all }v\in V=H_0^m(\Omega)
. \]
(cf. e.g. \cite[Ex.1, page 82]{Folland} or \cite[Thm 1, page 23]{Evans} for further details and proofs.) Hence $\delta_z$ is represented by $G{(x)}:=\frac{1}{2 \pi} \log |x-z|  1_{2 \times 2}$ for $x \in \Omega$ with the $2 \times 2$ unit matrix $1_{2 \times 2}$ and $g \equiv 0$. Since $G \in L^2(\Omega; {\mathbb S})$, the results of this survey apply to this case as well, but the practical computation may avoid integration with the weakly singular function $\log |x-z|$ and just apply $\delta_z$ on the point evaluation. 
The point force  at $a \in A \subset \V(\Omega) $ does not contribute to the upper bound below  in Proposition~\ref{prop} when located at an interior  vertex of the triangulation. 
The model example will suppose this for simplicity and because it can easily arranged by appropriate triangulations in practice at least for a small number of point forces. 

Before the next subsection continues with the corresponding model example, the effect of a general location shall at least be illuminated.
 
\begin{example}[point force not located at a vertex] 
Suppose  that there is a point force at $z \in \Omega$ that is not correlated to the triangulation $\T$. The original Morley FEM has to model $\delta_z$ also in case 
$z \in \partial T_+ \cap \partial T_-=E$ is shared by  two triangles $T_+ $ and $T_-$. The test function $v_\M \in \M(\T)$ is (in general) discontinuous at $z$ and 
\begin{align}\label{fh}
F_h&:= \mu \delta_{T_+,z}+ (1-\mu) \delta_{T_-,z} 
\end{align}
applies with the piecewise point evaluation $\langle  \delta_{T_{\pm}, z }, v_\pw \rangle_{H^2(\T)}:= (v_\pw|_{T_{\pm}})(z)$ for all $v_\pw \in H^2(\T)$ 
and a convex coefficient $0 \le \mu \le 1$; $\mu =1/2$ appears  natural. 
The choice of $\mu $ does not affect the modified Morley FEM, but it will do so for the original scheme in general. 
 The a~priori error analysis of the best-approximation for the modified Morley FEM remains valid and the Proposition~\ref{prop} is easily modified. 
 A standard Bramble-Hilbert argument shows for $w \in H^2(T)$, $  T \in \T$, that  $w:= v_\M- Jv_\M $ satisfies
$$
\|w\|_{L^{\infty}(T)} \le  \constant{}\label{conj} h_T |w|_{H^2(T)} 
$$
(In fact, the $P_1$ FEM interpolation error analysis shows in the above displayed inequality for a left-hand side 
$|w|_{H^1(T)}$  based on $w \in H^2(T)$ with $w(z)=0$ for all vertices $z \in \V(T)$.
The $H^1$ seminorm and the  $L^{\infty}$ norm do not scale in 2D  \cite{Ciarlet} and this leads to the assertion.) 
It follows that the absolute value of 
\[F_h(v_\M)-F(Jv_\M)= \mu(w|_{T_+})(z)+ (1-\mu) (w|_{T_-})(z) \]
 is smaller than or equal to 
$ \mu \|w\|_{L^{\infty}(T_+) } + (1-\mu )\|w\|_{L^{\infty}(T_-) }
\le  \constant[\ref{conj}]  \left( \mu h_{T_+} +(1-\mu ) h_{T_-}  \right)  \trinl  { w} \trinr_\pw.$
This leads to the additional data-oscillation term $ \mu h_{T_+} +(1-\mu ) h_{T_-}$. 
\end{example}

\subsection{Model example}
The smoother $J$ in the right-hand side of the discrete problem \eqref{eq:modified_discrete}   leads to an a~priori best-approximation result for  
general data $F \in H^{-m}(\Omega)$. The a~posteriori error control requires data evaluation in explicit residual-based a~posteriori error estimates. 
Throughout this section the particular format of the right-hand side $F \in H^{-m}(\Omega)$ in \eqref{eq:weakabstract}  is assumed of the form 
\begin{equation} \label{F:format}
F(v):= \int_{\Omega} G: D^m v\dx + \int_{\Omega} gv \dx +\sum_{a \in A}  \beta(a) v(a) \text{ for all } v \in V
\end{equation}
for given  $G \in L^2(\Omega; {\mathbb R}^{(n^m)})$, $g \in L^2(\Omega)$, $A$ is a finite set of interior vertices, 
and $a \in A \subset \V(\Omega)$ is the position 
and  $\beta(a)$  the strength of the point force.  
The {\it natural right-hand side} $F_h:=\widehat{F}|_{V_{\nc}} \in V_\nc^*$ in the discrete problem for  \eqref{F:format} reads,
for all $\widehat{v} =v+v_\nc \in \widehat{V} \equiv V+V_\nc$,
\begin{equation} \label{Fhat:format}
\widehat{F}(\widehat{v}):= \int_{\Omega} G: D_\pw^m \widehat{v}\dx + \int_{\Omega} g \widehat{v} \dx  +\sum_{a \in A}  \beta(a) \widehat{v}(a). 
\end{equation}

\subsection{Comparison}\label{sec:comparison} The two schemes \eqref{eq:modified_discrete} and \eqref{eq:discrete} for $F_h = \widehat{F}|_{V_\nc}$ 
from \eqref{Fhat:format} are in fact different. While the modified scheme \eqref{eq:modified_discrete} with right-hand side $F \circ J$ 
enjoys a best-approximation property, the original version for the right-hand side $\widehat{F}|_{V_\nc}$ from \eqref{Fhat:format} does so only up to data-oscillations.

\begin{prop}\label{prop} Given  $F$ and ${\widehat F}$ in \eqref{F:format}-\eqref{Fhat:format}, let $u_\nc^{\rm org} \in V_\nc$ solve \eqref{eq:discrete}
and  let $u_\nc^{\rm mod}$ solve \eqref{eq:modified_discrete}. Then
\( \Lambda_0^{-1} \trinl u_\nc^{\rm org} - u_\nc^{\rm mod}\trinr_\pw \le \|G-\Pi_0 G\| +\kappa_m \osc_m(g,\T).\)
\end{prop}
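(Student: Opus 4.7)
The plan is to test both discrete equations with $e_\nc := u_\nc^{\rm org}-u_\nc^{\rm mod}\in V_\nc$ and exploit three separate cancellations that correspond to the three terms in \eqref{F:format}. Subtracting \eqref{eq:modified_discrete} from \eqref{eq:discrete} (with $F_h=\widehat F|_{V_\nc}$) gives, for every $v_\nc\in V_\nc$,
\begin{align*}
a_\pw(e_\nc,v_\nc)=\widehat F(v_\nc)-F(Jv_\nc)=\int_\Omega G:D^m_\pw(v_\nc-Jv_\nc)\dx+\int_\Omega g\,(v_\nc-Jv_\nc)\dx+\sum_{a\in A}\beta(a)\bigl(v_\nc(a)-(Jv_\nc)(a)\bigr).
\end{align*}
I then pick $v_\nc=e_\nc$ so the left-hand side becomes $\trinl e_\nc\trinr_\pw^2$, and bound each of the three contributions on the right separately.

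For the point-force sum, the key observation is that point evaluation at $a\in A\subset\V(\Omega)$ is only meaningful for $m=2$ (the Morley case), and the Morley interpolation preserves interior vertex values. Hence $I_\nc(Jv_\nc)=v_\nc$ from \eqref{eq:right_inverse} forces $(Jv_\nc)(a)=v_\nc(a)$ at every $a\in\V(\Omega)$, so the entire sum vanishes. For the $G$-term, I apply \eqref{eq:best_approx} to $Jv_\nc\in V$ to obtain $a_\pw(Jv_\nc-I_\nc Jv_\nc,w_m)=a_\pw(Jv_\nc-v_\nc,w_m)=0$ for all $w_m\in P_m(\T)$; choosing $w_m$ so that $D^m w_m$ ranges over $P_0(\T;\mathbb{R}^{2^m})$ (resp.\ symmetric matrices for $m=2$, which suffices since $D^m_\pw$ is symmetric there) gives $\int_\Omega \Pi_0G:D^m_\pw(v_\nc-Jv_\nc)\dx=0$, whence a Cauchy--Schwarz inequality and the definition \eqref{eq:lambda0} of $\Lambda_0$ yield
\begin{align*}
\int_\Omega G:D^m_\pw(v_\nc-Jv_\nc)\dx=\int_\Omega (G-\Pi_0G):D^m_\pw(v_\nc-Jv_\nc)\dx\le \|G-\Pi_0G\|\,\Lambda_0\trinl v_\nc\trinr_\pw.
\end{align*}

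For the $g$-term, the $L^2$ orthogonality \eqref{eq:companion_orthogonality} shows $\int_\Omega\Pi_m g\,(v_\nc-Jv_\nc)\dx=0$, so a weighted Cauchy--Schwarz and then \eqref{eq:interpolation_estimate} applied to $Jv_\nc\in V$ (using $v_\nc-Jv_\nc=I_\nc(Jv_\nc)-Jv_\nc$) combined with \eqref{eq:lambda0} deliver
\begin{align*}
\int_\Omega g(v_\nc-Jv_\nc)\dx\le \|h_\T^m(g-\Pi_m g)\|\,\|h_\T^{-m}(v_\nc-Jv_\nc)\|\le \kappa_m\,\osc_m(g,\T)\,\Lambda_0\trinl v_\nc\trinr_\pw.
\end{align*}
Substituting $v_\nc=e_\nc$ and dividing by $\trinl e_\nc\trinr_\pw$ (if positive; otherwise the claim is trivial) concludes the proof. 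The only nontrivial point is the cancellation of the point-force contribution, which hinges on $I_\nc J=\mathrm{id}$ together with the vertex-interpolation character of $I_\M$; the remaining two estimates are routine applications of \eqref{eq:best_approx}, \eqref{eq:companion_orthogonality}, and \eqref{eq:interpolation_estimate}.
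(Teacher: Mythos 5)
Your proof is correct and takes essentially the same route as the paper: the paper writes $\trinl e_\nc\trinr_\pw^2 = \widehat F((1-J)e_\nc)$ and bounds $\|\widehat F\circ(1-J)\|_{V_\nc^*}$ by $\Lambda_0(\|G-\Pi_0 G\|+\kappa_m\osc_m(g,\T))$ via exactly the same cancellations and Cauchy estimates, so substituting $v_\nc=e_\nc$ in your identity reproduces the paper's argument verbatim. Two small remarks: (i) the paper silently omits the point-force term from the expansion of $\widehat F\circ(1-J)(v_\nc)$, so your explicit observation that $I_\nc J=\mathrm{id}$ together with the vertex-evaluation character of $I_\M$ forces $(Jv_\nc)(a)=v_\nc(a)$ supplies a justification the paper leaves implicit; (ii) the paper invokes the $L^2$ orthogonality \eqref{eq:L2ortho} directly to drop $\Pi_0 G$, whereas you derive the same orthogonality from \eqref{eq:best_approx} and the surjectivity of $D^m_\pw:P_m(\T)\to P_0(\T;\mathbb S)$ (resp.\ $P_0(\T;\mathbb R^2)$) — mathematically equivalent, just a different emphasis on the hypotheses.
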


\begin{proof} Since 
$\displaystyle  \trinl u_\nc^{\rm org} - u_\nc^{\rm mod}\trinr^2_\pw=\widehat{F} ((1-J) (u_\nc^{\rm org} - u_\nc^{\rm mod})) \le \| \widehat{F} \circ (1-J) \|_{V^*_\nc} \: \trinl u_\nc^{\rm org} - u_\nc^{\rm mod}\trinr_\pw$,
it remains to analyse $\| \widehat{F} \circ (1-J) \|_{V^*_\nc}$. For any $v_\nc \in V_\nc$ with $\trinl v_\nc \trinr_\pw=1$, the definition \eqref{Fhat:format} shows
\begin{align*}
\widehat{F} \circ (1-J) (v_\nc) & = \int_\Omega G : D^2_\pw(v_\nc - J v_\nc) \dx +\int_{\Omega} g (v_\nc - J v_\nc ) \dx \\
& = \int_{\Omega} (G - \Pi_0  G): D^2_\pw(v_\nc - J v_\nc) \dx +\int_{\Omega} (g-\Pi_m g)(v_\nc - J v_\nc ) \dx
\end{align*}
with the assumptions that  $\Pi_0 D^2  J v_\nc = D^2_\pw v_\nc$ and $\Pi_m J v_\nc = v_\nc$ for $m=1$ and $m=2$ in the last step that holds for CR and Morley FEM (see Sections \ref{sec:cr} and \ref{sec:Morley}). This and $\|h_\T^{-m} (v_\nc - J v_\nc) \| \le \kappa_m \trinl v_\nc - J v_\nc \trinr_\pw$ from \eqref{eq:interpolation_estimate} and \eqref{eq:right_inverse}  prove
$$  \widehat{F} \circ (1-J) (v_\nc) \le (\| G - \Pi_0  G \| + \kappa_m \: \osc_m(g, \T))\trinl v_\nc - J v_\nc \trinr_\pw. $$
The definition of $\Lambda_0$ in \eqref{eq:lambda0} shows $ \trinl v_\nc - J v_\nc \trinr_\pw \le \Lambda_0 
\trinl v_\nc \trinr_\pw = \Lambda_0.$ The combination of the preceding estimates proves 
$\widehat{F} \circ (1-J) (v_\nc)  \le \Lambda_0  (\| G - \Pi_0  G \| + \kappa_m \: \osc_m(g, \T)).$ This concludes the proof. 
\end{proof}

\noindent The data-oscillation term $\osc_m(g,\T)=o(h_{\rm max})$ can be of higher-order if $g \in H^1(\T)$ is piecewise smooth for $m \ge 2$, 
while  this is less clear for the data-oscillation  $\|G- \Pi_0 G\|$. 

\medskip \noindent The subsequent example shows that $(a)$ the original scheme can be optimal while the modified one is not,  $(b)$  the bound in the Proposition~\ref{prop} 
is sharp in general (at least if $g\equiv 0$). The methodology is similar to the analysis in Theorem~\ref{thm:bestapxconstantsareallequal}; in fact $(c)$ is another 
proof of the optimality of $C_\qo=1+\Lambda_0^2$ (recall  $\Lambda_0>0$ throughout the paper).

\begin{example} 
There exist $G:= D^m J z_\nc \in D^m V_{\rm c}$ and $g:=0$ in \eqref{F:format} for some $z_\nc \in V_\nc$ such that the exact solution $u \in V$ to \eqref{eq:weakabstract}, the discrete solution $u_\nc^{\rm org}$ to the original scheme \eqref{eq:discrete} and the discrete solution
 $u_\nc^{\rm mod} \in V_\nc$ to \eqref{eq:modified_discrete}  with $F$ and $F_h= \widehat{F}|_{V_h}$ from \eqref{F:format}-\eqref{Fhat:format}, satisfy 
  $(a) \; u_\nc^{\rm org} = I_\nc u, $ 
  $(b) \; \trinl u_\nc^{\rm org} - u_\nc^{\rm mod} \trinr_\pw = \Lambda_0^2,$ and 
   $(c) \; \trinl u - u_\nc^{\rm mod} \trinr^2_\pw = \trinl u- u_\nc^{\rm org} \trinr^2_\pw  + \Lambda_0^4 = {  \Lambda_0^2(1+\Lambda_0^2). }$
\end{example}

\medskip \noindent{\it Proof of $(a)$.} Given $V_\nc$ and $J$ from Subsections~\ref{subsection:ncfem} and \ref{sec:companion}, there exists  $z_\nc \in V_\nc$ with $\trinl z_\nc \trinr_\pw=1$ and $\trinl z_\nc - J z_\nc \trinr_\pw = \Lambda_0 \trinl z_\nc \trinr_\pw= \Lambda_0$.
Let $G:= D^m J z_\nc$ and recall $\Pi_0 G= D^m_\pw I_\nc J z_\nc  = D^m_\pw z_\nc$ with $\|\Pi_0 G \|= \trinl z_\nc \trinr_\pw=1$ and $\|G-\Pi_0 G\| = \trinl z_\nc - J z_\nc \trinr_\pw=\Lambda_0$. For all $v_\nc \in V_\nc$, \eqref{eq:best_approx} shows
\[a_\pw(z_\nc, v_\nc) = a_\pw(J z_\nc, v_\nc) = \int_{\Omega}G : D^m_\pw v_\nc  \dx \]
with $G:= D^m J z_\nc$ in the last step.  Hence $u_\nc^{\rm org}=z_\nc$. Since $g=0$, for $u= J z_\nc$, \eqref{F:format} shows
\[ a(u,v)= a(Jz_\nc, v) =\int_\Omega G: D^m v \dx  \text{ for all } v \in V. \]
This and $I_\nc u = I_\nc J z_\nc = z_\nc = u_\nc^{\rm org}$ proves the equality in $(a)$.

\medskip \noindent{\it Proof of $(b)$.}  Recall 
\begin{align*}
\Lambda_0^2&= \trinl z_\nc - J z_\nc \trinr_\pw^2= a_\pw(J z_\nc - z_\nc, J z_\nc) \\
& = a(u,u) - a_\pw(z_\nc,z_\nc) = \int_{\Omega} G : D^m J z_\nc \dx  - \int_{\Omega} G : D^m_\pw z_\nc \dx  \\& = a_\pw( u_\nc^{\rm mod}-u_\nc^{\rm org},z_\nc).
\end{align*}
A Cauchy inequality in the semi-scalar inner product $a_\pw$ with $\trinl z_\nc \trinr_\pw=1$ leads to 
\[ \Lambda_0^2= a_\pw(u_\nc^{\rm mod}-u_\nc^{\rm org},z_\nc) \le 
\trinl u_\nc^{\rm mod}-u_\nc^{\rm org} \trinr_\pw.\] 
For $g=0$ and $\|G-\Pi_0 G\|= \Lambda_0$ from above, the Proposition~\ref{prop} shows
$\trinl u_\nc^{\rm mod}-u_\nc^{\rm org} \trinr_\pw \le \Lambda_0^2$.
This concludes the proof of $(b)$.
\qed

\medskip \noindent{\it Proof of $(c)$.}  
The assertion $(c)$ follows from the above and the Pythagoras theorem 
\[  \trinl u- u_\nc^{\rm mod}\trinr_\pw^2 = \trinl J z_\nc - z_\nc \trinr_\pw^2 + \trinl  u_\nc^{\rm mod}-  u_\nc^{\rm org} \trinr_\pw^2. \qed \]

\subsection{Counterexamples for best-approximation}
The best-approximation property for the original nonconforming FEM with the natural right-hand side fails in general: The data oscillation terms cannot be removed.
Suppose  $\Omega$ a  simply-connected domain throughout this subsection.

\subsubsection{Counterexample to best-approximation of CRFEM for $m=1$}
Suppose $|\T| \ge 2 $ so that there exists some $\beta_\CR \in \CR^1(\T) \setminus S^1(\T)$. Since the quotient space 
$S^1(\T)/ {\mathbb R}$ is a Hilbert space with scalar product $a_\pw(\bullet,\bullet)$, 
the linear functional $v_{\rm c} \mapsto a_\pw(\beta_\CR,v_{\rm c}) \in (S^1(\T) \setminus {\mathbb R})^*$ 
has a Riesz representation $\beta_{\rm c}$ so that $b_\CR:=\beta_\CR - \beta_{\rm c}\in \CR^1(\T) \setminus S^1(\T)$ 
satisfies $a_\pw(b_\CR, v_{\rm c})=0$ for all $v_{\rm c} \in S^1(\T)$. 
Without loss of generality, suppose $\trinl b_\CR \trinr_\pw=1$. 
Since  ${\rm Curl }_\pw b_\CR$ is a rotated gradient $\nabla_\pw b_\CR$ etc. in $2$D,   
the $L^2$ orthogonality ${\rm Curl }_\pw b_\CR \perp {\rm Curl} \: S^1(\T)$ in $L^2(\Omega; {\mathbb R}^2)$ follows.
The companion operators in this paper obey homogeneous boundary conditions but are naturally extended to functions without any restriction. 
This leads to some  $b:= J b_\CR\in H^1(\Omega)$ with $b_\CR:= I_\CR b$. Given those functions, define the data 
$G:={\rm Curl } \: b \perp \nabla H^1_0(\Omega)$ and $g \equiv 0$ in \eqref{F:format}-\eqref{Fhat:format} for   $F$ and ${\widehat F}$. 
Then the exact solution   $u \equiv 0$ and hence the best approximation provides $u_\CR^{\rm mod}\equiv 0$ as well.
The investigation of the  original CRFEM solution  $u_\nc^{\rm org} \in  \CR_0^1(\T)$ to \eqref{eq:discrete} recalls the  discrete Helmholtz decomposition from Subsection~\ref{sec:cr}:
 $P_0(\T; {\mathbb R}^2)= \nabla_\pw \CR^1_0(\T)  \operp {\rm Curl}\: (S^1(\T) \setminus {\mathbb R})$. The $L^2$ orthogonality of
$\Pi_0 G= \Pi_0 \:  {\rm Curl } \: b = {\rm Curl }_\pw \: I_\CR b = {\rm Curl }_\pw $ $b_\CR \perp {\rm Curl } \:  S^1(\T)  $ 
  in $L^2(\Omega; {\mathbb R}^2)$ therefore proves $\Pi_0 G =\nabla_\pw u_\CR$
for some $u_\CR \in \CR^1_0(\T)$. In fact, $u_\CR$ solves the original CRFEM because of 
\[
a_\pw(u_\CR, v_\CR) = \int_\Omega \Pi_0 G \cdot \nabla_\pw v_\CR \dx = \int_\Omega G\cdot \nabla_\pw v_\CR \dx 
\] 
for all $v_\CR \in \CR^1_0(\T)$. That is, $\trinl u_\CR^{\rm org}\trinr_\pw = \trinl u_\CR \trinr_\pw = \|\Pi_0 G\|= \trinl I_\CR b \trinr_\pw = \trinl b_\CR \trinr_\pw=1$. 
This contradicts the best-approximation property.

\subsubsection{Counterexample to best-approximation of Morley FEM for $m=2$}
Suppose $|\T| \ge 2 $ and recall 
$S^1(\T) \subsetneq \CR^1_0(\T)$ for the existence of $\beta_\CR \in \CR^1(\T; {\mathbb R}^2) \setminus S^1(\T; {\mathbb R}^2)$. The linear Green strain $\varepsilon:= {\rm sym} \: D$ from linear elasticity allows for a Korn inequality with the rigid body motions 
$${\rm RH}(\Omega):=\{ \gamma \in P_1(\Omega; {\mathbb R}^2): D \gamma + D \gamma^T=0\} $$
(a linear subspace of dimension $3$ of the form $\gamma(x)- (a,b) + c(x_2,-x_1)$ for all $x \in \Omega$). The semi-scalar product $(\varepsilon(\bullet), \varepsilon(\bullet) )_{L^2(\Omega)}$ in the vector space 
$ S^1(\T; {\mathbb R}^2) / {\rm RH}(\Omega)$ is a scalar product (from the Korn inequality). 
The Riesz representation $\beta_{\rm c} \in  S^1(\T; {\mathbb R}^2) / {\rm RH}(\Omega)$ of the functional $v_{\rm c} \mapsto 
(\varepsilon_\pw (\beta_\CR), \varepsilon(v_{\rm c}))$ in $(S^1(\T; {\mathbb R}^2) / {\rm RH}(\Omega))^*$ provides $b_\CR:= \beta_\CR-\beta_{\rm c} \in 
\CR^1(\T; {\mathbb R}^2) \setminus S^1(\T; {\mathbb R}^2) $  with 
$\varepsilon_\pw(b_\CR) \perp \varepsilon (S^1(\T; {\mathbb R}^2)) $ in $L^2(\Omega; {\mathbb S})$. Without loss of generality suppose $\|\varepsilon (b_\CR)\|=1$. Recall the companion operator $J: \CR^1(\T) \rightarrow H^1(\Omega)$ from the previous example and apply 
$J b_\CR = (J b_\CR(1),  J b_\CR (2) ) \in H^1(\Omega; {\mathbb R}^2)$ componentwise to  $b_\CR \in \CR^1(\T; {\mathbb R}^2)$. 
Then let  $G:= {\rm sym} \: {\rm Curl} \: (J b_\CR(2), -J  b_\CR(1)) $ and $g \equiv 0$.  
Since $G \perp D^2 H^2_0(\Omega)$, the solutions  $u=0=u_\M^{\rm mod}$ coincide. 
On the other hand,  for all $\phi, \psi \in H^1(T; {\mathbb R}^2)$, 
\[ ({\rm sym} \: {\rm Curl} \:(\phi_2,-\phi_1))  :({\rm sym} \: {\rm Curl} \:(\psi_2,-\psi_1)) =
\varepsilon(\phi): \varepsilon(\psi) \text{ a.e. in } T. \]  
This relation shows 
\begin{align*}
\Pi_0 G & = {\rm sym} \: {\rm Curl}_\pw \:  I_\CR(J b_\CR(2),  -J b_\CR (1) )
= {\rm sym} \: {\rm Curl}_\pw \:  ( b_\CR(2),  - b_\CR (1) ) \\
 & \perp  {\rm sym} \: {\rm Curl}\;  S^1(\T; {\mathbb R}^2) \text{ in } L^2(\Omega;{\mathbb S}).  
 \end{align*}
The discrete Helmholtz decomposition
\(P_0(\T; {\mathbb S}) = D^2_\pw \M(\T) \operp  {\rm sym} \: {\rm Curl}\;  S^1(\T; {\mathbb R}^2/{\mathbb R}^2) \)
  from Subsection~\ref{sec:Morley} 
implies $\Pi_0 G =D^2_\pw u_\M$ for some $u_\M \in \M(\T)$. Then $u_\M=u_\M^{\rm org}$ and $\trinl u_\M^{\rm org} \trinl_\pw = \| \Pi_0 G\|= \|b_\CR\|_{L^2(T)}=1$. This contradicts the best-approximation property.

\section{A~posteriori analysis} \label{sec:aposteriori}
Unlike conforming schemes where the guaranteed error bound involves constants from approximation in the  error estimates of a quasi-interpolation operator, the constants in the nonconforming schemes appear small and are related to the simplicial element domain (and not  the shape of nodal patches).  Hence the resulting a posteriori error estimators can indeed be employed directly in the a~posteriori error control.

\subsection{Reliable error estimates}
\noindent The {\it natural right-hand side} in the discrete problem is $F_h:=\widehat{F}|_{V_{\nc}} \in V_\nc^*$  with   $\widehat{F}$ from \eqref{Fhat:format}.

\begin{thm}[a~posteriori estimate for $F_h=\widehat{F}|_{V_{\nc}}$] \label{thm_a:u-Junc} Suppose the $L^2$ orthogonality in  \eqref{eq:L2ortho}.  
Let $u \in V$  solve \eqref{eq:weakabstract} and let $u_\nc \in V_\nc$  solve \eqref{eq:discrete} with right-hand side $F_h:=\widehat{F}|_{V_{\nc}} $ from \eqref{Fhat:format}. 
Then 
\begin{align*}
(a) \quad 
\trinl  u-J u_\nc  \trinr^2 & + \trinl I_\nc u  - u_\nc  \trinr_\pw^2  \le  (\|G- \Pi_0 G \|+ 
\kappa_m  \| h_\T^m g\| +\trinl u_\nc- J u_\nc \trinr_\pw)^2, \\
(b)  \quad  \trinl u-u_\nc \trinr^2_\pw  & +\trinl I_\nc u-u_\nc \trinr^2_\pw  \le  (   \|G  - \Pi_0 G \| + \kappa_m \|h_\T^m g\| + \trinl u_\nc- J u_\nc \trinr_\pw)^2\\
& \qquad \qquad \qquad \qquad + 2 \widehat{F}(u_\nc-Ju_\nc).
\end{align*}
\end{thm}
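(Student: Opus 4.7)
Put $w := u - Ju_\nc \in V$, $\eta := \trinl w\trinr$, $\rho := \trinl I_\nc u - u_\nc\trinr_\pw$, $\gamma := \trinl (1-I_\nc) w\trinr_\pw$, $C := \trinl u_\nc - Ju_\nc\trinr_\pw$, and let $K := \|G-\Pi_0 G\| + \kappa_m \|h_\T^m g\| + C$ be the right-hand side (without the square) in (a). Observe $I_\nc w = I_\nc u - u_\nc$ since $I_\nc Ju_\nc = u_\nc$, whence the Pythagoras theorem in $V + V_\nc$ gives $\eta^2 = \rho^2 + \gamma^2$.

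The plan is to derive both (a) and (b) from the single key identity
\[
\eta^2 \;=\; \widehat{F}\bigl((1-I_\nc)w\bigr)\;-\;a_\pw\bigl(Ju_\nc - u_\nc,\,(1-I_\nc)w\bigr).
\]
To establish it, I would start from $\eta^2 = a(u,w) - a_\pw(Ju_\nc, w)$, substitute $a(u,w) = \widehat{F}(w)$ from \eqref{eq:weakabstract} (valid since $w \in V$), decompose $a_\pw(Ju_\nc,w) = a_\pw(u_\nc,w) + a_\pw(Ju_\nc - u_\nc, w)$, invoke the $a_\pw$-orthogonality $V_\nc\perp(1-I_\nc)w$ to rewrite $a_\pw(u_\nc,w) = a_\pw(u_\nc, I_\nc w) = \widehat{F}(I_\nc w)$ (via the discrete equation), and finally use $Ju_\nc - u_\nc \perp V_\nc$ in $a_\pw$ (from $I_\nc(Ju_\nc - u_\nc) = 0$) to replace the second $w$ by $(1-I_\nc)w$.

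Next I would bound the two terms in the key identity. By \eqref{Fhat:format}, $\widehat F((1-I_\nc)w)$ splits into three summands: the $G$-integral becomes $\int (G-\Pi_0 G):D_\pw^m(w-I_\nc w)\dx$ by \eqref{eq:L2ortho} for $w \in V$ and is bounded by $\|G-\Pi_0 G\|\,\gamma$; the $g$-integral is bounded by $\kappa_m\|h_\T^m g\|\,\gamma$ via \eqref{eq:interpolation_estimate}; the point-force contributions vanish because the Morley interpolation matches vertex values, so $(1-I_\nc)w$ vanishes at every $a \in A \subset \V(\Omega)$ (and no point forces occur when $m=1$). A Cauchy inequality then gives $|a_\pw(Ju_\nc - u_\nc,(1-I_\nc)w)| \le C\gamma$, hence $\eta^2 \le K\gamma$. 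The completion of squares $2K\gamma - \gamma^2 = K^2 - (K-\gamma)^2 \le K^2$, applied to $\eta^2+\rho^2 = 2\eta^2-\gamma^2$, finishes (a).

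For (b), the algebraic expansion $\trinl u - u_\nc\trinr_\pw^2 = \eta^2 + C^2 - 2 a_\pw(w, u_\nc - Ju_\nc)$ together with the rearrangement $a_\pw(Ju_\nc - u_\nc, w) = \widehat{F}((1-I_\nc)w) - \eta^2$ of the key identity (where $Ju_\nc - u_\nc \perp V_\nc$ in $a_\pw$ lets me revert $(1-I_\nc)w$ back to $w$) yields
\[
\trinl u - u_\nc\trinr_\pw^2 + \rho^2 \;=\; -\gamma^2 + C^2 + 2\,\widehat{F}\bigl((1-I_\nc)w\bigr).
\]
I would split $\widehat{F}((1-I_\nc)w) = \widehat{F}(u - I_\nc u) + \widehat{F}(u_\nc - Ju_\nc)$, keeping the second term as required in the target estimate. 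The first term is bounded by $(\|G-\Pi_0 G\| + \kappa_m\|h_\T^m g\|)\trinl u - I_\nc u\trinr_\pw$ by the same orthogonality/interpolation arguments (this time for $u\in V$), and the triangle inequality on $u - I_\nc u = (1-I_\nc)w + (Ju_\nc - u_\nc)$ gives $\trinl u - I_\nc u\trinr_\pw \le \gamma + C$. A final completion of squares turns the resulting $-\gamma^2 + C^2 + 2(K - C)(\gamma + C)$ into $-(\gamma - (K-C))^2 + K^2 \le K^2$, delivering (b).

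The main obstacle is the careful bookkeeping of three overlapping orthogonality principles (the $a_\pw$-projection $I_\nc$, the $L^2$-type orthogonality \eqref{eq:L2ortho}, and the vertex matching that annihilates point forces): each residual of $\widehat{F}$ has to be tested against a function for which one of these applies, so that the data bound stays proportional to $\gamma$ (or $\gamma + C$) and not to the strictly larger quantity $\trinl u - u_\nc\trinr_\pw$.
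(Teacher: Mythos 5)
Your proposal is correct and follows essentially the same route as the paper: the key identity $\trinl u-Ju_\nc\trinr^2 = \widehat{F}((1-I_\nc)w)+a_\pw(u_\nc-Ju_\nc,(1-I_\nc)w)$, the $L^2$ orthogonality \eqref{eq:L2ortho} and interpolation estimate \eqref{eq:interpolation_estimate} to bound the data terms, the vertex matching to annihilate the point forces, the Pythagoras theorem \eqref{eq:Pythogoras}, and a completion of squares. Your part (b) is organized slightly more economically by reusing the identity from (a) (via $u-u_\nc = w-(u_\nc-Ju_\nc)$) rather than rederiving a fresh algebraic expansion of $\trinl u-u_\nc\trinr_\pw^2$ as the paper does, but the underlying orthogonalities and the final estimate are identical.
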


The former analysis in  \cite{HuShi09, CCDGHU13, VeigaNiiranenStenberg07} is based on Helmholtz decompositions and worked out for simply-connected domains.
This is problematic and the three-dimensional application in  \cite{HuShi09} requires some 3D Helmholtz decompositions for a domain that is simply-connected and has a boundary that is connected and a discrete counterpart is unknown. The present analysis is rather simple and can immediately 
be extended to any bounded polyhedral Lipschitz domain with the help of the companion operator $J$ in 3D from \cite{CCP_new}. 

\medskip

\noindent{\it Proof of (a).}  For $e:= u-J u_\nc$,  elementary algebra and \eqref{eq:weakabstract}-\eqref{eq:best_approx}  lead to
\begin{align} \label{eq:rhs_a}
\trinl  e\trinr^2 &  = a(u,e)+ a_\pw(u_\nc- J u_\nc,e) -a_\pw(u_\nc,I_\nc e)  \notag\\
&= \widehat{F}(e- I_\nc e)+ a_\pw(u_\nc- J u_\nc,e-I_\nc e).
\end{align}
This, the definition of $\widehat{F}$ in \eqref{Fhat:format}, \eqref{eq:L2ortho}, and a Cauchy  inequality show 
\begin{align*}
\trinl  e\trinr^2     & \le  \int_{\Omega} (G-\Pi_0 G): D^m(e-  I_\nc e) \dx + \int_\Omega g(e-I_\nc e) \dx  \notag \\
& \quad +  \trinl u_\nc- J u_\nc  \trinr_\pw  \trinl e-  I_\nc e \trinr_\pw. 
\end{align*}
This, (weighted) Cauchy inequalities and
$
\| h_\T^{-m} (e-I_\nc e)\| \le \kappa_m \trinl e - I_\nc e\trinr_\pw
$ from \eqref{eq:interpolation_estimate} imply
\begin{align*}
\trinl  e\trinr^2     & \le  ( \|G-\Pi_0 G \| + \kappa_m \|h_\T^m g\| +  \trinl u_\nc- J u_\nc  \trinr_\pw)
\trinl e- I_\nc e \trinr_\pw.
\end{align*}
The Pythogoras theorem  \eqref{eq:Pythogoras} shows $\trinl  e\trinr^2 = \trinl  e- I_\nc e \trinr_\pw^2 + \trinl  I_\nc e\trinr_\pw^2$. This and Young's inequality conclude the proof. \hfill \qed

\medskip \noindent
{\it Proof of $(b)$.}  Elementary algebra and $a_\pw(u_\nc, u-I_\nc u)=0=a_\pw(I_\nc u, J u_\nc-u_\nc )$  from \eqref{eq:best_approx} result in 
\begin{equation*}
\trinl u-u_\nc \trinr^2_\pw= a_\pw(u -I_\nc u ,  Ju_\nc -u_
\nc) + a_\pw(u, u-Ju_\nc) - a_\pw(u_\nc, I_\nc u-u_\nc).
\end{equation*}
This, the Pythagoras theorem \eqref{eq:Pythogoras}, a Cauchy inequality,  the continuous problem \eqref{eq:weakabstract}  with the test function $u- J u_\nc$, and \eqref{eq:discrete}  with the test function $I_\nc u-  u_\nc$ lead to 
\begin{align} \label{eq:pyt_cauchya}
\trinl u-I_\nc u \trinr^2_\pw+\trinl I_\nc u-u_\nc \trinr^2_\pw & \le \trinl u-I_\nc u \trinr_\pw
\trinl u_\nc-J  u_\nc \trinr_\pw  \notag \\
& \quad + \widehat{F}(u- I _\nc u) +\widehat{F}(u_\nc-Ju_\nc).
\end{align}
 The definition of $\widehat{F}$ in \eqref{Fhat:format}, \eqref{eq:L2ortho}, (weighted) Cauchy inequalities, and \eqref{eq:interpolation_estimate} show
\begin{align*}
\widehat{F}(u- I _\nc u) & = \int_{\Omega} (G-\Pi_0 G): D^m(u-  I_\nc u) \dx + \int_\Omega g (u-I_\nc u) \dx  \\
& \le (\|G  - \Pi_0 G \| + \kappa_m \|h_\T^m g\|) \trinl u- I_\nc u  \trinr_\pw.  
\end{align*} 
The last two displayed  estimates  and Young's inequality conclude the proof. \qed

\medskip
\begin{rem} The   term $ \widehat{F}(u_\nc-Ju_\nc)$ can be computed and so $(b)$ is an a~posteriori error estimate. Moreover, \eqref{eq:companion_orthogonality} and (weighted) Cauchy inequalities show
$$\widehat{F}(u_\nc-Ju_\nc) \le (\|G  - \Pi_0 G \| + \kappa_m \osc_m(g,\T)) \trinl u_\nc- J u_\nc  \trinr_\pw. $$
The combination with \eqref{eq:pyt_cauchya} leads to (a) up to the extra factor 2. 
\end{rem}

\medskip 
\noindent The {\it modified choice of the right-hand side} reads $F_h=F \circ J$ for the right-inverse $J$ of $I_\nc$ from Section \ref{sec:companion}. 
The discrete problem for this choice of $F_h$ from \eqref{eq:modified_discrete} 
leads to a (new) a~posteriori estimate in Theorem \ref{thm:u-Junc}. 

\medskip

\begin{thm}[a posteriori estimate for  $F_h=F \circ J$] \label{thm:u-Junc} 
Suppose \eqref{F:format} and the $L^2$ orthogonality in \eqref{eq:L2ortho}. 
Let $u \in V$  solve \eqref{eq:weakabstract} and let $u_\nc \in V_\nc$  solve \eqref{eq:modified_discrete} with right-hand side $F_h:=F \circ J$. 
Then
\begin{align*}
(a) \; \trinl u-J u_\nc \trinr  & \le  \sqrt{1+ \Lambda_0^2} \|G- \Pi_0 G \|+ 
\sqrt{(\kappa_m  \| h_\T^m g\| +\trinl u_\nc- J u_\nc \trinr_\pw )^2 + 
\kappa_m^2 \Lambda_0^2 \osc^2_m(g,\T) }, \\
(b) \; \trinl u-u_\nc \trinr_\pw  & \le 
\sqrt{2(\trinl u_\nc-Ju_\nc \trinr^2 + {\rm apx}(F)^2)}
\end{align*}
with ${\rm apx}(F):= (1+ \Lambda_{\rm J} ) \|G - \Pi_0 G \| + \kappa_m \|h_\T^m g\|+\kappa_m \Lambda_{\rm J} \osc_m(g,\T)$ and $\Lambda_0$ (resp. $\Lambda_{\rm J}$) from \eqref{eq:lambda0} (resp.  \eqref{eq:companion_estimate}).
\end{thm}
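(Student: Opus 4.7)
For part (a), set $e := u - J u_\nc \in V$ and introduce the auxiliary conforming function $\eta := u - J I_\nc u = e - J I_\nc e \in V$, which satisfies $I_\nc \eta = 0$, so $\Pi_0 D^m \eta = 0$ by \eqref{eq:best_approx}. The starting identity is
\[
\trinl e \trinr^2 = F(\eta) + a_\pw(u_\nc - Ju_\nc,\, e),
\]
obtained by expanding $\trinl e \trinr^2 = a(u,e) - a(Ju_\nc, e)$, applying \eqref{eq:weakabstract} to rewrite $a(u,e) = F(e)$, using \eqref{eq:best_approx} to obtain $a_\pw(u_\nc, e) = a_\pw(u_\nc, I_\nc e)$, and invoking the modified discrete problem \eqref{eq:modified_discrete} to get $a_\pw(u_\nc, I_\nc e) = F(J I_\nc e)$. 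The same $a$-identities yield the orthogonality $a_\pw(u_\nc - Ju_\nc, I_\nc e) = 0$, so that $a_\pw(u_\nc - Ju_\nc, e) = a_\pw(u_\nc - Ju_\nc, e - I_\nc e)$ with $e - I_\nc e = (u - I_\nc u) + (u_\nc - Ju_\nc)$ ready for a two-dimensional Cauchy--Schwarz argument.

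Bounding $F(\eta)$ term-by-term in \eqref{F:format}: the point-force contribution $\sum_{a \in A} \beta(a)\eta(a)$ vanishes because for $a \in A \subset \V(\Omega)$ both $u(a)$ and $J I_\nc u(a)$ equal $I_\nc u(a)=u(a)$, so $\eta(a)=0$; the $G$ contribution uses $\Pi_0 D^m \eta = 0$ to give $|\int_\Omega G : D^m \eta| \le \|G - \Pi_0 G\|\, \trinl \eta \trinr$; and the $g$ contribution splits as $\int_\Omega g\eta = \int_\Omega g (u - I_\nc u) + \int_\Omega g (1-J)I_\nc u$. The first piece uses a weighted Cauchy inequality and \eqref{eq:interpolation_estimate} to give $\kappa_m \|h_\T^m g\|\, \trinl u - I_\nc u\trinr_\pw$; the second exploits the $L^2$ orthogonality \eqref{eq:companion_orthogonality} to insert $g - \Pi_m g$, applies \eqref{eq:interpolation_estimate} to the conforming $J I_\nc u \in V$, and estimates $\trinl (1-J) I_\nc u\trinr_\pw \le \Lambda_0 \trinl I_\nc e\trinr_\pw + \trinl u_\nc - Ju_\nc \trinr_\pw$ via the decomposition $(1-J) I_\nc u = (1-J) I_\nc e + (1-J) u_\nc$ and \eqref{eq:lambda0}, producing the $\osc_m(g, \T)$ contributions. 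The analogous decomposition controls $\trinl \eta\trinr$: together with the two-dimensional Cauchy inequality $\trinl u - I_\nc u\trinr_\pw + \Lambda_0 \trinl I_\nc e\trinr_\pw \le \sqrt{1+\Lambda_0^2}\, (\trinl u - I_\nc u\trinr_\pw^2 + \trinl I_\nc e\trinr_\pw^2)^{1/2} = \sqrt{1+\Lambda_0^2}\, \trinl u - u_\nc\trinr_\pw$ (by the Pythagoras \eqref{eq:Pythogoras}), this produces the sharp factor $\sqrt{1+\Lambda_0^2}$ in front of $\|G - \Pi_0 G\|$.

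Collecting the estimates, $\trinl e \trinr^2$ is dominated by $\sqrt{1+\Lambda_0^2}\|G - \Pi_0 G\|\, \trinl e \trinr$ plus a combination $BX + CY$ with $X := \trinl u - I_\nc u\trinr_\pw$, $Y := \trinl I_\nc e\trinr_\pw$, $B := \kappa_m \|h_\T^m g\| + \trinl u_\nc - Ju_\nc \trinr_\pw$, and $C := \kappa_m \Lambda_0\, \osc_m(g, \T)$; the two-dimensional Cauchy--Schwarz $BX + CY \le \sqrt{B^2 + C^2}\sqrt{X^2+Y^2}$ paired with the Pythagoras $X^2 + Y^2 = \trinl u - u_\nc\trinr_\pw^2$ and a triangle-inequality rearrangement delivers part (a) after dividing by $\trinl e\trinr$. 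For part (b), the triangle inequality $\trinl u - u_\nc\trinr_\pw \le \trinl u - Ju_\nc\trinr + \trinl u_\nc - Ju_\nc\trinr_\pw$ reduces the claim to $\trinl u - Ju_\nc\trinr \le \mathrm{apx}(F)$, which follows from the same machinery but using \eqref{eq:companion_estimate} in the form $\trinl (1-J)I_\nc u\trinr_\pw \le \Lambda_\jc \trinl u - I_\nc u\trinr_\pw$ in place of \eqref{eq:lambda0}; this removes the $\trinl u_\nc - Ju_\nc\trinr_\pw$ dependence from the bound for $\trinl u - Ju_\nc\trinr$ and yields the $(1+\Lambda_\jc)$ and $\Lambda_\jc$ constants in $\mathrm{apx}(F)$, while the elementary inequality $(a+b)^2 \le 2(a^2+b^2)$ produces the factor $2$ in the final statement. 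The principal technical obstacle is the careful bookkeeping in part (a): one must pair each oscillation coefficient with the correct Pythagorean partner ($\kappa_m\|h_\T^m g\|$ and $\trinl u_\nc - Ju_\nc\trinr_\pw$ with $\trinl u - I_\nc u\trinr_\pw$; $\kappa_m\Lambda_0\osc_m(g,\T)$ with $\trinl I_\nc e\trinr_\pw$) and re-absorb leftover $\trinl u_\nc - Ju_\nc\trinr_\pw$ contributions via the triangle inequality, so that the two-dimensional Cauchy--Schwarz produces the sharp $\sqrt{B^2+C^2}$ form in place of the crude bound $B+C$.
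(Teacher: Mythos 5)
Your key identity and the vanishing of the point-force terms are correct and match the paper's proof, but your decompositions of $\eta := u - JI_\nc u = e - JI_\nc e$ (with $e := u - Ju_\nc$) take the wrong route and the final cancellation fails. You split $\eta = (u - I_\nc u) + (1-J)I_\nc u$, which leads to pairing the coefficients with $X := \trinl u - I_\nc u\trinr_\pw$ and $Y := \trinl I_\nc e\trinr_\pw$; by \eqref{eq:Pythogoras}, $X^2 + Y^2 = \trinl u - u_\nc\trinr_\pw^2$, \emph{not} $\trinl e\trinr^2$. Since the left-hand side of your starting identity is $\trinl e\trinr^2$, the two-dimensional Cauchy--Schwarz produces a factor $\trinl u - u_\nc\trinr_\pw$ that does not cancel against $\trinl e\trinr$ upon division, and in addition your decomposition of $(1-J)I_\nc u = (1-J)I_\nc e + (u_\nc - Ju_\nc)$ leaks an extra $\trinl u_\nc - Ju_\nc\trinr_\pw$ term into both the $G$-bound and the $\osc_m(g,\T)$-bound that does not appear in the asserted estimate (a). The paper avoids this entirely by using the \emph{$e$-decomposition} $\eta = (e - I_\nc e) + (1-J)I_\nc e$, so that all pieces pair with $\trinl e - I_\nc e\trinr_\pw$ and $\trinl I_\nc e\trinr_\pw$ whose Pythagoras gives exactly $\trinl e\trinr$, and the division goes through cleanly with no leftovers to ``re-absorb''. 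The vague appeal to a ``triangle-inequality rearrangement'' at the end of your sketch does not recover the sharp constant and, as written, does not constitute a proof.

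For part (b), your reduction to the claim $\trinl u - Ju_\nc\trinr \le {\rm apx}(F)$ is not valid: the starting identity $\trinl e\trinr^2 = F(\eta) + a_\pw(u_\nc - Ju_\nc, e - I_\nc e)$ contains $a_\pw(u_\nc - Ju_\nc, e - I_\nc e) = a_\pw(u_\nc - Ju_\nc, u - I_\nc u) + \trinl u_\nc - Ju_\nc\trinr_\pw^2$, and the nonnegative term $\trinl u_\nc - Ju_\nc\trinr_\pw^2$ cannot be eliminated or bounded by data alone, so no estimate for $\trinl u - Ju_\nc\trinr$ free of $\trinl u_\nc - Ju_\nc\trinr_\pw$ follows from this route (switching $\Lambda_0$ to $\Lambda_\jc$ modifies a different piece). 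The paper proves (b) from a \emph{different} starting identity, namely $\trinl u - u_\nc\trinr_\pw^2 = F(u - JI_\nc u) + a_\pw(u - I_\nc u,\, Ju_\nc - u_\nc)$, together with the Pythagoras $\trinl u - u_\nc\trinr_\pw^2 = \trinl u - I_\nc u\trinr_\pw^2 + \trinl I_\nc u - u_\nc\trinr_\pw^2$ and Young's inequality, and never passes through a bound on $\trinl u - Ju_\nc\trinr$. You need this second identity; the triangle inequality you invoke would only deliver (b) if the false intermediate claim held.
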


\noindent{\it Proof of (a).} Elementary algebra with $e:= u-J u_\nc \in V$,  \eqref{eq:weakabstract}, $a_\pw(u_\nc, e-I_\nc e)~=0$ from \eqref{eq:best_approx}, $a_\pw(u_\nc-Ju_\nc, ~I_\nc e)~=~0$ from \eqref{eq:right_inverse}, and \eqref{eq:modified_discrete} lead to 
\begin{align} \label{eq:rhs}
\trinl  e\trinr^2 &  = a(u,e)+ a_\pw(u_\nc- J u_\nc,e) -a_\pw(u_\nc,e)  \notag\\
&= F(e- J I_\nc e)+ a_\pw(u_\nc- J u_\nc,e-I_\nc e). 
\end{align}
For any $v \in V$,   \eqref{eq:companion_orthogonality} and \eqref{eq:L2ortho} imply $\int_{\Omega} \Pi_0 G: D^m(v- J I_\nc v) \dx =0$. This, the definition  \eqref{F:format},  and  \eqref{eq:companion_orthogonality} result in 
\begin{align}\label{eq:F_error}
F(v- J I_\nc v)    & = \int_{\Omega} (G-\Pi_0 G): D^m(v- J I_\nc v) \dx + \int_\Omega g (v-I_\nc v) \dx  \notag \\
& \quad + \int_\Omega (g-\Pi_0 g)((1-J)I_\nc v) \dx \notag 
\\& \le \|G  - \Pi_0 G \|   \trinl v- J I_\nc v \trinr_\pw   + \|h_\T^m g\|
\|h_\T^{-m} (v- I_\nc v)\|  \notag \\
&\quad  + \osc_m(g,\T)
\|h_\T^{-m} (1 - J) I_\nc v \|
\end{align}
with (weighted) Cauchy inequalities in the last step. 
For 
 $\Lambda_0 = \|1-J\|_{L(V_\nc,\widehat{V})}$ from \eqref{eq:lambda0},   a triangle inequality  and $\trinl e\trinr_\pw^2 = \trinl e-I_\nc e \trinr_\pw^2 + \trinl I_\nc e\trinr_\pw^2$ from \eqref{eq:Pythogoras} prove
\[  \trinl e- J I_\nc e \trinr_\pw \le  \trinl e-  I_\nc e \trinr_\pw+ \trinl (1- J) I_\nc e \trinr_\pw \le \sqrt{1+ \Lambda_0^2} \trinl e \trinr_\pw.
\]
Recall from \eqref{eq:interpolation_estimate} and \eqref{eq:Pythogoras} that 
\[ \| h_\T^{-m} (e-I_\nc e)\| \le \kappa_m \trinl e - I_\nc e\trinr_\pw   \text{ and }
\| h_\T^{-m} (1-J)  I_\nc e\|\le \kappa_m \Lambda_0 \trinl  I_\nc e \trinr_\pw.
\]
 Substitute the last three displayed inequalities in \eqref{eq:F_error} with $v:=e$ and then substitute the resulting estimate in \eqref{eq:rhs}. Apply a Cauchy inequality for the second term in the right-hand side of \eqref{eq:rhs}  and  use \eqref{eq:Pythogoras} to conclude the proof of $(a)$. \hfill \qed

\medskip \noindent 
{\it Proof of (b).} Elementary algebra with $a_\pw(u_\nc, u-I_\nc u)=0=a_\pw(I_\nc u, J u_\nc-u_\nc )$ from  \eqref{eq:best_approx} and the continuous problem \eqref{eq:weakabstract} with test function $u- J u_\nc$ show
\[ 
\trinl u-u_\nc \trinr^2_\pw= F(u- Ju_\nc) + a_\pw(u -I_\nc u, Ju_\nc - u_\nc) - a_\pw(u_\nc, I_\nc u-u_\nc).
\]
The Pythagoras theorem \eqref{eq:Pythogoras}, a Cauchy inequality, and \eqref{eq:modified_discrete} reveal in the last identity that 
\begin{equation} \label{eq:pyt_cauchy}
\trinl u-I_\nc u \trinr^2_\pw+\trinl I_\nc u-u_\nc \trinr^2_\pw \le \trinl u-I_\nc u \trinr_\pw
\trinl u_\nc-J  u_\nc \trinr_\pw + F(u- JI _\nc u).
\end{equation}

\noindent The choice of  $v:=u$ in \eqref{eq:F_error}, a triangle inequality, \eqref{eq:interpolation_estimate}, and twice \eqref{eq:companion_estimate}  for $\trinl I_\nc u - J I_\nc u \trinr_\pw \le \Lambda_{\rm J} \trinl u-I_\nc u \trinr_\pw$ lead to 
\begin{align}\label{eq:F}
& F(u- J I_\nc u)  \le \trinl u - I_\nc u\trinr_\pw ( (1+ \Lambda_{\rm J}) \|G- \Pi_0 G \| + \kappa_m  \| h_\T^m g\|  \notag \\
&\qquad  \quad \qquad  \quad + \kappa_m \Lambda_{\rm J} \osc_m(g, \T) ) = {\rm apx} (F) \trinl u - I_\nc u\trinr_\pw. 
\end{align}
A substitution of \eqref{eq:F} in \eqref{eq:pyt_cauchy} and Young's inequality show 
\begin{equation*}
\frac{1}{2} \trinl u - I_\nc u\trinr^2_\pw + \trinl  I_\nc u - u_\nc \trinr^2_\pw \le 
  \trinl u_\nc- J u_\nc \trinr^2_\pw +    {\rm apx}(F)^2.
\end{equation*}
 This concludes the proof.  \qed
 
\begin{rem}[generalization]
 The model scenario of the right-hand side in  \eqref{F:format} solely coincides with the lowest and highest order terms in \eqref{eq:formf} 
 because oscillations occur in those terms because of \eqref{eq:best_approx} and \eqref{eq:companion_orthogonality}. The generalization of Theorems \ref{thm_a:u-Junc}-\ref{thm:u-Junc} 
 to the general situation \eqref{eq:formf}  is straightforward and the extra terms 
read $\displaystyle \sum_{1 \le |\alpha| \le m-1 } \|h_\T^{m-|\alpha|} f_\alpha \|_{L^2(\Omega)}.$
\end{rem}

\begin{rem}[a posteriori estimates for \eqref{modf}]
 For the right-hand side in \eqref{modf},  the discretisation will change because $\widehat{F}$ changes to 
 \[
\widehat{F}(\widehat{v}):= \int_{\Omega} (G- Q): D_\pw^m \widehat{v}\dx + \int_{\Omega} (g+ {\rm div}^m\: Q) \widehat{v} \dx   \text{ for all } \widehat{v} =v+v_\nc \in \widehat{V} \equiv V+V_\nc.
\]
  It may be seen as an advantage of the modified scheme \eqref{eq:modified_discrete} that $u_\nc$ is not affected by a change of $(G,g)$ to $(G{-} Q, g +{\rm div}^m \: Q )$ for any $Q$. 
The a~posteriori error control involves the terms $\|G- \Pi_0 G\|+ \| h^m_\T g\|$ and this motivates the minimisation 
\begin{align}\label{eq:min}
\min_{Q \in H({\rm div}^m, Q)} \left( \|(1-\Pi_0)(G-Q) \|^m +\kappa_m \|h_\T^m (g+ {\rm div}^m \:  Q) \| \right).
\end{align}
 The modified data $(G',g')=(G{-}Q, g+ {\rm div}^m \: Q)$ for a minimizer $Q$ leads to optimal a~posteriori error control in Theorem~\ref{thm:u-Junc} when the upper bound is simplified to 
 \begin{align}\label{eq:simplify}
 \frac{1}{2} \trinl u-u_\nc \trinr^2_\pw \le \trinl u_\nc- J u_\nc \trinr_\pw^2  +
2 (1+ \Lambda_{\rm J})^2 \left(\|G'- \Pi_0 G'\| + \kappa_m \|h_\T^m g'\| \right)^2.
 \end{align}
The situation is less clear in Theorem~\ref{thm_a:u-Junc} because $u_\nc$ may be affected by a change of $\widehat{F} $ and this may affect $\trinl u_\nc - J u_\nc \trinr_\pw$. However, it is unrealistic to solve the minimisation problem \eqref{eq:min} exactly for general $G$ and $g$. Nevertheless, a preprocessing of the data $G$ and $g$ can reduce the data approximation terms in (guaranteed) upper error bounds. In return, the data selected can be poor as well and ruin the efficiency of the reliable error estimates in Theorems~\ref{thm_a:u-Junc} and \ref{thm:u-Junc}.
\end{rem}

\subsection{Efficiency up to data-oscillations}
The efficiency of the term $\|h_\T^m g \|_{L^2(\Omega)} $ up to data-oscillations is established next.
\begin{thm}[efficiency up to data-oscillations] Let $u$ solve \eqref{eq:weakabstract} with right-hand side \eqref{F:format} for $G,g \in L^2(\Omega)$. Then 
$\displaystyle \|h_\T^m g \|_{L^2(\Omega)} \lesssim \trinl u-I_\nc u \trinr_\pw + \osc_m(g,\T) + \|G-\Pi_0 G \|.$
\end{thm}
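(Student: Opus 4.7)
The plan is a standard Verfürth-style bubble function argument, localized triangle by triangle, with one twist tailored to the nonconforming setting: the bubble $v_T$ will be chosen so that $\int_T D^m v_T\,\dx = 0$, which eliminates the constant (non-oscillatory) part $\Pi_0(D^m u - G)$ and leaves only the ``oscillation'' of $D^m u - G$, which is precisely what can be bounded by $\trinl u - I_\nc u\trinr_\pw$ and $\|G - \Pi_0 G\|$.

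Fix $T \in \T$ and let $b_T$ denote the standard element bubble of degree $3m$, namely $b_T := (\lambda_1\lambda_2\lambda_3)^m$ in barycentric coordinates. Then $b_T \in H^m_0(T)$ (both for $m=1$ and $m=2$) and, in particular, $b_T$ together with its derivatives up to order $m-1$ vanishes on $\partial T$ and $b_T$ vanishes at every vertex. Set $v_T := b_T \cdot (\Pi_m g)|_T$, extended by zero to $\Omega$; then $v_T \in V = H^m_0(\Omega)$ and $v_T(a)=0$ for every $a \in \V(\Omega)$. Since $b_T$ and its derivatives up to order $m-1$ vanish on $\partial T$, integration by parts $m$ times yields the crucial identity
\[
\int_T D^m v_T \,\dx \;=\; 0.
\]

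Testing the weak equation \eqref{eq:weakabstract} with $v_T$, the point-force contributions in \eqref{F:format} drop (since $v_T$ vanishes at vertices) and one obtains
\[
\int_T g\,v_T \,\dx \;=\; \int_T (D^m u - G) : D^m v_T \,\dx.
\]
Using $\int_T D^m v_T\,\dx = 0$, the right-hand side equals $\int_T (1-\Pi_0)(D^m u - G) : D^m v_T\,\dx$. By \eqref{eq:L2ortho} one has $\Pi_0 D^m u = D^m_\pw I_\nc u$, hence on $T$
\[
(1-\Pi_0)(D^m u - G)\big|_T \;=\; D^m(u - I_\nc u)\big|_T \;-\; (G - \Pi_0 G)\big|_T .
\]
A Cauchy inequality followed by the inverse estimate $\|D^m v_T\|_{L^2(T)} \lesssim h_T^{-m}\|v_T\|_{L^2(T)} \lesssim h_T^{-m}\|\Pi_m g\|_{L^2(T)}$ (which uses shape-regularity and $\|b_T\|_\infty \le 1$) yields
\[
\left|\int_T (D^m u - G): D^m v_T\,\dx\right| \;\lesssim\; h_T^{-m}\Bigl(\|D^m(u-I_\nc u)\|_{L^2(T)} + \|G-\Pi_0 G\|_{L^2(T)}\Bigr)\|\Pi_m g\|_{L^2(T)}.
\]

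On the other hand, the norm equivalence $\|p\|_{L^2(T)}^2 \lesssim \int_T b_T\, p^2\,\dx$ on the finite-dimensional space $P_m(T)$ (Verfürth's trick on the reference triangle plus scaling) produces
\[
\|\Pi_m g\|_{L^2(T)}^2 \;\lesssim\; \int_T (\Pi_m g)\,v_T\,\dx \;=\; \int_T g\,v_T\,\dx \;-\; \int_T (g-\Pi_m g)\,v_T\,\dx .
\]
Inserting the above bounds and a Cauchy inequality for the oscillation term, dividing by $\|\Pi_m g\|_{L^2(T)}$, and multiplying by $h_T^m$ gives
\[
h_T^m \|\Pi_m g\|_{L^2(T)} \;\lesssim\; \|D^m(u-I_\nc u)\|_{L^2(T)} + \|G-\Pi_0 G\|_{L^2(T)} + h_T^m\|g-\Pi_m g\|_{L^2(T)}.
\]
Squaring, summing over $T \in \T$, and using the triangle inequality $\|h_\T^m g\| \le \|h_\T^m \Pi_m g\| + \osc_m(g,\T)$ concludes the proof.

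The main technical point is the choice of bubble with enough boundary regularity ($H^m_0(T)$) and vanishing vertex values so that both the point-force contributions and $\Pi_0(D^m u - G)$ disappear; once that is secured, everything else is routine norm-equivalence plus scaling.
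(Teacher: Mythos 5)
Your proof is correct and follows essentially the same path as the paper: the element bubble $b_T^m$ multiplied by $\Pi_m g$, the integration-by-parts observation $\int_T D^m v_T\,\dx=0$, the identity $(1-\Pi_0)(D^m u-G)=D^m(u-I_\nc u)-(G-\Pi_0 G)$ from \eqref{eq:L2ortho}, and the inverse/norm-equivalence estimates. The only differences are cosmetic (you absorb the $h_T^m$ scaling at the end rather than into $v_T$, and drop the normalization constant $27$), so the argument matches the paper's.
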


\begin{proof} 
The efficiency holds in a local form for each triangle $T \in \T$. Let $\varphi_z \in S^1(\T)$ be the $P_1$ nodal basis function associated to a vertex $z \in \V(T) =\{a,b,c\}$ of the triangle $T$, i.e., $\varphi_z|_T$ is a barycentric coordinate of $z$ in $T$ and let $b_T:= 27 \varphi_a \varphi_b \varphi_c \in P_3(T) \cap H^1_0(T)$ be the cubic-bubble function. Then $b_T^m \in H^m_0(T) \subset V$ satisfies $0 \le b_T^m \le 1$ and inverse estimates $\trinl b_T^m \trinr \lesssim h^{-m}_T \|b_T\|$. Abbreviate $g_m:= \Pi_m g|_T \in P_m(T) $ and observe
\begin{align} \label{eq:gm}
\|h_T^m g\|_{L^2(T)} \le \|h_T^m g_m\|_{L^2(T)}  + \osc_m(g,T )
\end{align}
from the definition of $\osc_m(g,T)$ and a triangle inequality.  An inverse estimate $\| g_m\|_{L^2(T)} \le \constant{} \label{gg}$
$\|b_T^{m/2} g_m \|_{L^2(T)}$ follows from the equivalence of the norms 
$\|b_T\bullet\|_{L^2(T)} \approx \|\bullet \|_{L^2(T)}$  in $P_m(T)$. (A scaling argument  reveals that the equivalence constant does not depend on the size or shape of the triangles and exclusively depends on $m$.) The inverse estimate shows
\begin{align}\label{eq:invest}
\constant[\ref{gg}]^{-2} h_T^{2m} \|g_m\|_{L^2(T)}^2 & \le h_T^{2m} \|b_T^{m/2} g_m\|^2_{L^2(T)}=h_T^{2m} \int_T b_T^m g_m(g_m-g) \dx+ 
h_T^{2m} \int_T b_T^m g_m g \dx \nonumber \\
& \le \osc_m(g,\T) h_T^m \|b_T^{m} g_m \|_{L^2(T)} + h_T^m  \int_T g v_T  \dx
\end{align}
with a Cauchy inequality and the abbreviation $v_T:= h_T^m b_T^m g_m \in P_{4m}(T) \cap H^2_0(T) \subset V$ in the last step. 
Since $a(u,v_T)=\int_Tg v_T \dx +\int_T G: D^m v_T \dx $ (from \eqref{eq:weakabstract} for the test function $v_T\in V$), the crucial term in  \eqref{eq:invest} is equal to 
\begin{equation} \label{eq:crucial}
h_T^m \int_T g v_T \dx =h_T^m \int_T(D^2 u - G) : D^m v_T  \dx.
\end{equation}
The bubble-function methodology due to \cite{Verfurth13}  observes at this point that $\Pi_0 D^m v_T=0$ from an integration by parts with $v_T \in H^m_0(T)$, 
whence $D^m v_T=0$ on $\partial T$, and so $ \int_T D^m v_T \dx = \int_{\partial T} (D^{m-1} v_T) \nu_T \ds =0.$
Consequently, 
\begin{align*}
\int_T (D^m u-G): D^m v_T \dx & =\int_T D^m v_T: (1-\Pi_0) (D^m u -G)\dx \\
& \le ({ |u-I_\nc u|_{H^2(T)} } + \|G - \Pi_0 G \|_{L^2(T)}) \|D^m v_T\|_{L^2(T)}.
\end{align*}
 A standard inverse estimate $\|D^m v_T\|_{L^2(T)} \lesssim h_T^{-m}\|v_T\|_{L^2(T)}$ for the polynomial $v_T \in P_{4m}(T)$ depends on $m$ and on the shape-regularity  of the triangle $T$. Note that $h_T^{-m} \|v_T\|_{L^2(T)}=\|b_T^m g_m\|_{L^2(T)} \le \|g_m\|_{L^2(T)} $ follows from the definition of $v_T= h_T^{m}b_T^m g_m$ and $0 \le b_T \le 1$. The aforementioned estimates lead in \eqref{eq:crucial} to 
\[ h_T^m \int_T g v_T \dx \le \constant{}\label{ss} (|u-I_\nc u|_{H^2(T)}  + \|G - \Pi_0 G \|_{L^2(T)}) h_T^m \|g_m\|_{L^2(T)} \]
for some generic constant  $\constant[\ref{ss}] \approx 1$ that depends on the shape of $T$ and $m$.  The combination of the previous estimate with \eqref{eq:invest} proves
\[ h_T^{2m} \|g_m\|^2_{L^2(T)} \le \constant[\ref{gg}] h_T^m \|g_m\|_{L^2(T)} 
(\osc_m(g,T) + \constant[\ref{ss}]  (|u-I_\nc u|_{H^2(T)}  + \|G - \Pi_0 G \|_{L^2(T)})),  \]
whence
\( h_T^{m} \|g_m\|_{L^2(T)} \lesssim |u-I_\nc u|_{H^2(T)}+\|G - \Pi_0 G \|_{L^2(T)} +\osc_m(g,T). \)
This and \eqref{eq:gm} lead to the asserted local efficiency 
\( \|h_T^m g\|^2_{L^2(T)} \lesssim  |u-I_\nc u|^2_{H^2(T)}+\|G - \Pi_0 G \|^2_{L^2(T)} +\osc_m(g,T)^2\)
for each $T \in \T$. The sum over all $T \in \T$ concludes the proof.
\end{proof}

\subsection{Example with dominating data oscillations}
The data-oscillation term $\|G-\Pi_0 G\|$ dominates the error zero and makes the error estimates useless in the following alarming example.
Suppose $m=2$ (the situation is simpler for $m=1$) and choose some function $z_{\rm c} \in S^1(\T; {\mathbb R}^2)$. 
Given $z_{\rm c}$, consider any $z \in H^1(\Omega; {\mathbb R}^2)$ with 
$\int_E z \ds=\int_E z_{\rm c} \ds$ for all edges $E \in \E$. 
(This can be done, e.g, by volume bubble functions added to $z_{\rm c}$ or other functions that do not change the above edge integrals.)
Let $g \equiv 0$ and $G:=  {\rm sym \; Curl} \:z \in L^2(\Omega; {\mathbb S})$ in \eqref{F:format}. 
Since {$\Pi_0 \: {\rm Curl } \: z= {\rm Curl }_\pw \: I_\CR z = {\rm Curl } \: z_{\rm c},$} it follows that 
$\int_{\Omega} G : D^2 v \dx =0 =\int_{\Omega} G : D_\pw^2 v_\M \dx $ for all $v \in V$ and $v_\M \in \M(\T)$.  
The first identity follows immediately from an integration by parts and the second from a piecewise integration by parts; for instance,  any $v_\M \in \M(\T)$ satisfies
\begin{align*}
\int_{\Omega} G : D_\pw^2 v_\M \dx &= \int_{\Omega} {\rm sym \;  Curl} \:z :  D^2_\pw v_\M \dx
= \int_{\Omega} {\rm Curl} \:z : D^2_\pw v_\M \dx  \\
&= \int_{\Omega} {\rm Curl} \:z_{\rm c} : D^2_\pw v_\M \dx
 = \sum_{E \in \E} \int_E 
 \nu_E \cdot \langle {\rm Curl} \:z_{\rm c} \rangle_E [\nabla_\pw v_\M]_E \ds \\
& =\sum_{E \in \E} \left( \int_E  [\nabla_\pw v_\M]_E  \ds \right) \cdot \langle {\rm Curl} \:z_{\rm c} \rangle_E \nu_E \ds =0
\end{align*}
(because $[{\rm Curl} \:z_{\rm c}]_E \times \nu_E =0$ and $ \langle {\rm Curl} \:z_{\rm c} \rangle_E \nu_E =0$ on $E$
while $\int_E [\nabla_\pw v_\M]_E \ds =0$). This latter orthogonality  property implies $u=0=u_\M$ for the scheme \eqref{eq:discrete} as well as for \eqref{eq:modified_discrete}. 
On the other hand, 
$\|G- \Pi_0 G\|= \|{\rm sym \;  Curl} \;(z -z_{\rm c})\|$. If $\|G-\Pi_0 G\|_{L^2(T)} =0$ for $T \in \T$, then $w=(w_1,w_2):=z-z_{\rm c} \in H^1(T;{\mathbb R}^2)$ satisfies 
$\displaystyle  {\rm sym \;  Curl} \: w =  0$ 
a.e. in  $T$. This is equivalent to $w_{1,2}=0=w_{2,1}$ and $w_{1,1}=w_{2,2}$. 
Hence $(w_2, w_1)$ is divergence-free and there exists some $\beta \in H^2(T)$ with 
$(w_2, w_1) = {\rm \nabla}\beta = (\partial \beta/\partial x_1, \partial \beta/\partial x_2).$ Consequently, $\beta$ satisfies $ \partial^2 \beta/\partial x_j^2 =0$ for $j=1,2$.  Since $\beta$ is harmonic, it is smooth in $T$. Without loss of generality, assume $0 \in {\rm int} \: (T)$. For $x=(x_1,x_2) \in T$, an integration along the line
 ${\rm conv} \{0,{x_1}\} \times \{x_2\}$ 
 shows $\beta(x)=\beta(0,x_2)+\int_0^{ x_1} \beta_{,1}(\xi,x_2) \: {\rm d}\xi$ (with the abbreviation $\beta_{,j}:=\partial \beta/\partial x_j$, $\beta_{,ij}:=\partial^2 \beta/\partial x_i \partial x_j$, $j=1,2$
for the partial derivatives). Since $\beta_{,11}=0$, $\beta_{,1}(\xi,x_2)=\beta_{,1}(0, x_2)$ for a.e. $\xi \in {\rm conv} \{0,x_1\} $. Hence $\beta(x)=\beta(0,x_2) +x_1 \beta_{,1}(0,x_2)$. The same argument applies for an integration along $\{x_1\} \times {\rm conv} \{0,{x_2}\} $ and leads to $\beta(x)=\beta(x_1,0) +x_2 \beta_{,2}(x_1,0)$. Recall that $\beta$ is smooth in  ${\rm int} \: (T)$ and consider derivatives of the two identities to verify $\beta_{,12}(x)= \beta_{,12}(0,x_2)=\beta_{,12}(x_1,0)$  for all $x \in {\rm int} \: (T)$. The latter identity shows first for $(0,x_2) \in T$ that $\beta_{,12}(0,x_2)=\beta_{,12}(0,0)$ and second for all $x \in {\rm int} \: (T)$ that $\beta_{,12}(x)=\beta_{,12}(0,0)$. Consequently, the Hessian $D^2 \beta$ of $\beta$ is constant as $\beta \in P_2(T)$. Recall that $\int_E w \ds =0 =\int_E \nabla \beta \ds$ for all edges $E \in \E(T)$ of $T$. Therefore the affine vector field $\nabla \beta \in P_1(T; {\mathbb R}^2)$
vanishes at all the edges, midpoints and so everywhere in $T$. Consequently, $\beta$ is a constant and $w:=(w_1, w_2)= (\beta_{,2},\beta_{,1}) =0$ vanishes a.e. in $T$. This shows that any choice of $z \in H^1(\Omega; {\mathbb R}^2)$ with $\|z-z_{\rm c}\|_{L^2(\Omega)}>0$ leads to $\|G- \Pi_0 G\|>0$.
In other words, the error vanishes while the data approximation does not.

\section*{Acknowledgements}
The research of the first author has been supported by the Deutsche Forschungsgemeinschaft in the Priority Program 1748 under the project "foundation and application of generalized mixed FEM towards nonlinear problems
in solid mechanics" (CA 151/22-2).  The finalization of this paper has been supported by   SPARC project 
(id 235) entitled {\it the mathematics and computation of plates}.

\bibliographystyle{amsplain}
\bibliography{ref_combined}
\end{document}